\newcounter{RomanNumber}
\newcommand{\MyRoman}[1]{\setcounter{RomanNumber}{#1}\Roman{RomanNumber}}
\gdef\pampmatrix{%
  \begingroup
  \let&=\amsamp
  \begin{pmatrix}%
}
\gdef\endpampmatrix{\end{pmatrix}\endgroup}
\newtheorem{theorem}{Theorem}[section] 
\newtheorem{lemma}[theorem]{Lemma}     
\newtheorem{corollary}[theorem]{Corollary}
\newtheorem{proposition}[theorem]{Proposition}
\title[Cancellation and homotopy rigidity]
 {Cancellation and homotopy rigidity of classical functors} 
\author{Ruizhi Huang and Jie Wu}
\begin{document}
\maketitle

\begin{abstract}
We first show that simply connected co-$H$-spaces and connected $H$-spaces can be uniquely decomposed into prime factors in the homotopy category of pointed $p$-local spaces of finite type. This is used to develop a $p$-local version of Gray's correspondence between homotopy types of prime co-$H$-spaces and homotopy types of prime $H$-spaces, and the split fibration which connects them as well. Further, we use the unique decomposition theorem to study the homotopy rigidity problem for classic functors. Among others, we prove that $\Sigma \Omega$ and $\Omega$ are homotopy rigid on simply connected $p$-local co-$H$-spaces of finite type, and $\Omega\Sigma $ and $\Sigma$ are homotopy rigid on connected $p$-local $H$-spaces of finite type.  
\end{abstract}

\section{Introduction}
\noindent Cancellation and non-cancellation phenomena are widely investigated both in algebra and geometry. For instance, the Krull-Schmidt-Remak-Azumaya theorem \cite{Facchini} claims that any decomposition of an $R$-module into a direct sum of indecomposable modules is unique if the endomorphism rings of the summands are local rings. Comparing to algebraic cancellation, the corresponding problem in geometry is more mysterious which was illustrated by the classic example of Hilton and Roitberg \cite{Hilton}. In particular, they constructed an $H$-manifold $E_{7\omega}$ which is the total space of a $S^3$-bundle over $S^7$ classified by $7\omega$ with $\omega \in \pi_6(S^3)\cong \mathbb{Z}/12$ as the generator, and they proved that 
\[
Sp(2)\times S^3 \cong_{{\rm diff}} E_{7\omega}\times S^3, ~{\rm but}~ Sp(2) \not\simeq E_{7\omega}.
\]
However, we know that $Sp(2) \simeq_p E_{7\omega}$, i.e., they are locally homotopy equivalent at any prime $p$. This special but crucial example shed light on possible Krull-Schmidt type theorems in $p$-local homotopy theory, and indeed in 1975 Wilkerson \cite{Wilk} proved that each simply connected $p$-local finite $H$-space can be uniquely decomposed into a product of $H^\ast$-prime factors (also see Section $2$). The finite condition was essential there and was eliminated by Gray much later at the expense of considering $p$-complete spaces instead of $p$-local spaces. In that setting, Gray \cite{Gray} proved a Krull-Schmidt type theorem which states that each $p$-complete $H$-space can be uniquely decomposed to atomic pieces up to order and homotopy. Our first result in this paper is to drop the finite condition without other cost.
\begin{theorem}[(Theorem \ref{finitetype})]
$(1)$ Any simply connected $p$-local co-$H$-space of finite type can be uniquely decomposed into a wedge of irreducible factors up to order and homotopy equivalence;

$(2)$ Any connected $p$-local $H$-space of finite type can be uniquely decomposed into a weak product of irreducible factors up to order and homotopy equivalence.
\end{theorem}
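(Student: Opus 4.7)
The plan is to establish both parts of the theorem via a Krull--Schmidt style argument for finite-type $p$-local spaces, extending Wilkerson's theorem for finite $p$-local $H$-spaces (and its co-$H$ analogue) by passing to inverse limits over natural filtrations. I would handle (1) and (2) in parallel: co-$H$-spaces via the skeletal filtration with wedge decompositions, and $H$-spaces dually via Postnikov sections with weak product decompositions.

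For existence in (1), given a simply connected $p$-local co-$H$-space $X$ of finite type, I would argue by induction on the skeleta $X^{(n)}$. Each $X^{(n)}$ is a finite $p$-local co-$H$-space and so admits a finite wedge splitting into irreducible summands via an idempotent argument in the finite monoid $[X^{(n)}, X^{(n)}]$. The skeletal inclusions are compatible with the co-$H$ structure, so one can arrange successive splittings coherently and pass to the colimit to obtain a decomposition of $X$ into irreducible wedge factors. Part (2) is dual, using Postnikov sections of a connected $p$-local $H$-space of finite type, with the weak product taking the place of the wedge and fiber-splittings replacing cofiber-splittings.

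For uniqueness, the key point is that for an irreducible finite-type $p$-local co-$H$-space $A$, the endomorphism monoid satisfies $[A,A] \cong \varprojlim_n [A, A^{(n)}]$, with the $\lim^1$ obstruction vanishing by a Mittag--Leffler argument in finite type. Each $[A, A^{(n)}]$ is a finite $p$-local monoid in which irreducibility forces every non-equivalence to be nilpotent; lifting idempotents through the tower then transfers this dichotomy to $[A,A]$. Given two decompositions $X \simeq \bigvee_i A_i \simeq \bigvee_j B_j$, a composite $A_i \hookrightarrow X \to B_j$ chosen to be nontrivial in the lowest dimension where $A_i$ is nonzero admits a partial inverse whose self-composition on $A_i$ is not nilpotent and hence is a homotopy equivalence; this matches factors pairwise, with a transfinite induction handling infinite indexing sets. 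The $H$-space statement follows by the same mechanism after replacing skeleta by Postnikov sections.

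The main obstacle will be the inverse-limit analysis of $[A,A]$ and the coherence of the skeletal splittings: one must verify the Mittag--Leffler condition on the tower $\{[A, A^{(n)}]\}$ and ensure that idempotents chosen at stage $n$ can be refined so as to lift to stage $n+1$, both of which rely crucially on $p$-local finite type. Once that profinite-type structure is in hand, the Krull--Schmidt matching proceeds along the same lines as Wilkerson's finite case and Gray's $p$-complete case, applied separately to the wedge and weak-product settings.
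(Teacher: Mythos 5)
There is a genuine gap, and it sits exactly at the point the paper identifies as the hard one. Your uniqueness step rests on the claim that for an irreducible factor $A$ the $\lim^1$ obstruction in the tower vanishes ``by a Mittag--Leffler argument in finite type.'' Finite type does \emph{not} kill phantom phenomena: Gray (1966) and Harper--Roitberg constructed finite-type spaces $K_1,K_2$ with $P^m(K_1)\simeq P^m(K_2)$ for all $m$ but $K_1\not\simeq K_2$, and the paper cites precisely these examples, inside the proof of Lemma \ref{prime=irred}, as the difficulty its argument must circumvent. (This is why Gray's own Krull--Schmidt theorem works $p$-complete rather than $p$-local.) The paper's substitute never computes $[A,A]$ as an inverse limit. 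Instead it proves that irreducible implies prime: it decomposes each Postnikov stage $P^m(X)$ by Wilkerson's finite theorem (Theorem \ref{finite}), organizes the stage-wise irreducible factors into a forest, uses the monotone counts $N_{m,n}$ to isolate for each $n$ a single irreducible factor $Z$ of $P^m(X)$ rooted below stage $n$, and then iterates so as to produce \emph{compatible retraction data} ($Z_i$ a retract of $P^{n_i}(Z_{i+1})$ and of $P^{n_i}(Y)$, with ${\rm holim}_k Z_k\simeq X$) whose limit exhibits $X$ as a retract of $Y$; uniqueness then follows by quoting Wilkerson's Theorem $1.7$ exchange argument with primeness in hand. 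It is this coherence of retractions through the tower, not a $\lim^1$ vanishing, that defeats phantoms. There are also technical slips feeding the gap: for skeleta your tower $\{[A,A^{(n)}]\}$ has the wrong variance (there are no maps $A^{(n+1)}\to A^{(n)}$; Milnor's sequence concerns $[A,Y]\cong\varprojlim\,[A^{(n)},Y]$ up to $\lim^1$, which is exactly where phantom maps live), $[A,A^{(n)}]$ is not a monoid, and $[X^{(n)},X^{(n)}]$ is not finite even $p$-locally (e.g.\ $[S^n,S^n]\cong\mathbb{Z}_{(p)}$), so the ``finite monoid'' idempotent splitting does not apply as stated.

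A second, related gap: you assert that at each finite stage ``irreducibility forces every non-equivalence to be nilpotent.'' Irreducibility of $A$ gives no stage-wise control: the paper explicitly warns that $N_{m,n}=1$ does not mean the corresponding Postnikov stage is irreducible, and its two-stage example $E$ shows how stages of an irreducible space decompose. The usable dichotomy is Wilkerson's $H^\ast$-prime property (every self-map is an equivalence or weakly nilpotent on mod-$p$ cohomology), which Wilkerson establishes for finite or homotopy-finite spaces; extending it to finite type is not done in the paper either --- the authors deliberately route around it by proving irreducible $=$ prime (Lemma \ref{prime=irred}) and letting primeness drive the exchange argument in Theorem \ref{finitetype}. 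So your transfinite matching scheme is fine in outline, but the dichotomy and the $\lim^1$ claim it depends on are unproven, and the second is false at the stated level of generality; as written, the proposal assumes away the paper's main technical content.
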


In particular, the loop space of an irreducible co-$H$-space and the suspension of an irreducible $H$-space can be homotopically decomposed to irreducible pieces. As was pointed out by Gray \cite{Gray}, the least connected factors in the decompositions are of special interest, and this observation allows us to develop a $p$-local version of Gray's correspondence.
\begin{theorem}[(Theorem \ref{correspondence})]
There is $1$-$1$ correspondence in the sense of Gray between the homotopy types of connected $p$-local irreducible $H$-spaces $X$ of finite type and the homotopy types of simply connected $p$-local irreducible co-$H$-spaces $Y$ of finite type.
\end{theorem}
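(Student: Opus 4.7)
The plan is to pass through the loop--suspension adjunction and use Theorem~\ref{finitetype} to single out a distinguished ``least connected'' irreducible summand on each side, following the pattern of Gray's original correspondence but now in the $p$-local finite-type setting.

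Given a simply connected irreducible $p$-local co-$H$-space $Y$ of finite type, I first loop to obtain the connected $H$-space $\Omega Y$ of finite type, and apply Theorem~\ref{finitetype}(2) to decompose it as a weak product of irreducible $H$-spaces. If $Y$ is $(n-1)$-connected, then $\Omega Y$ is $(n-2)$-connected with bottom reduced homology in dimension $n-1$ isomorphic to $H_n(Y;\mathbb{Z}_{(p)})$. I want to show that exactly one irreducible $H$-factor $X$ carries this bottom homology, the other factors being strictly more connected, and define the correspondence by $Y\mapsto X$. Symmetrically, given a connected irreducible $p$-local $H$-space $X$ of finite type, I suspend and apply Theorem~\ref{finitetype}(1) to decompose $\Sigma X$ into irreducible co-$H$ wedge summands, and let $Y$ be the unique least connected summand, defining $X\mapsto Y$.

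The heart of the proof is verifying that these assignments are mutually inverse. For the composite $Y\mapsto X\mapsto Y'$, I plan to exploit the James-type splitting $\Sigma\Omega\Sigma Z\simeq\bigvee_{k\geq 1}\Sigma Z^{\wedge k}$ together with the Berstein--Hilton fact that a simply connected co-$H$-space $Y$ is a homotopy retract of $\Sigma\Omega Y$. Since $X$ splits off $\Omega Y$ as an $H$-factor, $\Sigma X$ splits off $\Sigma\Omega Y$ as a co-$H$-factor; comparing bottom cells forces $Y$ itself to split off $\Sigma X$, and the Krull--Schmidt uniqueness of Theorem~\ref{finitetype}(1) then identifies $Y$ as the least connected irreducible co-$H$-summand of $\Sigma X$, so $Y'\simeq Y$. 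The reverse composite $X\mapsto Y\mapsto X'$ proceeds symmetrically using the adjunction unit $X\to\Omega\Sigma X$ as an $H$-map and the corresponding $H$-space decomposition of $\Omega\Sigma X$, together with Theorem~\ref{finitetype}(2) to pin $X$ down uniquely.

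The main obstacle I anticipate is ensuring that the ``least connected factor'' is genuinely unique up to homotopy rather than merely unique up to connectivity level, since several irreducible factors could a priori share the same connectivity. I expect to resolve this by combining a careful bottom-cell analysis in $\mathbb{Z}_{(p)}$-homology with the Krull--Schmidt-style uniqueness supplied by Theorem~\ref{finitetype}: once the bottom homology of $\Omega Y$ (respectively $\Sigma X$) is shown to be concentrated in a single irreducible factor, that factor is forced to be unique up to $p$-local homotopy equivalence. A secondary technical point is to check that all the retractions involved respect the $H$- and co-$H$-structures in the homotopy category, so that the decomposition theorem is genuinely applicable.
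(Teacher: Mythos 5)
Your overall architecture coincides with the paper's: decompose $\Omega Y$ and $\Sigma X$ by Theorem \ref{finitetype}, send each space to an irreducible factor of least connectivity, and show the two composites return the original space using a splitting of $\Sigma\Omega Y$ (resp.\ of $\Omega\Sigma X$) together with primeness (Lemma \ref{prime=irred} and Lemma \ref{prime}) and a connectivity count. But there is a genuine gap in the tool you invoke for the composite $Y\mapsto X\mapsto Y'$: the James splitting $\Sigma\Omega\Sigma Z\simeq\bigvee_{k\geq 1}\Sigma Z^{\wedge k}$ applies only when $Y=\Sigma Z$ is a suspension, and an irreducible simply connected $p$-local co-$H$-space of finite type need not be a suspension (the paper itself treats $Y=\Sigma Z$ only as a special extra case in Theorem \ref{pGray}). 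The Berstein--Hilton retraction $Y\to\Sigma\Omega Y$ that you add does not repair this: it exhibits $Y$ as a retract of $\Sigma\Omega Y$, but by itself gives no control on the connectivity of the complementary summand, which is exactly what your ``bottom cell'' comparison needs. The required input is that $\tilde{H}_\ast(\Omega Y)$ is a tensor algebra on $\Sigma^{-1}\tilde{H}_\ast(Y)$ for an arbitrary co-$H$-space $Y$, packaged in the paper as Theorem \ref{suspension} (Grbi{\'c}--Theriault--Wu): $\Sigma\Omega Y\simeq Y\vee\bigvee_{n\geq 2}[\Sigma\Omega Y]_n$ with $\tilde{H}_\ast([\Sigma\Omega Y]_n)\cong\Sigma^{1-n}\tilde{H}_\ast(Y)^{\otimes n}$, so the complement of $Y$ is roughly $2|Y|$-connected and any prime summand $Y'$ of $\Sigma X$, being a retract of $\Sigma\Omega Y\simeq\Sigma X\vee\Sigma R\vee\Sigma(X\wedge R)$ on one hand and of $Y\vee(\mbox{highly connected wedge})$ on the other, must retract off $Y$, whence $Y'\simeq Y$. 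The same issue recurs silently in your reverse composite: decomposing $\Omega\Sigma X\simeq\Omega(Y\vee Q)$ requires the generalized Hilton--Milnor theorem for wedges of co-$H$-spaces (Theorem \ref{Hilton}), not the classical version for wedges of suspensions.

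A secondary flaw is your plan to resolve uniqueness by showing the bottom homology of $\Omega Y$ (resp.\ $\Sigma X$) is ``concentrated in a single irreducible factor'': this is neither needed nor true in general, since nothing prevents several irreducible factors from sharing the minimal connectivity a priori, and the bottom homology may have rank greater than one. In the paper's argument uniqueness is a consequence rather than an input: one takes an \emph{arbitrary} least-connected irreducible factor $X'$ of $\Omega Y$ (resp.\ summand $Y'$ of $\Sigma X$), shows via the splittings above and Lemma \ref{prime} that it is a retract of $X$ (resp.\ of $Y$), and irreducibility of $X$ (resp.\ $Y$) forces $X'\simeq X$ (resp.\ $Y'\simeq Y$); in particular all least-connected factors are equivalent, which is exactly the well-definedness you were worried about. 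With the James splitting replaced by Theorem \ref{suspension} and classical Hilton--Milnor by Theorem \ref{Hilton}, your outline does match the paper's proof, but as written these two substitutions are missing and the argument does not go through for co-$H$-spaces that are not suspensions.
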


As in \cite{Gray}, we also call such a pair $(Y, X)$ a corresponding pair. Furthermore, there exists a fibration
$$X\stackrel{i}{\rightarrow} W\rightarrow Y$$
for some co-$H$-space $W$ and $i$ is null-homotopic. To state precisely, we have the following theorem.
\begin{theorem}[(Theorem \ref{pGray})]
Given a corresponding pair $(Y, X)$, there exists a homotopy equivalence
$$\Omega Y\simeq X\times \Omega W,$$
where $W$ is homotopy retract of $\Sigma(X\wedge X)$ and hence a co-$H$-space. Furthermore, $W\simeq \bigvee W_\alpha$ with each  $W_\alpha$ irreducible and

$1)$~$W_\alpha$ is homotopy retract of $[\Sigma\Omega Y]_n$ for some $n\geq 2$ (see Theorem \ref{ksuspension} for the definition of $[\Sigma\Omega Y]_n$);

$2)$~$\Sigma^{n-1}W_\alpha$ is a homotopy retract of $Y^{\wedge n}$;

$3)$~If $Y=\Sigma Z$, $W_\alpha$ is homotopy retract of $\Sigma Z^{\wedge n}$.
\end{theorem}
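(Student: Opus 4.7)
The plan is to leverage the corresponding pair $(Y,X)$ together with the unique decomposition statements of Theorem \ref{finitetype}. By Theorem \ref{correspondence}, $X$ is an H-space retract of $\Omega Y$ via some $s: X \to \Omega Y$, so Theorem \ref{finitetype}(2) produces a splitting $\Omega Y \simeq X \times X_0$ for a uniquely determined connected $p$-local H-space $X_0$ of finite type. The goal is to recognize $X_0$ as a loop space on a co-H-space $W$ with the claimed retract properties.

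To construct $W$, I would work with the adjoint $\bar{s}: \Sigma X \to Y$ of $s$ together with the James-type filtration $[\Sigma\Omega Y]_n$ of $\Sigma\Omega Y$. The other half of the correspondence identifies $Y$ as a co-H-retract of $\Sigma X$, so $\Omega Y$ is an H-retract of $\Omega \Sigma X$, and under the James splitting $\Sigma\Omega\Sigma X \simeq \bigvee_{n\geq 1}\Sigma X^{\wedge n}$ the suspension $\Sigma\Omega Y$ decomposes compatibly. The first James stage contributes $X$ itself, so the complementary H-factor $X_0$ is captured by the higher James stages; I would isolate a retract $W$ of $\Sigma(X\wedge X)$ sitting at the second stage such that $\Omega W \simeq X_0$, giving the desired equivalence $\Omega Y \simeq X \times \Omega W$. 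Since $W$ retracts off a suspension, it is automatically a co-H-space of finite type, and Theorem \ref{finitetype}(1) then supplies the essentially unique decomposition $W \simeq \bigvee_\alpha W_\alpha$ into irreducible co-H-spaces.

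Properties $(1)$--$(3)$ follow from this construction. Each $W_\alpha$ retracts off $W$, hence off the filtration stage $[\Sigma\Omega Y]_n$ in which $W$ is located, giving $(1)$. For $(2)$, a standard adjunction argument between the James filtration and smash powers shows that a retract of $[\Sigma\Omega Y]_n$ suspends $(n-1)$ times to a retract of $Y^{\wedge n}$. When $Y = \Sigma Z$, the classical James splitting $\Sigma\Omega\Sigma Z \simeq \bigvee_n \Sigma Z^{\wedge n}$ directly refines $(1)$ to exhibit $W_\alpha$ as a retract of $\Sigma Z^{\wedge n}$, which is $(3)$.

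The main obstacle I anticipate is the second step: producing $W$ as a retract of $\Sigma(X\wedge X)$ and simultaneously identifying its loop space with the complementary H-factor $X_0$. This requires a careful compatibility between the James/Ganea-type filtration of $\Sigma\Omega Y$ and the H-space splitting $\Omega Y \simeq X \times X_0$ coming from Theorem \ref{finitetype}(2); in particular, one must verify that the second James stage genuinely accounts for the complementary factor, using the minimality of $X$ among irreducible H-factors of $\Omega Y$ built into the definition of a corresponding pair. Once this identification is in place, the uniqueness assertions in Theorem \ref{finitetype} ensure that the resulting $W$ and its wedge decomposition are well-defined up to homotopy.
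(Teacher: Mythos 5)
Your proposal has a genuine gap at the precise point you yourself flag: the construction of $W$ and the identification of the complementary factor of $\Omega Y$ with $\Omega W$. Unique decomposition (Theorem \ref{finitetype}) does give a splitting $\Omega Y\simeq X\times X_0$, but nothing in your argument shows that $X_0$ is a loop space at all, let alone the loop space of a retract of $\Sigma(X\wedge X)$: saying you would ``isolate a retract $W$ of $\Sigma(X\wedge X)$ sitting at the second stage such that $\Omega W\simeq X_0$'' restates the conclusion rather than proving it. The James-type splittings you invoke only give information about $\Sigma\Omega Y$, i.e.\ stable, suspension-level data, and such data cannot produce the unstable delooping $X_0\simeq\Omega W$. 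The paper's mechanism is fibration-theoretic, not decomposition-theoretic: following Gray, one pulls back the Hopf fibration over $\Sigma X$ along the structure maps $f\colon Y\to\Sigma X$ and $g'\colon\Sigma X\to Y$ of the corresponding pair, obtaining a fibration $X\stackrel{\iota'}{\to}W\stackrel{\pi}{\to}Y$ with $W$ a retract of the intermediate pullback $T$ over $\Sigma X$. Gray's Proposition A1 identifies $T$ through the action map $X\times X\to\Omega\Sigma X\times X\to X$, whose first component is $h\circ\Omega g'\circ E\simeq h\circ g\simeq \mathrm{id}$ by the corresponding-pair identities; Proposition A2 then yields $T\simeq\Sigma(X\wedge X)$ and that the fiber inclusion is null homotopic, and this null homotopy of $\iota'$ is exactly what splits the fibration sequence into $\Omega Y\simeq X\times\Omega W$. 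The essential input $h\circ g\simeq\mathrm{id}$, i.e.\ the structure maps of the corresponding pair, is never exploited in your sketch, and without it there is no route from the product splitting of $\Omega Y$ to the fibration $X\to W\to Y$ that the theorem encodes.

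Two smaller inaccuracies in (1)--(2). In (1) you place all of $W$ in a single stage $[\Sigma\Omega Y]_n$; in general $W$ spreads over infinitely many stages, and it is each irreducible $W_\alpha$ separately that lands in one stage: since $X$ retracts off $\Omega Y$, the space $\Sigma(X\wedge X)$ retracts off $\Sigma(\Omega Y)^{\wedge 2}\simeq\bigvee_{n=2}^{\infty}\bigvee^{\rho(n,2)}[\Sigma\Omega Y]_n$ (Theorem \ref{ksuspension}), and one then needs primality of $W_\alpha$ (Lemmas \ref{prime=irred} and \ref{prime}) together with the finite type hypothesis to trap $W_\alpha$ in a single summand of the infinite wedge. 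For (2), the ``standard adjunction argument'' you invoke is in fact the computation that the composite $\Sigma^{n-1}[\Sigma\Omega Y]_n\to(\Sigma\Omega Y)^{\wedge n}\stackrel{\mathrm{ev}^{\wedge n}}{\longrightarrow}Y^{\wedge n}$ is a homology isomorphism, using Theorem \ref{ksuspension} a second time to decompose $(\Sigma\Omega Y)^{\wedge n}$; this gives the stronger statement $\Sigma^{n-1}[\Sigma\Omega Y]_n\simeq Y^{\wedge n}$, from which (2) and, via $[\Sigma\Omega\Sigma Z]_n\simeq\Sigma Z^{\wedge n}$, also (3) follow. These last two points are repairable along the paper's lines, but the delooping step in your second paragraph is the core of the theorem and is missing.
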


As an application of the unique decomposition theorem, the second part of the paper is devoted to the homotopy rigidity problem which was originally raised by Victor Buchstaber, and it was studied in \cite{Grbic} by Grbi\'{c} and the second author. Generally, it can be formulated in the following definition.
\begin{definition}[(\cite{Grbic}, Definiton $1$)]
Let $F:\mathcal{C}\rightarrow \mathcal{T}op$ be a homotopy functor from a subcategory of $\mathcal{T}op$. Then $F$ is called \textit{homotopy rigid} on $\mathcal{C}$ if for any $X$, $Y\in \mathcal{C}$,
$$F(X)\simeq F(Y) \iff X\simeq Y.$$
\end{definition}

It was also proved in \cite{Grbic} that $\Sigma \Omega$ and $\Omega$ are homotopy rigid on simply connected $p$-local finite co-$H$-spaces. One of our aims here is to drop the finite condition for the rigidity of $\Sigma\Omega$. Thanks to the unique decomposition theorem in the setting of finite type, we can not only generalize the result of \cite{Grbic} for co-$H$-spaces of finite type, but also can prove the rigidity property in the dual case.
\begin{theorem}[(Theorem \ref{coHfinitetype} and Theorem \ref{Hfinitetype})]\label{rigid}
$1)$~Let $Y_1$ and $Y_2$ be simply connected $p$-local co-$H$-spaces of finite type. If $\Sigma \Omega Y_1\simeq \Sigma \Omega Y_2$ (or $\Omega Y_1\simeq \Omega Y_2$), then $Y_1\simeq Y_2$;

$2)$~Let $X_1$ and $X_2$ be connected $p$-local $H$-spaces of finite type. If $\Omega\Sigma  X_1\simeq \Omega \Sigma X_2$ (or $\Sigma  X_1\simeq \Sigma  X_2$), then $X_1\simeq X_2$.
\end{theorem}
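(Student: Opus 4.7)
The plan is to leverage the unique decomposition theorem (Theorem~\ref{finitetype}) together with Gray's correspondence (Theorem~\ref{correspondence}) and the split fibration formula (Theorem~\ref{pGray}). First, note the easy reductions: $\Omega Y_1\simeq\Omega Y_2$ implies $\Sigma\Omega Y_1\simeq\Sigma\Omega Y_2$ by functoriality, and $\Sigma X_1\simeq\Sigma X_2$ implies $\Omega\Sigma X_1\simeq\Omega\Sigma X_2$. So it suffices to treat the hypothesis $\Sigma\Omega Y_1\simeq\Sigma\Omega Y_2$ in~(1) and $\Omega\Sigma X_1\simeq\Omega\Sigma X_2$ in~(2); the $\Omega$ and $\Sigma$ cases then follow immediately.

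For~(1), first decompose each $Y_i\simeq\bigvee_\alpha Y_{i,\alpha}$ into irreducible co-$H$-factors via Theorem~\ref{finitetype}(1). For each irreducible $Y_{i,\alpha}$ with corresponding $H$-space $X_{i,\alpha}$ under Gray, Theorem~\ref{pGray} gives $\Omega Y_{i,\alpha}\simeq X_{i,\alpha}\times \Omega W_{i,\alpha}$, and the irreducible co-$H$-summands of $W_{i,\alpha}$ all have connectivity strictly greater than $Y_{i,\alpha}$ itself, since $W_{i,\alpha}$ is a retract of $\Sigma(X_{i,\alpha}\wedge X_{i,\alpha})$. Combining these via a Hilton-Milnor-type decomposition of $\Omega(\bigvee_\alpha Y_{i,\alpha})$ produces a weak product decomposition of $\Omega Y_i$ into irreducible $H$-spaces, in which every cross-term is more highly connected than any primitive factor $X_{i,\alpha}$. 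Suspending and applying $\Sigma(A\times B)\simeq \Sigma A\vee \Sigma B\vee \Sigma(A\wedge B)$ iteratively converts this into an explicit wedge decomposition of $\Sigma\Omega Y_i$ into irreducible co-$H$-spaces. By the uniqueness part of Theorem~\ref{finitetype}(1), the multisets of irreducibles agree on the two sides of $\Sigma\Omega Y_1\simeq\Sigma\Omega Y_2$. I then argue by induction on connectivity: the least connected irreducible co-$H$-summand on each side must arise from a least connected $Y_{i,\alpha}$ (all cross terms being strictly more highly connected), and Gray's correspondence (Theorem~\ref{correspondence}) recovers this $Y_{i,\alpha}$ unambiguously on both sides. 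Split it off and iterate; the finite-type hypothesis ensures the induction closes (in the standard transfinite sense).

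For~(2), the argument is dual. Decompose $X_i\simeq\prod'_\beta X_{i,\beta}$ via Theorem~\ref{finitetype}(2) with corresponding irreducible co-$H$-spaces $Y_{i,\beta}$ under Gray; apply $\Sigma(A\times B)\simeq\Sigma A\vee\Sigma B\vee\Sigma(A\wedge B)$ iteratively to decompose $\Sigma X_i$ as a wedge of suspended smash products; then invoke a Hilton-Milnor-type decomposition of $\Omega\Sigma X_i$ as a weak product of irreducible $H$-spaces. The uniqueness part of Theorem~\ref{finitetype}(2) together with Gray's correspondence matches the least connected irreducible $H$-summand on each side with $X_{i,\beta_0}$ for the least connected factor, and induction on connectivity completes the proof.

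I expect the main obstacle to be the careful control of the Hilton-Milnor-type splitting in the setting where the wedge summands are general simply connected $p$-local co-$H$-spaces of finite type rather than suspensions, and the verification that all cross terms that arise (both from this splitting and from $\Sigma(A\times B)\simeq \Sigma A\vee \Sigma B\vee \Sigma(A\wedge B)$, as well as the $\Omega W_{i,\alpha}$ pieces coming from Theorem~\ref{pGray}) have strictly higher connectivity than the primitive factors, so that the connectivity-based inductive matching via Gray's correspondence is unambiguous. The finite-type hypothesis is crucial here, both to make sense of the weak product decompositions and to guarantee termination of the induction.
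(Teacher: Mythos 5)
Your reductions ($\Omega Y_1\simeq\Omega Y_2\Rightarrow\Sigma\Omega Y_1\simeq\Sigma\Omega Y_2$, $\Sigma X_1\simeq\Sigma X_2\Rightarrow\Omega\Sigma X_1\simeq\Omega\Sigma X_2$) and your identification of the least connected irreducible summand are sound, and your route is genuinely different from the paper's: the paper never uses Gray's correspondence (Theorem \ref{correspondence}) or the splitting $\Omega Y\simeq X\times\Omega W$ (Theorem \ref{pGray}) in the rigidity proofs. Instead it works directly with the functor $\mathcal{M}(Y)\simeq\bigvee_{n\geq 2}[\Sigma\Omega Y]_n$ coming from the suspension splitting (Theorems \ref{suspension} and \ref{ksuspension}), so the detour through the corresponding $H$-spaces $X_{i,\alpha}$ is unnecessary for part (1); moreover, since $\Sigma X_{i,\alpha}$ is generally reducible, your ``explicit wedge decomposition into irreducible co-$H$-spaces'' anyway falls back on Theorem \ref{finitetype}, and the Hilton--Milnor theorem you invoke (Theorem \ref{Hilton}) is only stated for finite wedges, whereas a finite-type $Y$ may have infinitely many irreducible factors --- one reason the paper splits off a single irreducible at a time rather than decomposing $Y$ completely at the outset.

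The genuine gap is at ``split it off and iterate.'' The least-connected matching works only at the first step: once you pass the bottom connectivity level, the irreducible summands of $\Sigma\Omega Y_i$ at a given level $c$ mix factors of $Y_i$ itself with summands of cross terms, and these cannot be cancelled against each other until you show that every non-$Y_i$ summand of connectivity $\leq c$ is the value of a functor applied only to the already-matched factors of connectivity $<c$ (using, e.g., that $\mathcal{M}$ applied to the high part has connectivity at least twice that part's, and that cross terms exceed the sum of the levels involved), together with finiteness of the number of factors at each level so that multiset subtraction is legitimate. This bookkeeping is exactly the content of the paper's ``reduced'' and ``splittable'' multi-functors $\mathcal{M}_n((Z_i)_{i=1}^n,-)$, built from Theorem \ref{Hilton}, and of its double induction on connectivity and on the number of same-connectivity components --- it constitutes the bulk of the proofs of Theorems \ref{coHfinitetype} and \ref{A} (the latter proved for any good functorial retract $A$ of $\Omega\Sigma$, with Theorem \ref{Hfinitetype} the special case $A=\Omega\Sigma$), and your sketch asserts rather than supplies it. Relatedly, your claim that ``every cross-term is more highly connected than any primitive factor'' is false when the factors have widely different connectivities: a cross term of two low factors can lie below a highly connected primitive factor $X_{i,\gamma}$; only the relative statement (cross terms involving a given factor exceed that factor's level) holds, and that is what the corrected induction actually requires.
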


In particular, the second part of the above theorem confirms the Conjecture $1$ raised in \cite{Grbic}. Further, for the rigidity property on $H$-spaces, we can prove a more general result. Recall that in \cite{Selick} and \cite{Selick1}, Selick and the second author have showed that given any functorial coalgebra retract $A(V)$ of the tensor Hopf algebra $T(V)$, there exists a geometric realization $A(X)$ of $A(V)$ such that $A(X)$ is a functorial homotopy retract of $\Omega\Sigma X$ with the property that $E^{0}H_\ast(A(X)) \cong A(V)$ where $V=\tilde{H}_\ast(X)$. We may call such a homotopy retract a good one if $V$ is totally contained in $A(V)$ (see Definition \ref{goodretract} for precise definition). Under this condition, we can prove the following:
\begin{theorem}[(Theorem \ref{A})]
Let $X_1$ and $X_2$ be connected $p$-local $H$-spaces of finite type. Then if $A(X)\simeq A(Y)$, we have $X\simeq Y$.
\end{theorem}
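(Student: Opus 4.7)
The plan is to show that the \emph{good} condition forces $X$ to be a canonical homotopy retract of $A(X)$, and then to combine this with the unique decomposition theorem for connected $p$-local $H$-spaces of finite type (Theorem~\ref{finitetype}(2)) to cancel the auxiliary factors and conclude $X\simeq Y$. The argument is modeled on the proof of the rigidity of $\Omega\Sigma$ in Theorem~\ref{Hfinitetype}, with the natural inclusion $X\hookrightarrow\Omega\Sigma X$ replaced by its factorisation through $A(X)$ provided by the good condition.

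To realise $X$ as a homotopy retract of $A(X)$, let $\iota_X:A(X)\hookrightarrow\Omega\Sigma X$ and $\rho_X:\Omega\Sigma X\to A(X)$ denote the natural inclusion and retraction with $\rho_X\circ\iota_X\simeq 1_{A(X)}$, and let $e_X:X\to\Omega\Sigma X$ be the James inclusion. Since $X$ is a connected $H$-space of finite type, the identity $1_X$ extends along $e_X$ to the canonical $H$-map $\mu_X:\Omega\Sigma X\to X$ with $\mu_X\circ e_X\simeq 1_X$. Set $s=\rho_X\circ e_X:X\to A(X)$ and $t=\mu_X\circ\iota_X:A(X)\to X$. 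To check $t\circ s\simeq 1_X$, I would compute on mod-$p$ reduced homology with $V=\tilde H_\ast(X;\mathbb F_p)$: the composite factors as
\[
V\xrightarrow{(e_X)_\ast}T(V)\xrightarrow{(\rho_X)_\ast}A(V)\xrightarrow{(\iota_X)_\ast}T(V)\xrightarrow{(\mu_X)_\ast}V,
\]
where $(e_X)_\ast$ is the inclusion in degree one, $(\rho_X)_\ast|_V$ is the inclusion $V\hookrightarrow A(V)$ (exactly where the good condition $V\subseteq A(V)$ enters), $(\iota_X)_\ast$ is the inclusion of the coalgebra retract, and $(\mu_X)_\ast$ is the algebra map extending $1_V$, hence restricts to $1_V$ on $V\subset T(V)$. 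Thus $(t\circ s)_\ast=1_V$, and the $p$-local Whitehead theorem for connected nilpotent spaces of finite type gives $t\circ s\simeq 1_X$, so $X$ is a homotopy retract of $A(X)$.

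The same construction gives $Y$ as a retract of $A(Y)$. Using the $H$-space structure to split the retraction, one obtains
\[
A(X)\simeq X\times B_X,\qquad A(Y)\simeq Y\times B_Y,
\]
for connected $p$-local $H$-spaces $B_X$, $B_Y$ of finite type. Fix irreducible decompositions $X\simeq\prod_\alpha X_\alpha$, $Y\simeq\prod_\beta Y_\beta$, and similarly for $B_X$, $B_Y$, via Theorem~\ref{finitetype}(2). Then the assumed equivalence $h:A(X)\simeq A(Y)$ gives two irreducible decompositions of the same $H$-space, so by uniqueness the corresponding multisets of irreducible factors agree. The main obstacle is the cancellation step: a priori an irreducible factor of $X$ could be swapped with one of $B_Y$ under $h$. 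I would rule this out by showing that $B_X$ is itself a functorial invariant of $X$, geometrically realising the complementary coalgebra retract $A(V)/V$ of $T(V)$ (which by the good condition contains none of $V$); functoriality would then force the equivalence $h$ to induce a compatible equivalence $B_X\simeq B_Y$, and applying uniqueness of irreducible decomposition to the cancelled factor yields $\{X_\alpha\}=\{Y_\beta\}$ as multisets, hence $X\simeq Y$. Verifying the functorial splitting and the compatibility of $h$ with it is the technically delicate step of the argument.
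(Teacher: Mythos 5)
The first half of your argument is essentially sound and is consistent with the paper's setup: the good condition $A_1(V)\cong V$ does make $X$ a homotopy retract of $A(X)$ (your homology computation shows $t\circ s$ induces the identity on $\tilde H_\ast(X;\mathbb{Z}/p)$, hence is an equivalence, which suffices; the paper packages the same fact as $\Omega\Sigma A(X)\simeq X\times \mathcal{J}_A(X)$ with $|\mathcal{J}_A(X)|>|X|$, via Theorem \ref{suspension}). The genuine gap is in your cancellation step. The hypothesis only gives an \emph{arbitrary} homotopy equivalence $h\colon A(X)\simeq A(Y)$, not one of the form $A(f)$ for a map $f\colon X\to Y$; functoriality of a splitting $A(X)\simeq X\times B_X$ places no constraint on such an $h$, so ``functoriality forces $h$ to induce a compatible equivalence $B_X\simeq B_Y$'' is a non sequitur --- indeed, if $h$ respected the splitting you would have $X\simeq Y$ immediately, and this is precisely the difficulty the whole theorem is about. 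There is also an algebraic problem with your proposed invariant: ``$A(V)/V$'' is not a functorial coalgebra retract of $T(V)$, and there is no realization theorem for it to quote; in the good decomposition the genuine complement is the sub-Hopf algebra $B(V)\cong T(Q(V))$, which the paper realizes not inside $A(X)$ but as $\Omega Q(X)$ with $\Omega\Sigma X\simeq A(X)\times\Omega Q(X)$.

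Without $B_X\simeq B_Y$ in hand, the only remaining tool is primeness plus connectivity (Lemmas \ref{prime=irred} and \ref{prime}), and that cancels only the irreducible factors of $X$ of connectivity below $|B_Y|$: the bottom factor $Z_1$ does land in $Y$ since $|Z_1|=|Y|<|B_Y|$, but after splitting it off your complements $B_X,B_Y$ stay fixed while the connectivity of the remaining factors of $X$ grows, and as soon as it reaches $|B_Y|$ Lemma \ref{prime} no longer decides which factor absorbs them, so the iteration stalls. The paper's proof repairs exactly this defect dynamically: it passes to $\Omega\Sigma A(X)\simeq X\times\mathcal{J}_A(X)$ and, after each cancellation, \emph{re-expands} using the functorial, $\Sigma_n$-equivariant decompositions $A(X_1\times X_2)\simeq A(X_1)\times A(X_2)\times\mathcal{A}(X_1,X_2)$ and $\mathcal{J}_A(X_1\times X_2)\simeq\mathcal{J}_A(X_1)\times\mathcal{J}_A(X_2)\times\mathcal{J}_A(X_1,X_2)$, whose multi-functors are \emph{reduced} (vanish when an entry is trivial) and \emph{splittable}; the matched pieces $\mathcal{J}_A(Z_1)$ cancel on both sides and the reduced multi-functors regenerate a connectivity gap above the new bottom at every stage, which is what sustains the induction (the dual of the argument of Theorem \ref{coHfinitetype}, with uniqueness from Theorem \ref{finitetype} applied at each step). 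To salvage your proof you would have to replace the static factors $B_X,B_Y$ by such a family of reduced, splittable multi-functors; the static version cannot close the induction.
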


A natural question is whether the dual of the above theorem is true or not, which may be also served as a potential generalization of the part $1)$ of Theorem \ref{rigid}. To make the question more explicit, we have to use a more general functorial decomposition of loops on co-$H$-spaces introduced in \cite{Selick2}. For the earlier good functorial coalgebra retract $A(V)$ of $T(V)$, there exists a geometric realization $\bar{A}(Y)$ which is a functorial homotopy retract of $\Omega Y$ for any simply connected $p$-local co-$H$-space $Y$ of finite type. Then we may formulate the following natural question:

\begin{question}
Is the functor $\bar{A}$ homotopy rigid on simply connected $p$-local co-$H$-spaces of finite type?
\end{question}

There is also a parallel question of rigidity for the integral case, which should be much more difficult due to the existence of Mislin genus. For a given nilpotent space $X$, the \textit{Mislin genus} $G(X)$ is defined to be the set of all the homotopy types of nilpotent spaces $Y$ such that $Y\simeq_p X$ at every prime $p$. By the work of Hilton and Roitberg, we know that $G(Sp(2))=\{Sp(2), E_{7\omega}\}$. In \cite{Grbic}, Grbi\'{c} and the second author proved the integral rigidity of $\Sigma\Omega$ for finite co-$H$-spaces with finite homology by using a result of McGibbon concerning Mislin genus. However, since Mislin genus in general is rather mysterious, it should be very hard to prove integral rigidity by combining the local results. It will be interesting if one can find other ways to detect the rigidity problem in the integral case.

We also discuss some other classic functors in the appendix including the free loop functor $L$, where by definition $L(X)={\rm map}(S^1, X)$. There is a canonical fibration
$$\Omega X \rightarrow L(X) \rightarrow X,$$
from which we see $L(X)$ and $\Omega X$ are closely related. Hence, it is reasonable to expect some rigidity results for $L$. 
\begin{question}
Can we choose a suitable and meaningful category and condition to prove the rigidity of $L$?
\end{question}
Besides above, there is another interesting question based on the observation that both the results and their proofs in this paper are rather dual to each other. Further, the functors $\Omega$ and $\Sigma$ are adjoint to each other and the operations $\times$ and $\vee$ refer to the product and co-product respectively in categorical sense. Of course, the operation $\wedge$ will play a special and important role and exists in one of the two categories here. Hence, it may be reasonable to consider the homotopy rigidity problem in a pure categorical setting. If this is possible, it may be expectable to apply this categorical analogue to other families of adjoint functors. At least, one can exploit the relationship of rigidity properties of adjoint functors under some further assumptions. Let us illustrate this point in more detials. Start with some model category $(\mathcal{U}, \simeq_w)$ and its full subcategories $\mathcal{C}$ and $\mathcal{D}$ which are closed under taking countable limit and colimit respectively. Suppose $\mathcal{D}=(\mathcal{D},\otimes )$ is a distributive monoidal category and we have a pair of adjoint homotopy functors
\[
   \begin{tikzcd}
   \mathcal{C}\ar[r, shift left ]{r}{F} & \mathcal{D}\ar[l, shift left ]{l}{G}.
   \end{tikzcd}
\]
Further, $F$ sends any morphism in $\mathcal{C}$ to cofibration, and $G$ sends any morphism in $\mathcal{D}$ to fibration. 
\begin{question}
$1)$ Under some suitable conditions, does the homotopy rigidity property of $F$ determine that of $G$, or vice versa?

$2)$ Can we make some reasonable assumptions such that $F$ and $G$ are homotopy rigid on their respective categories?
\end{question}

The paper is organized as follows. In section $2$, we will prove the unique decomposition theorem for finite type case. As a direct application, we will develop the $p$-local version of Gray's correspondence in section $3$. Section $4$ and $5$ are devoted to prove the rigidity property of $\Sigma\Omega$ and $A$ respectively. At last section, we will also discuss the rigidity problem for some other canonical homotopy functors including the free loop functor as an appendix. We also remark that throughout this paper all spaces under consideration are supposed to be connected and $p$-local, and all (co)homology groups will have $\mathbb{Z}/p$ coefficients unless otherwise stated.

\section{Cancellation and unique decomposition theorem}
\noindent Cancellation problems are usually solved by the proofs of unique decompositions. The philosophy of unique decomposition has been largely applied to many mathematical subjects. For instance, Dedekind domain is known to be the appropriate concept to study the ring of algebraic integers in number fields, the ideals of which are uniquely decomposed into prime ideals. The key point is what is the suitable definition for `primes'. For homotopy theory, Wilkerson in \cite{Wilk} proposes the following concept as a candidate:

\begin{definition}\label{Hprime}
Let $X$ be a connected nilpotent $p$-local space of finite type, we call $X$ \textit{an $H^\ast$-prime space} if for every self map $f: X\rightarrow X$ either: 

$(1)$ $H^\ast(f): H^\ast(X; \mathbb{Z}/p)\rightarrow H^\ast(X; \mathbb{Z}/p)$ is an isomorphism, or 

$(2)$ $H^\ast(f)$ is \textit{weakly nilpotent}, i. e., for each $n>0$, there exists integer $N_n$ such that $(H^\ast(f))^{N_n}(H^m(X; \mathbb{Z}/p))=0$ for $0<m\leq n$.

We notice that (1) here is equivalent to

$(1)^\prime$ $f$ is a homotopy equivalence.
\end{definition}

To perform decomposition, we also need an operation as an analogy to product for numbers which depends on the choice of category we work with.
We consider two classic settings: the category of $H$-spaces with Cartesian product $\times$ as the product and the category of co-$H$-spaces with wedge product $\vee$. The following property of $H^\ast$-prime space justifies its name:

\begin{lemma}[(\cite{Wilk}, Lemma $1.6$)]\label{prime}
Let $X$, $Y$ and $W$ be connected nilpotent $p$-local CW-complexes of finite type. Suppose that $X$ is $H^\ast$-prime, we have

$(1)$ if $X$ is retract of $Y\times W$, then $X$ is either a retract of $Y$ or of $W$ by the canonical compositions,

$(2)$ if $X$ is retract of $Y\vee W$, then $X$ is either a retract of $Y$ or of $W$ by the canonical compositions.
\end{lemma}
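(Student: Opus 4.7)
The plan is to attach to the given retraction two self-maps $f_Y, f_W$ of $X$ coming from the two factors and exploit the $H^\ast$-prime dichotomy of Definition \ref{Hprime} to force one of them to be a homotopy equivalence; then the desired retraction of $X$ off a single factor follows by composing with the inverse equivalence.

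For the wedge case (2), first I would set $f_Y = r \circ \iota_Y \circ q_Y \circ i$ and $f_W = r \circ \iota_W \circ q_W \circ i$, where $\iota_Y, \iota_W$ are the wedge inclusions and $q_Y, q_W$ are the pinch maps. Since $\tilde H^\ast(Y \vee W; \mathbb{Z}/p) = \tilde H^\ast(Y) \oplus \tilde H^\ast(W)$, a direct computation gives $(\iota_Y q_Y)^\ast + (\iota_W q_W)^\ast = \mathrm{id}$ on $H^\ast(Y \vee W)$, so $f_Y^\ast + f_W^\ast = (r i)^\ast = \mathrm{id}$ on $H^\ast(X; \mathbb{Z}/p)$. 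Next, if neither $f_Y$ nor $f_W$ were a homotopy equivalence, the $H^\ast$-prime property would make both $f_Y^\ast$ and $f_W^\ast$ weakly nilpotent; fixing any degree $n$ with $H^n(X; \mathbb{Z}/p) \neq 0$, which is finite dimensional by finite type, weakly nilpotent becomes nilpotent, and we would have two nilpotent endomorphisms of a nonzero vector space summing to the identity, which is impossible since $\mathrm{id} - (\text{nilpotent})$ is invertible. Hence one of them, say $f_Y$, is a homotopy equivalence, and then $q_Y \circ i : X \to Y$ together with $f_Y^{-1} \circ r \circ \iota_Y : Y \to X$ exhibits $X$ as a retract of $Y$.

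For the product case (1), the analogous construction using the projections $p_Y, p_W$ produces $f_Y, f_W$, but now the K\"unneth decomposition of $\tilde H^\ast(Y \times W)$ contains a mixed summand $\tilde H^{>0}(Y) \otimes \tilde H^{>0}(W)$ and the identity $f_Y^\ast + f_W^\ast = \mathrm{id}$ fails in general. The key observation is that in the lowest positive degree $n_0$ with $\tilde H^{n_0}(X) \neq 0$, the mixed part of $r^\ast(\alpha)$ for $\alpha \in \tilde H^{n_0}(X)$ consists of external products $y \otimes w$ with $0 < |y|, |w| < n_0$, so after applying $i^\ast$ it becomes a sum of cup products $(p_Y i)^\ast(y) \cdot (p_W i)^\ast(w)$ lying in $\tilde H^{<n_0}(X) \cdot \tilde H^{<n_0}(X) = 0$. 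Hence $f_Y^\ast + f_W^\ast = \mathrm{id}$ on the nonzero space $H^{n_0}(X; \mathbb{Z}/p)$, and the same nilpotency argument as above forces one of $f_Y^\ast, f_W^\ast$ to be non-weakly-nilpotent, thus an isomorphism on all of $H^\ast(X; \mathbb{Z}/p)$ by the global $H^\ast$-prime dichotomy, and hence a homotopy equivalence on $X$.

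The main obstacle is the product case: the failure of the sum identity in general degrees must be shown not to disrupt the argument. This works precisely because the $H^\ast$-prime dichotomy is global, namely weakly nilpotent on all of $H^\ast$ versus isomorphism on all of $H^\ast$, so it suffices to detect non-nilpotency in a single nonzero degree to rule out the weakly nilpotent alternative and upgrade to a global isomorphism.
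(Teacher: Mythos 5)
The paper does not actually prove this lemma; it is imported verbatim from Wilkerson (\cite{Wilk}, Lemma $1.6$), so the only meaningful comparison is with Wilkerson's original argument, which your proof essentially reconstructs: form the two canonical composite self-maps of $X$, show their induced maps sum to the identity on some nonzero piece of $H^\ast(X;\mathbb{Z}/p)$, and use the $H^\ast$-prime dichotomy to rule out both being weakly nilpotent. Your treatment is correct, including the one genuinely delicate point: in the product case the K\"unneth cross-terms spoil the identity $f_Y^\ast+f_W^\ast=\mathrm{id}$ in general degrees, and you repair this by working in the bottom nonzero degree $n_0$, where the mixed summand pulls back through $i^\ast$ into products of classes in $\tilde H^{0<m<n_0}(X)=0$; you then correctly use that the dichotomy of Definition \ref{Hprime} is global, so failure of nilpotence in the single degree $n_0$ upgrades to an isomorphism on all of $H^\ast$ and hence, by $(1)^\prime$, to a homotopy equivalence, giving the retraction $q_Y\circ i$ (resp.\ $p_Y\circ i$) with left homotopy inverse $f_Y^{-1}\circ r\circ\iota_Y$. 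Two trivial remarks: the degenerate case $\tilde H^\ast(X;\mathbb{Z}/p)=0$ (where no degree $n$ with $H^n\neq 0$ exists) forces $X\simeq\ast$ for a $p$-local nilpotent space of finite type, so it is vacuous; and the definition of weakly nilpotent already supplies a uniform power $N_n$ annihilating $H^m$ for $m\leq n$, so the appeal to finite-dimensionality in each degree is not even needed for the nilpotence step.
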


It is exactly this property which is essential for Wilkerson's cancellation, and we may make the following definition for future use:
\begin{definition}\label{prime2}
$(1)$ Let $X$ be a connected $p$-local $H$-space of finite type, we call $X$ \textit{a prime $H$-space} if whenever $X$ is a homotopy retract of $Y\times W$ where $Y$ and $W$ are connected $p$-local $H$-spaces of finite type, $X$ will be a homotopy retract of $Y$ or $W$.

$(2)$ Let $X$ be a simply connected $p$-local co-$H$-space of finite type, we call $X$ \textit{a prime co-$H$-space} if whenever $X$ is a homotopy retract of $Y\vee W$ where $Y$ and $W$ are simply connected $p$-local co-$H$-spaces of finite type, $X$ will be a homotopy retract of $Y$ or $W$.
\end{definition}

On the other hand, a space X is called \textit{irreducible} if X has no nontrivial homotopy retracts. It is easy to see that an $H$-space (or a co-$H$-space) $X$ is irreducible if and only if it is homotopically indecomposable. The main theorem of \cite{Wilk} provided positive answers for cancellation under some finite conditions:

\begin{theorem}[(\cite{Wilk})]\label{finite}
Let $X$ be a connected nilpotent $p$-local space of finite type, we have

$(1)$ if $X$ is an $H$-space which is finite or only has finitely many nontrivial homotopy groups, then $X\simeq X_1\times X_2\times \cdots
\times X_m$ with each $X_i$ irreducible as $H$-space and the decomposition is unique up to order.

Furthermore, $X$ is an irreducible as $H$-space if and only if $X$ is $H^\ast$-prime,

$(2)$ if $X$ is a finite co-$H$-space, then $X\simeq X_1\vee X_2\vee \cdots
\vee X_n$ with each $X_i$ irreducible as co-$H$-space and the decomposition is unique up to order.

Furthermore, $X$ is an irreducible as co-$H$-space if and only if $X$ is $H^\ast$-prime.
\end{theorem}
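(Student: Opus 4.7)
The plan is to prove the theorem in three stages: (i) establish the equivalence of irreducibility with $H^{\ast}$-primeness under the finiteness hypothesis; (ii) obtain existence of a decomposition into irreducibles by induction on a numerical invariant; (iii) derive uniqueness via a Krull--Schmidt style argument resting on Lemma \ref{prime}.

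For stage (i), the direction \emph{$H^{\ast}$-prime implies irreducible} is a direct check: any nontrivial splitting $X \simeq A \times B$ (respectively $A \vee B$) produces, via projection followed by inclusion, a self-map which is neither a homotopy equivalence nor weakly nilpotent on mod-$p$ cohomology, contradicting Definition \ref{Hprime}. The converse is the crux, and this is where the finiteness hypothesis enters essentially. Given $f : X \to X$ with $X$ irreducible, I would invoke a Fitting-type argument: because $H^{\ast}(X;\mathbb{Z}/p)$ is concentrated in finitely many degrees (finite case) or the Postnikov tower is bounded (finitely-many-homotopy-groups case), the sequence of iterates $f^{k}$ stabilizes and assembles, up to homotopy, into an idempotent $e : X \to X$. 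This idempotent realizes a splitting of $X$ into its eventually invertible summand and its eventually weakly nilpotent summand; irreducibility forces one of them to be contractible, which is exactly the dichotomy in Definition \ref{Hprime}.

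For stage (ii), I would induct on $\dim_{\mathbb{Z}/p} H^{\ast}(X;\mathbb{Z}/p)$ in case (2) and in the finite subcase of (1), and on the number of nontrivial homotopy groups in the other subcase of (1). Either $X$ is already irreducible, or it splits nontrivially as $A \times B$ (or $A \vee B$) with both factors of strictly smaller invariant, so the inductive hypothesis completes the step.

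For stage (iii), assume $X_1 \ast \cdots \ast X_m \simeq Y_1 \ast \cdots \ast Y_n$ with all factors irreducible, hence $H^{\ast}$-prime by stage (i), where $\ast$ denotes $\times$ or $\vee$ as appropriate. Lemma \ref{prime} yields that $X_1$ is a retract of some $Y_{j}$; irreducibility of $Y_{j}$ then forces $X_1 \simeq Y_{j}$, and iterating after cancelling this common factor completes the argument. The principal obstacles I foresee lie in stage (i) and the cancellation step of stage (iii): for (i), upgrading the Fitting-style splitting on cohomology to a genuine idempotent homotopy class at the space level uses the finiteness hypothesis in a serious way; for (iii), cancellation of the common factor $X_1$ requires a further application of $H^{\ast}$-primeness to self-maps of $X_1$ arising from comparing the two decompositions, and again rests essentially on the finiteness. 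This is precisely the limitation that Theorem \ref{finitetype} of the present paper must circumvent in order to extend the result to general finite type spaces.
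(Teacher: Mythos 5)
First, a point of comparison: the paper contains no proof of Theorem \ref{finite} at all --- it is imported verbatim from Wilkerson \cite{Wilk}, and even the uniqueness part of the paper's own Theorem \ref{finitetype} is discharged by citing ``a similar argument as that in the proof of Theorem $1.7$ in \cite{Wilk}''. So your proposal can only be measured against Wilkerson's original argument, and its overall architecture does mirror that argument: the Fitting-style passage from an arbitrary self-map to a homotopy idempotent (realized by a mapping telescope, with the finiteness hypothesis guaranteeing the resulting retract splits off $X$ itself) is essentially how irreducible is shown to imply $H^\ast$-prime, and uniqueness is indeed run through the retract property of Lemma \ref{prime}. Two of your steps, however, fail as written. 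In stage (ii), induction ``on the number of nontrivial homotopy groups'' does not close: for $X=K(\mathbb{Z}/p,n)\times K(\mathbb{Z}/p,n)$ both factors of the evident splitting have the same count as $X$, so your invariant does not strictly decrease. You need an additive invariant, e.g.\ the total number of indecomposable cyclic summands of $\bigoplus_n\pi_n(X)$ (finite, since each $\pi_n$ is a finitely generated $\mathbb{Z}_{(p)}$-module and only finitely many are nonzero), together with the observation that every noncontractible nilpotent $p$-local finite-type factor contributes at least one summand; that observation, which you also use tacitly in stage (i) to see the projection-inclusion self-map is not weakly nilpotent, rests on Nakayama over the local ring $\mathbb{Z}_{(p)}$ to rule out nontrivial factors invisible to mod-$p$ (co)homology.

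The more serious gap is in stage (iii): ``iterating after cancelling this common factor'' assumes exactly what is to be proved. Cancelling a common factor from $X_1\times X_2\times\cdots\times X_m\simeq X_1\times Y_2\times\cdots\times Y_n$ is not a formal operation --- the Hilton--Roitberg example \cite{Hilton} recalled in the introduction shows it can fail once the hypotheses are relaxed, and this theorem \emph{is} a cancellation statement, so invoking cancellation inside its proof is circular. Wilkerson's Theorem $1.7$ does not delete a factor and recurse; it shows directly that every irreducible space occurs with the same multiplicity in the two decompositions, by repeatedly applying Lemma \ref{prime} to the composite retractions arising from comparing them, with the induction organized by connectivity (matching least-connected factors first) and by multiplicity --- the same bookkeeping this paper later rehearses at length in the proof of Theorem \ref{coHfinitetype}. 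Your closing remark that the cancellation ``requires a further application of $H^\ast$-primeness to self-maps of $X_1$'' is the right instinct, but no such argument is actually supplied, so as it stands the uniqueness proof has a hole precisely at its crux. With the induction invariant repaired and a multiplicity-matching argument substituted for literal cancellation, your outline would coincide with the cited proof.
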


We notice that Wilkerson's unique decomposition theorem only holds for homotopically or  homologically finite spaces. Our goal in this section is to generalize his theorem to the finite type case, before which we need some preparation. Given any simply connected space $X$ of finite type, there is a minimal cell complex $\tilde{X}$ such that $\tilde{X}\simeq X$ \cite{Hatcher}. For any such minimal model $\tilde{X}$ of $X$, we may define for any $n$
\[
\tilde{{\rm sk}}^{\tilde{X}}_n(X) ={\rm sk}_n (\tilde{X}).
\]
Then in the second part of the following lemma, we will see that $\tilde{{\rm sk}}^{\tilde{X}}_n(X)$ is independent of the choice of the minimal model, and the homotopy type of $X$ as well. Hence we have a sequence of homotopy functors $\tilde{{\rm sk}}_n$'s defined by $\tilde{{\rm sk}}_n(X)\simeq\tilde{{\rm sk}}^{\tilde{X}}_n(X)$  for any $X$ and $n$, where $\tilde{X}$ is any minimal model of $X$.

\begin{lemma}\label{piece}
Let $X$ and $Y$ be two connected nilpotent $p$-local spaces of finite type, if $X\simeq Y$ we have

$(1)$ $P^m(X)\simeq P^m(Y)$ for any $m$ where $P^m(X)$ and $P^m(Y)$ are the $m$-th stages of the Postnikov systems of $X$ and $Y$ respectively.

$(2)$ $\tilde{{\rm sk}}_n$ is well defined for each $n$ and $\tilde{{\rm sk}}_n(X)\simeq \tilde{{\rm sk}}_n(Y)$ for any $n$ provided further $X$ is simply connected.
\end{lemma}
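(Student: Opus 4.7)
For part (1), my plan is to invoke functoriality of the Postnikov tower. Constructing $P^m(-)$ inductively by killing higher homotopy groups makes it a homotopy functor, so a map $f:X\to Y$ induces $P^m(f):P^m(X)\to P^m(Y)$. If $f$ is a homotopy equivalence, then $P^m(f)$ is an isomorphism on $\pi_i$ for $i\leq m$ and both groups vanish for $i>m$; since $P^m(X)$ and $P^m(Y)$ are nilpotent, Whitehead's theorem for nilpotent spaces turns this into a homotopy equivalence. This is standard and could simply be cited.

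Part (2) is more substantive. The naturality half ($X\simeq Y\Rightarrow \tilde{{\rm sk}}_n(X)\simeq \tilde{{\rm sk}}_n(Y)$) reduces, via choosing minimal models $\tilde X\simeq X\simeq Y\simeq \tilde Y$, to the same argument I use for well-definedness. So the heart of the proof is: given two minimal models $\tilde X$ and $\tilde X'$ of the same $X$, I would pick a homotopy equivalence $\phi:\tilde X\to\tilde X'$ and replace it by a cellular representative. Then $\phi$ restricts to a cellular $\phi_n:{\rm sk}_n(\tilde X)\to{\rm sk}_n(\tilde X')$, and my plan is to prove $\phi_n$ is a homotopy equivalence using the mod-$p$ Whitehead theorem for simply connected $p$-local spaces. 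Thus it suffices to verify that $\phi_n$ is an isomorphism on $H_\ast(-;\mathbb{Z}/p)$.

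Here minimality carries the argument. For a minimal $p$-local CW complex the integral cellular differential vanishes after reduction mod $p$, so $H_i({\rm sk}_n\tilde X;\mathbb{Z}/p)\cong C_i(\tilde X)\otimes \mathbb{Z}/p$ for $i\leq n$ and is zero above; the same holds for $\tilde X'$. Since $\phi$ is a cellular homotopy equivalence, the induced chain map $C_\ast(\phi)$ is a quasi-isomorphism; mod $p$ both source and target carry zero differential, forcing $C_\ast(\phi)\otimes\mathbb{Z}/p$ to be a degreewise isomorphism. Truncating at degree $n$ identifies $\phi_n$ with this isomorphism on mod-$p$ cellular chains, hence on mod-$p$ homology, giving the desired $p$-local equivalence.

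The main obstacle I expect is the bookkeeping at the cellular-approximation step: one has to check that $\phi$ can be cellularised without enlarging the cell complexes (so that the restriction to ${\rm sk}_n$ genuinely lands in ${\rm sk}_n(\tilde X')$) and that the restricted chain map inherits the minimality hypothesis needed above. Once this is clean, the remainder is formal, being essentially the $p$-local Whitehead theorem applied degree by degree to a chain map between minimal complexes.
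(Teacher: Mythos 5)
Your proposal is correct, and its overall scaffolding matches the paper: part (1) by citing functoriality of the Postnikov system (the paper points to IX.2 of Whitehead's book), and part (2) by reducing both well-definedness and homotopy invariance to comparing $n$-skeleta of minimal models via a cellularized homotopy equivalence plus a Whitehead-type theorem. However, the key step --- showing the restricted map on skeleta is a homology isomorphism --- is carried out by a genuinely different mechanism. The paper works with $\mathbb{Z}_{(p)}$-coefficients in the single critical dimension $n$ (lower dimensions being automatic): it uses the surjection $\tilde H_n({\rm sk}_n\tilde X;\mathbb{Z}_{(p)})\twoheadrightarrow \tilde H_n(\tilde X;\mathbb{Z}_{(p)})$, writes the lift $\tilde q$ of the isomorphism $q$ in a basis adapted to the free/torsion splitting, observes the resulting matrix is block triangular $\bigl(\begin{smallmatrix} A & 0 \\ B & C\end{smallmatrix}\bigr)$ with both blocks invertible over the local ring $\mathbb{Z}_{(p)}$, and concludes $\tilde q$ is an isomorphism. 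You instead argue mod $p$ at the chain level in all dimensions simultaneously: minimality kills the mod-$p$ cellular differential, so a cellular homotopy equivalence induces degreewise isomorphisms on $C_\ast\otimes\mathbb{Z}/p$, truncation gives the mod-$p$ homology isomorphism on skeleta, and the mod-$p$ Whitehead theorem finishes. Your route avoids the matrix bookkeeping and is arguably cleaner, but it leans on two facts you should record explicitly: first, that a minimal complex has exactly $\dim_{\mathbb{Z}/p}H_i(X;\mathbb{Z}/p)$ cells in dimension $i$, which is precisely what forces the vanishing mod-$p$ differential (this is where minimality genuinely enters; it is immediate from Hatcher's construction or a Smith-normal-form count over the PID $\mathbb{Z}_{(p)}$); second, the upgrade from a mod-$p$ homology isomorphism to a homotopy equivalence, which holds here because the skeleta are simply connected $p$-local spaces of finite type (a finitely generated $p$-divisible $\mathbb{Z}_{(p)}$-module vanishes by Nakayama), whereas the paper can invoke the $\mathbb{Z}_{(p)}$-Whitehead theorem directly. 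Finally, the ``main obstacle'' you flag is not one: cellular approximation produces a cellular map on the given complexes without enlarging them, so the restriction to $n$-skeleta is automatic --- this is exactly the step the paper takes as well.
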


\begin{proof}
$(1)$ immediately follows from the construction of functorial Postnikov system as indicated in $\MyRoman{9}. 2$ of \cite{White}, for instance.

$(2)$ Since $X$ and $Y$ are simply connected, we may first suppose $X$ and $Y$ are minimal cell complexes and consider the usual skeleton. Choose a homotopy equivalence $f: X\rightarrow Y$, then we have the restriction map $f: {\rm sk}_n(X)\rightarrow {\rm sk}_n(Y)$ and the following commutative diagram of homology:
\[
  \begin{tikzcd}
    \tilde{H}_n({\rm sk}_n(X); \mathbb{Z}_{(p)})\ar[two heads]{r}{i_\ast} \ar{d}{f_\ast} & \tilde{H}_n(X; \mathbb{Z}_{(p)})\ar{d}{f_\ast}[swap]{\cong}\\
    \tilde{H}_n({\rm sk}_n(Y); \mathbb{Z}_{(p)}) \ar[two heads]{r}{i_\ast} & \tilde{H}_n(Y; \mathbb{Z}_{(p)}).
  \end{tikzcd}
\]
After applying the fundamental decomposition for finitely generated abelian groups the diagram becomes
\[
   \begin{tikzcd}
   \underbrace{\mathbb{Z}_{(p)}\oplus \mathbb{Z}_{(p)}\cdots \oplus \mathbb{Z}_{(p)}}_{m+s} \ar[two heads]{r}{\pi} \ar{d}{\tilde{q}} &
    \underbrace{\mathbb{Z}_{(p)}\oplus \mathbb{Z}_{(p)}\cdots \oplus \mathbb{Z}_{(p)}}_{m}\oplus\underbrace{\mathbb{Z}/p^{r_1}\oplus \mathbb{Z}/p^{r_2}\cdots \oplus \mathbb{Z}/p^{r_s}}_{s}\ar{d}{q}[swap]{\cong}\\
    \underbrace{\mathbb{Z}_{(p)}\oplus \mathbb{Z}_{(p)}\cdots \oplus\mathbb{Z}_{(p)}}_{m+s} \ar[two heads]{r}{\pi}  &
    \underbrace{\mathbb{Z}_{(p)}\oplus \mathbb{Z}_{(p)}\cdots \oplus \mathbb{Z}_{(p)}}_{m}\oplus\underbrace{\mathbb{Z}/p^{r_1}\oplus \mathbb{Z}/p^{r_2}\cdots \oplus \mathbb{Z}/p^{r_s}}_{s}.
   \end{tikzcd}
\]
Then $\tilde{q}$ can be expressed as a matrix lifted from that of $q$ after fixing a basis. Since $q$ is an isomorphism and a scaler multiplication by a number prime to $p$ is invertible in the $p$-local ring, then the matrix is invertible and of the form
\[
  \begin{pmatrix}
  A & 0\\
  B & C
  \end{pmatrix}
\]
where the block partition corresponds to that of torsion free part and torsion part. Hence $\tilde{q}$ is an isomorphism and then $f_\ast$ is an isomorphism.  Since $f$ clearly induces isomorphisms of homology in other dimensions, we see that $f$ is a homotopy equivalence on the $n$-th skeleton.

Return to the lemma, if $\tilde{X}_1$ and $\tilde{X}_2$ are two minimal models of $X$, then $\tilde{X}_1\simeq \tilde{X}_2$. Hence according to the previous discussion, we have
\[
\tilde{{\rm sk}}^{\tilde{X}_1}_n(X) ={\rm sk}_n (\tilde{X}_1)\simeq {\rm sk}_n (\tilde{X}_2)=\tilde{{\rm sk}}^{\tilde{X}_2}_n(X),
\]
which means $\tilde{{\rm sk}}^{\tilde{X}_1}_n(X)$ is independent of the choice of the minimal model. Further, it is easy to see two homotopy equivalent spaces have the same set of minimal models. Hence $\tilde{{\rm sk}}_n$ is a homotopy functor, and the lemma follows.
\end{proof}

\begin{lemma}\label{prime=irred}
$(1)$ A connected $p$-local $H$-space of finite type is prime if and only if it is irreducible.

$(2)$ A simply connected $p$-local co-$H$-space of finite type is prime if and only if it is irreducible.
\end{lemma}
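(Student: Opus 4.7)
I treat the two halves in parallel; the arguments are dual, with $\times$ replaced by $\vee$ and the K\"unneth formula replaced by the wedge splitting of reduced cohomology in the co-$H$-space case.

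For the forward direction (prime implies irreducible), assume $X$ is prime and suppose, for contradiction, $X \simeq A \times B$ (respectively $A \vee B$) with both factors nontrivial. Then $X$ is a homotopy retract of $A \times B$, so primeness forces $X$ to be a retract of one factor, say $A$; the projection simultaneously exhibits $A$ as a retract of $X$. Using $\mathbb{F}_p$-cohomology together with the finite-type hypothesis, K\"unneth (respectively the splitting $\tilde{H}^\ast(A \vee B) = \tilde{H}^\ast(A) \oplus \tilde{H}^\ast(B)$) gives $\dim_{\mathbb{F}_p} \tilde{H}^n(A \times B) = \dim_{\mathbb{F}_p} \tilde{H}^n(A)$ in every degree, forcing $\tilde{H}^\ast(B;\mathbb{F}_p)=0$. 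Since $B$ is a connected nilpotent (abelian $\pi_1$ for $H$-spaces; simply connected for co-$H$-spaces) $p$-local space of finite type, $B$ is contractible, contradicting the nontriviality of the decomposition.

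For the converse, the plan is to show that irreducibility implies the $H^\ast$-prime condition of Definition \ref{Hprime}, so that Lemma \ref{prime} completes the argument. Given a self-map $f \colon X \to X$ that is not a homotopy equivalence, I need to show $f^\ast$ is weakly nilpotent. Use the functors $P^m$ (for $H$-spaces) and $\tilde{{\rm sk}}_n$ (for co-$H$-spaces) from Lemma \ref{piece}, which send $X$ to a homotopically or homologically finite object carrying the induced self-map; Wilkerson's Theorem \ref{finite} then applies to this truncation, and via the unique decomposition into irreducible factors it yields, at each finite stage, the dichotomy that the induced map is either a homotopy equivalence or cohomologically nilpotent in each degree. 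The irreducibility of $X$ itself is then invoked to rule out intermediate behaviour in the limit: a nonzero Fitting-invariant summand of $f^\ast$ in some degree, through the uniqueness of the Wilkerson decomposition of the truncations together with the fact that the truncation functors preserve products (respectively wedges), would yield a nontrivial direct factor of $X$. Hence $f^\ast$ has trivial invariant part in every degree, i.e.\ is weakly nilpotent, as required.

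\textbf{Main obstacle.} The principal difficulty is the transfer step across the truncation: a decomposition of $P^m(X)$ (respectively $\tilde{{\rm sk}}_n(X)$) does not automatically descend to a decomposition of $X$, so irreducibility of $X$ must be exploited indirectly via compatibility of the Wilkerson decompositions of successive truncations and the uniqueness clause of Theorem \ref{finite}. A secondary subtlety is that $\tilde{{\rm sk}}_n(X)$ need not inherit a co-$H$-structure; this is circumvented by working at the level of the cohomology of the truncation under the action of the iterates of $f^\ast$, since weak nilpotency is itself a cohomological condition and does not require a geometric co-$H$-structure on the truncation.
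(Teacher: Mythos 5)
Your forward direction (prime implies irreducible) is fine and agrees with the paper, which disposes of it in one line. The converse, however, has a genuine gap, and it sits exactly where you put your ``main obstacle''. You propose to prove that an irreducible space satisfies the $H^\ast$-prime condition of Definition \ref{Hprime} and then invoke Lemma \ref{prime}. The first problem is that the stage-wise dichotomy you assert is false: Theorem \ref{finite} gives ``equivalence or weakly nilpotent'' only for self-maps of an $H^\ast$-prime (equivalently, irreducible) space, and the truncations $P^m(X)$ and $\tilde{{\rm sk}}_n(X)$ of an irreducible finite-type space are in general reducible. The paper's own example makes this concrete: the irreducible two-stage space $E$ has $P^m(E)\simeq K(\mathbb{Z}/p,m)\times K(\mathbb{Z}/p,m)$, on which the self-map $({\rm id},\ast)$ is neither a homotopy equivalence nor weakly nilpotent. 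So no dichotomy is available at any finite stage; this is precisely the ``half-isolated nodes'' phenomenon in the paper's forest picture.

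The second and more fundamental problem is the step ``a nonzero Fitting-invariant summand of $f^\ast$ yields a nontrivial direct factor of $X$''. Realizing a cohomological Fitting splitting as a space-level product (or wedge) decomposition is the hard analytic content of Wilkerson's theorem, and in the finite-type setting it is exactly what is \emph{not} available: note that Theorem \ref{finitetype} deliberately drops the clause ``irreducible if and only if $H^\ast$-prime'' present in Theorem \ref{finite}, and Definition \ref{prime2} exists precisely so that the authors never have to prove it. Uniqueness of the Wilkerson decompositions of the truncations does not give decompositions compatible along the tower --- uniqueness holds only up to non-canonical equivalence and order, and irreducible factors merge and newly appear as the stage grows --- and without such compatibility you cannot pass a splitting to the limit; this is where the phantom phenomena cited in the paper (Harper--Roitberg and Gray: spaces of the same $n$-type for all $n$ which are not homotopy equivalent) obstruct the argument. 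The paper's actual proof avoids $H^\ast$-primality of $X$ altogether: assuming $X$ is a retract of $Y\times W$, it applies Theorem \ref{finite} and Lemma \ref{prime} only to single irreducible factors $Z_i$ of $P^{n_i}(X)$ (which have finitely many homotopy groups), extracts them via the counting invariant $N_{m,n}$, which is monotone in $m$ and stabilizes at $1$ by irreducibility of $X$, keeps control of connectivity so that ${\rm holim}_k Z_k\simeq X$, and arranges \emph{compatible retractions} $Z_i\to P^{n_i}(Y)$ before passing to the limit. Any repair of your plan would need this kind of compatible-retraction bookkeeping anyway, and proving irreducible $\Rightarrow$ $H^\ast$-prime in finite type would be a strictly stronger statement than the lemma you are asked to prove.
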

\begin{proof}
$(1)$ A prime $H$-space is clearly irreducible. Conversely, given any irreducible $p$-local $H$-space $X$ of finite type, we want to prove that $X$ is prime. Suppose $X$ is a homotopy retract of $Y\times W$, then in the Postnikov system the $m$-th stage $P^m(X)$ is a homotopy retract of $P^m(Y)\times P^{m}(W)\simeq P^m(Y\times W)$ for any $m$.

In order to prove that $X$ is a homotopy retract of $Y$ or $W$, we need to analyze the Postnikov system carefully, and one of the technical difficulties is due to the existence of the phantom phenomena. In particular, Harper and Roitberg \cite{Harper92} and Gray \cite{Gray66} used the theory of phantom maps to construct some spaces $K_1$ and $K_2$ such that $P^m(K_1)\simeq P^m(K_2)$ for any $m$ but $K_1\not\simeq K_2$.

To start, we may view the Postnikov system of $X$ as an infinite forest (Figure $1$) by applying Wilkerson's unique decomposition (Theorem \ref{finite}) for each stage of the Postnikov system.

\begin{figure}[H]
\centering
\includegraphics[width=3.2in]{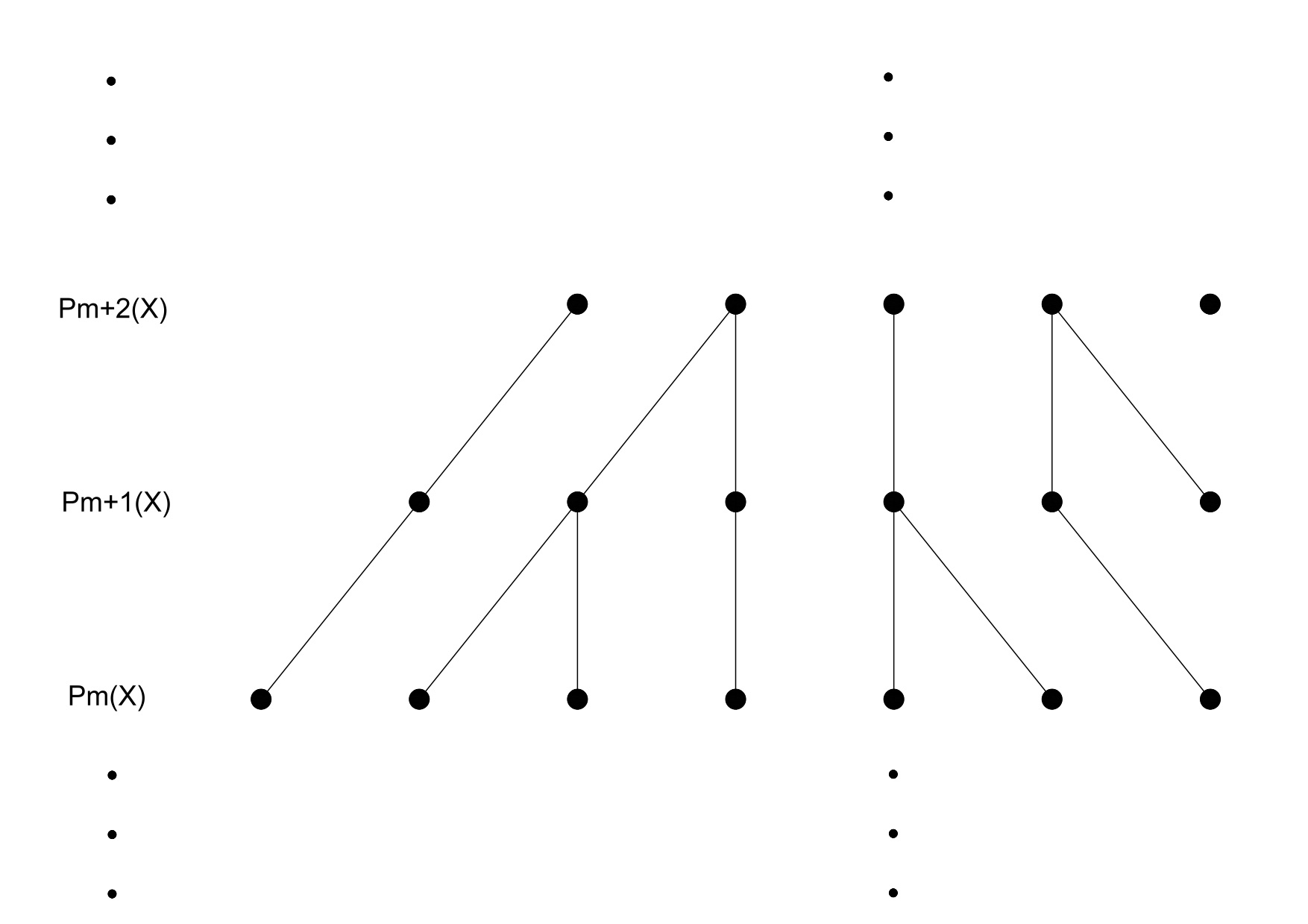}
\caption{Forest of Postnikov system of an $H$-space}\label{equi}
\end{figure}

In the forest, each node represents an irreducible component of the corresponding stage, and two nodes are connected iff the one in the lower stage is a factor of the corresponding stage of the Postnikov system of the other one. Then we should notice that

1) there are only finitely many nodes in each stage by Wilkerson's unique decomposition;

2) any two nodes in the same stage can not be connected to a common node in the lower stage;

3) Each of the nodes in some stage should be connected to some node in the one higher stage;

4) it is possible that at every stage there may be some `half-isolated nodes' that they are not connected to any nodes in the lower stage. Hence the number of nodes in each stage may not decrease with respect to the stage. 

For instance, let 
\[X=E\times \prod_{l\geq n+1}^{\infty}K(\mathbb{Z}/p, l)\] 
such that $E$ is the homotopy fibre of a map 
\[k=k_1\times k_2: K(\mathbb{Z}/p\oplus \mathbb{Z}/p, m)\simeq K(\mathbb{Z}/p, m)\times K(\mathbb{Z}/p, m) \rightarrow K(\mathbb{Z}/p, n+1)\]
with $n>m>1$ and further both $k_1$ and $k_2$ are homotopically nontrivial; in other word, $E$ is irreducible and admits a $2$-stage Postnikov system classified by the $k$-invariant $k$. 
Then the associated forest of $X$ is as showed in Figure $2$, where the nodes $1$ and $2$ represent the two Eilenberg-MacLane spaces  $K(\mathbb{Z}/p, m)$, the node $3$ represents $E\simeq P^n(E)$ and the half-isolated node $4$ represents $K(\mathbb{Z}/p, n+1)$ and so on (note that we used a reducible space as the example for purposes of simplicity).

\begin{figure}[H]
\centering
\includegraphics[width=3.2in]{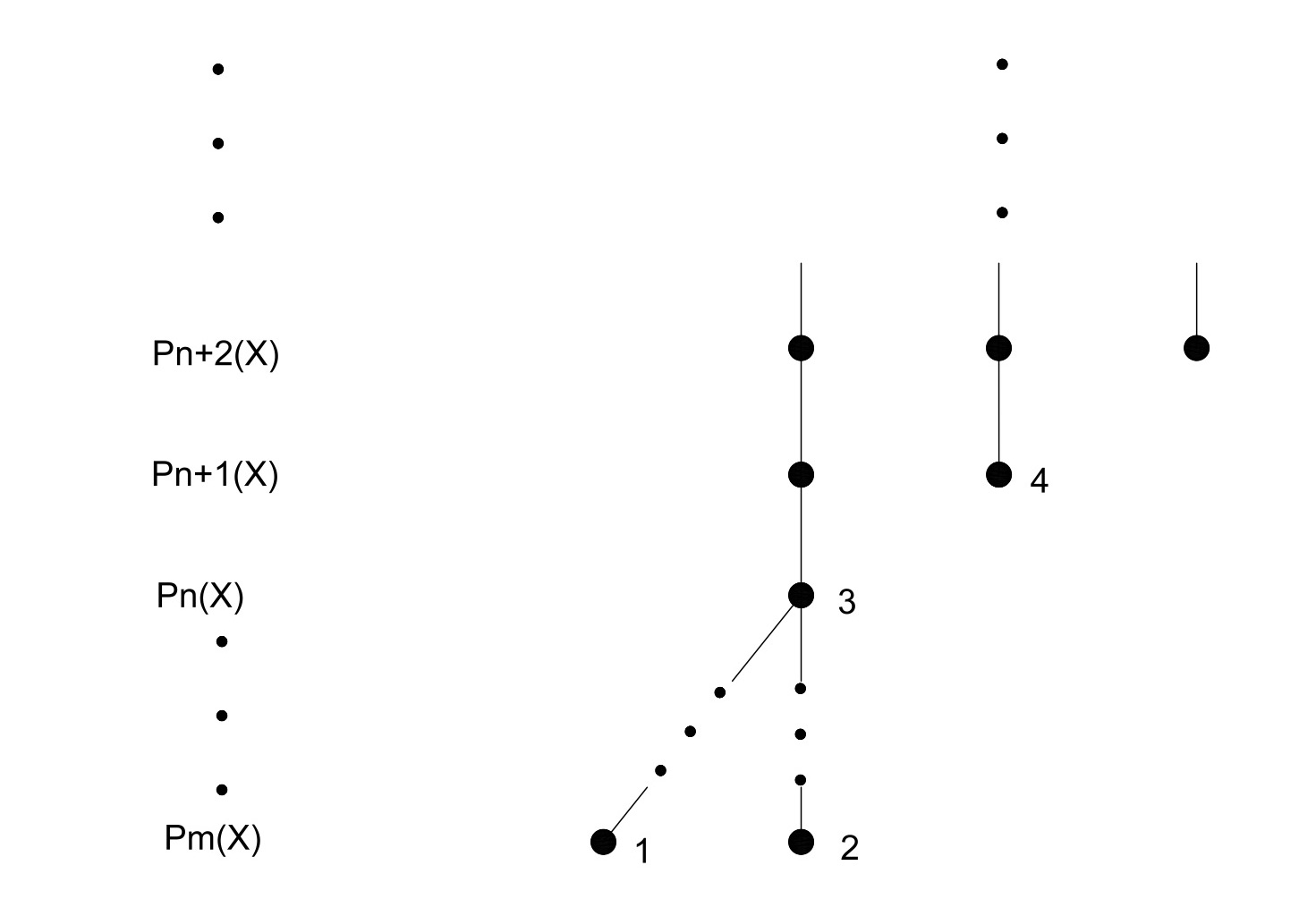}
\caption{An example of the forest associated to the Postnikov system}
\end{figure}

Now for any positive integers $m$ and $n$ such that $m>n$, we may define a number
\[
N_{m,n}:=\sharp \big\{ Z~|~P^m(X)\simeq \cdots\times Z\times \cdots ~{\rm with}~Z~{\rm irreducible}~; P^k(Z)\not\simeq \ast, \exists~k<n
\big\},
\]
where the multiplicity of $Z$ should be taken into account. With the help of Figure $3$, we see that $N_{m,n}$ is the number of $Z$'s such that $Z$ is rooted in some $k$-stage with $k<n$, i.e. is connected to $k$-stage by some path. Then $N_{m+1,n}\leq N_{m,n}$  for any $m>n$ by $2)$ and $3)$. Hence the sequence $\{N_{m,n} \}_m$ for any fixed $n$ should be stable eventually. Since $X$ is irreducible, the stable value should be $1$, i.e. $N_{m,n}=1$ for sufficiently large $m$ (\textbf{Note:} This does not mean the corresponding stage is irreducible). Choose any such $m$, we have a decomposition
$$P^{m}(X)\simeq  Z\times X^\prime,$$
where $Z$ is the only irreducible component which contributes to $N_{m,n}$. An clear but important fact is  $|X^\prime|\geq n-1$ (here $|~|$ refers to the connectivity).
Then by our earlier assumption, $Z$ is a homotopy retract of $P^{m}(Y)\times  P^{m}(W)$, and then of $P^m(Y)$ or $P^{m}(W)$ by Lemma \ref{prime}.
 \begin{figure}[H]
\centering
\includegraphics[width=2.9in]{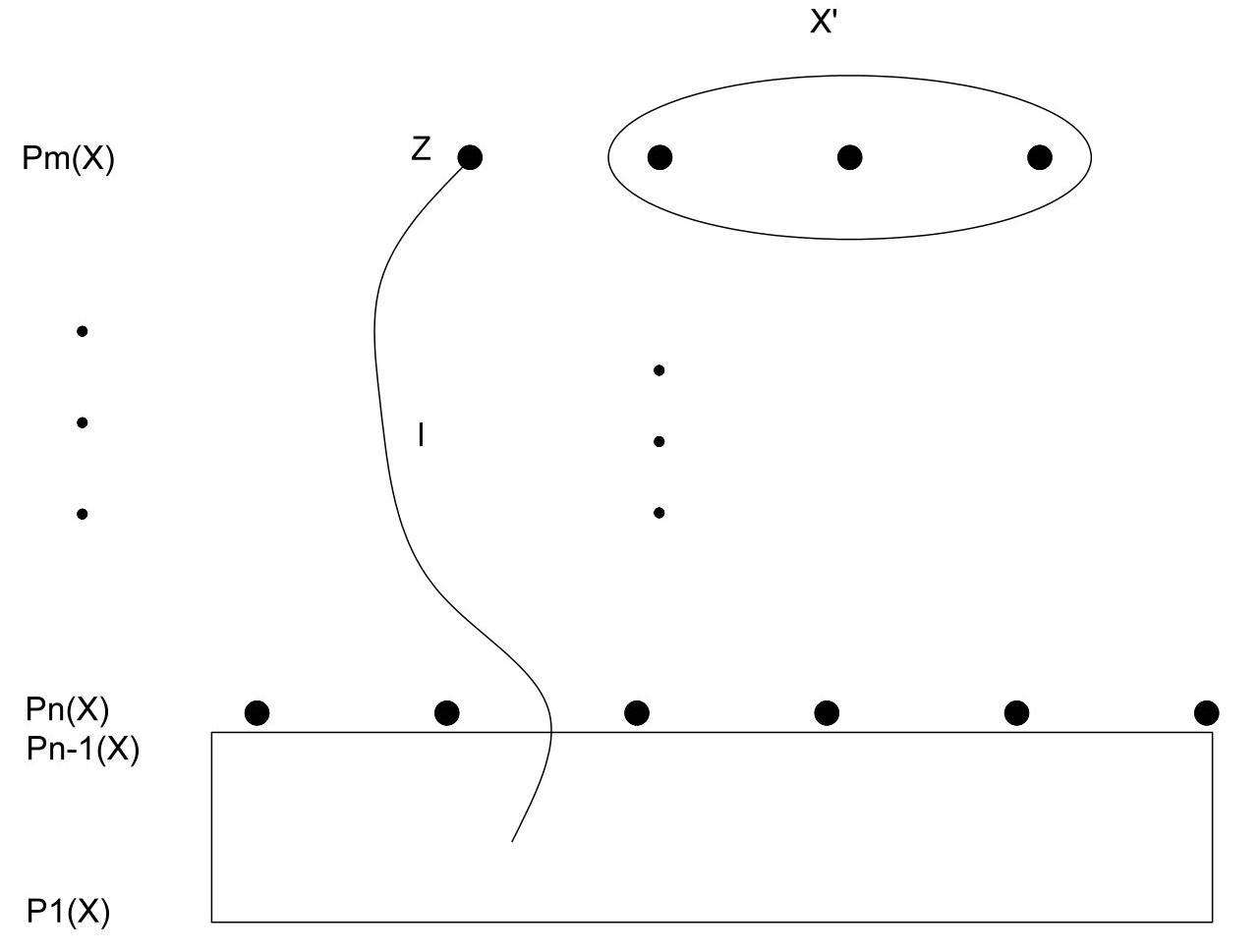}
\caption{To find irreducible factor rooted in lower stages}\label{equi}
\end{figure}

Now we want to iterate this procedure. Start with any $n=n_0$. By the above argument there exists $n_1>n_0$ such that
$$P^{n_1}(X)\simeq  Z_1\times X_1^\prime,$$
$Z_1$ is irreducible which is a homotopy retract of $P^{n_1}(Y)$ or $P^{n_1}(W)$ and $|X_1^\prime|\geq n_0-1$.

For $n_1$, there exists $n_2>n_1$ such that
$$P^{n_2}(X)\simeq  Z_2\times X_2^\prime,$$
$Z_2$ is irreducible and a homotopy retract of $P^{n_2}(Y)$ or $P^{n_2}(W)$ and $|X_2^\prime|\geq n_1-1$. And by our choice, we notice that $Z_1$ is one of the components of $P^{n_1}(Z_2)$, and $Z_2$ can not be a homotopy retract of $P^{n_2}(Y)$ if $Z_1$ is not a retract of $P^{n_1}(Y)$.

Iterating the above process, we get a strictly increasing sequence of numbers $n_k>n_{k-1}>\ldots >n_0$ such that for each $1\leq i\leq k$,
$$P^{n_i}(X)\simeq  Z_i\times X_i^\prime,$$
and w.l.o.g., the irreducible $H$-space $Z_i$ is a homotopy retract of $P^{n_i}(Y)$ and also of $P^{n_i}(Z_{i+1})$, and $|X_i^\prime|\geq n_{i-1}-1$. Hence $\pi_\ast (Z_i)\cong \pi_\ast(X)$ for $\ast\leq n_{i-1}-1$ which implies ${\rm holim}_{k}Z_k\simeq X$ (hence, the spaces $X_i^\prime$ can be viewed as the redundant parts of the Postnikov system and we only need to trace the spaces $Z_i$).

\begin{figure}[H]
\centering
\includegraphics[width=2.9in]{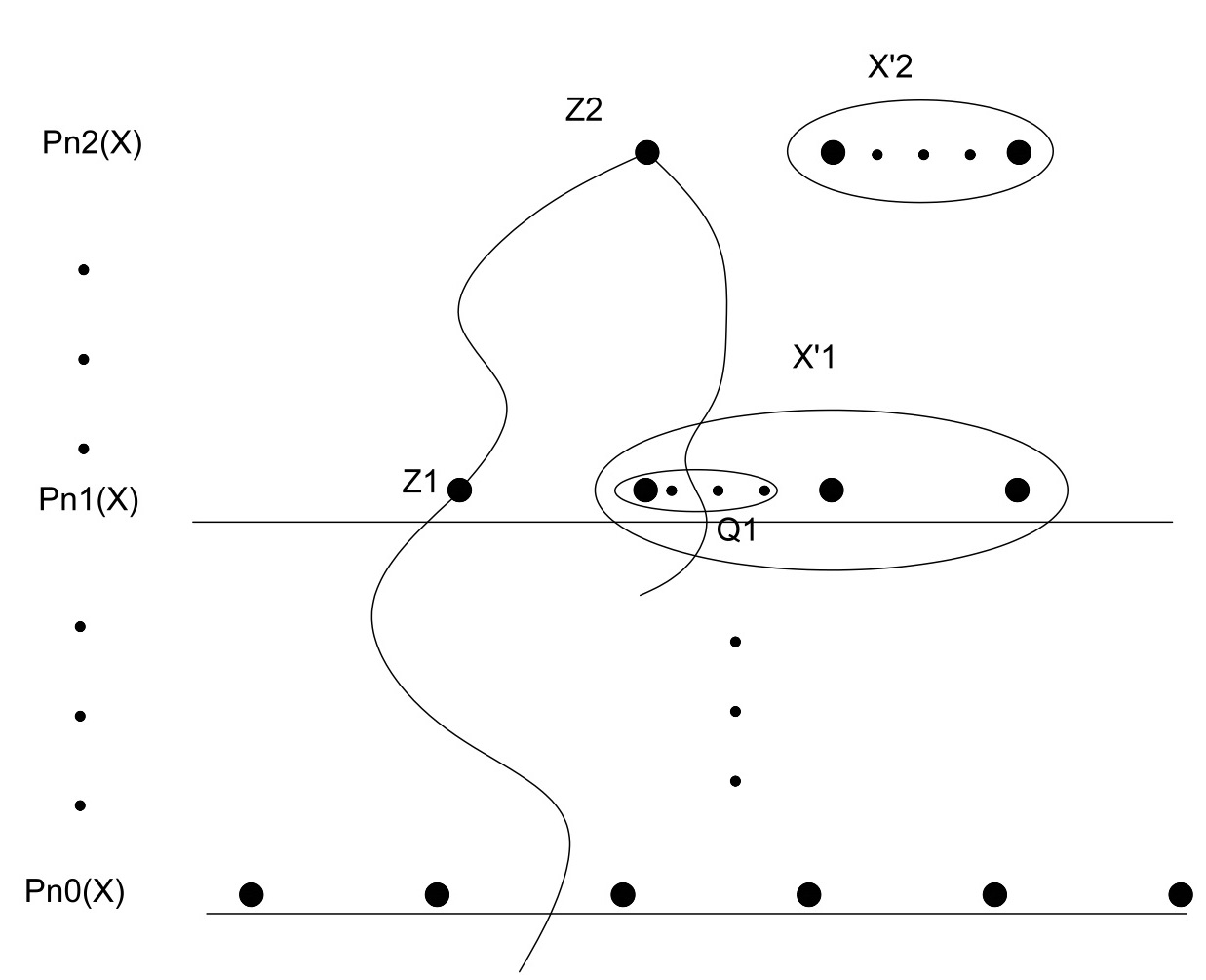}
\caption{First two steps of the iterating process}\label{equi}
\end{figure}

Furthermore, there exist decompositions
$$P^{n_i}(Z_{i+1})\simeq Z_i \times Q_i,$$
$$P^{n_{i+1}}(Y)\simeq Z_{i+1} \times Y_{i+1},$$
where $Q_i$ is a homotopy retract of $X_i^\prime$, and then $|Q_i|\geq|X_i^\prime|\geq n_{i-1}-1$.
Hence we have
$$P^{n_i}(Y)\simeq Z_i \times Q_i\times P^{n_i}(Y_{i+1}),$$
and we may define $$Y_i^\prime \simeq Z_i\times P^{n_i}(Y_{i+1}).$$
Then we see ${\rm holim}_{k} Y_i^\prime \simeq Y$, and also $Z_i$ is a homotopy retract of $Y_i^\prime$.
Passing to the limit, we see that $X$ should be a homotopy retract of $Y$.

$(2)$~The proof is exactly dual to that of $(1)$.
\end{proof}

Now we are ready to prove the unique decomposition theorem for the finite type case.
\begin{theorem}\label{finitetype}
Let $X$ be a connected nilpotent $p$-local space of finite type, we have

$(1)$ if $X$ is a $H$-space, then
\begin{equation}\label{decomH}
X\simeq \times_iX_i
\end{equation}
with each $X_i$ irreducible as $H$-space and the decomposition is unique up to order.

$(2)$ if $X$ is a simply connected co-$H$-space, then
\begin{equation}\label{decomcoH}
X\simeq \vee_iX_i
\end{equation}
with each $X_i$ irreducible as co-$H$-space and the decomposition is unique up to order.
\end{theorem}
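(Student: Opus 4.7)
The plan is to reduce both parts to Wilkerson's finite-stage theorem (Theorem \ref{finite}) by filtering $X$ through a tower of finite-stage approximations: the Postnikov tower $\{P^m(X)\}$ of Lemma \ref{piece}(1) in part (1), and the skeletal tower $\{\tilde{{\rm sk}}_n(X)\}$ of Lemma \ref{piece}(2) in part (2). I will lay out the argument for (1) in detail; part (2) is entirely dual, with inverse limits replaced by direct limits, products by wedges, and Theorem \ref{finite}(1) by Theorem \ref{finite}(2) applied to each $\tilde{{\rm sk}}_n(X)$ (which is finite of finite type because the minimal model is).

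For existence in (1), each $P^m(X)$ is a connected nilpotent $p$-local $H$-space with only finitely many nontrivial homotopy groups, so Theorem \ref{finite}(1) gives a decomposition $P^m(X)\simeq\prod_{j=1}^{N_m}Z_j^{(m)}$ into irreducibles, unique up to order. Assembling these stagewise decompositions produces the forest of Figure 1, and I then run verbatim the stabilization argument from the proof of Lemma \ref{prime=irred}: for each connectivity threshold $n$ the count $N_{m,n}$ of irreducible factors of $P^m(X)$ rooted below stage $n$ is non-increasing in $m$ and hence stabilizes, producing a coherent chain of irreducible Postnikov factors at ever-higher stages whose homotopy inverse limit is an irreducible $H$-space retract $X_1$ of $X$. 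Splitting off $X_1$ through its retraction and iterating, starting each iteration from a strictly higher connectivity threshold, exhausts all the homotopy of $X$ because finite type forces only finitely many factors to contribute to any $\pi_k$. This assembles the weak product decomposition $X\simeq\prod_iX_i$ with each $X_i$ irreducible.

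For uniqueness, suppose $\prod_iX_i\simeq X\simeq\prod_jY_j$ with all factors irreducible. By Lemma \ref{prime=irred} every $X_i$ and $Y_j$ is prime. Finite type of $X$ ensures that at each $m$ only finitely many of the $X_i$ (resp.\ $Y_j$) have nontrivial $P^m$, so $P^m$ commutes with the weak product and yields two finite decompositions of $P^m(X)$ into irreducibles. Wilkerson's uniqueness (Theorem \ref{finite}(1)) matches these Postnikov factors stage by stage, and combining this matching with primality of each $X_i$ (used to pick out the unique $Y_j$ through which the projection-inclusion composite must factor at each stage) promotes the stagewise bijections into a single bijection between $\{X_i\}$ and $\{Y_j\}$ with paired factors homotopy equivalent.

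The principal obstacle is the phantom phenomenon already flagged in the proof of Lemma \ref{prime=irred}: a finite-type $H$-space is not determined by its Postnikov data alone, so compatible Postnikov-level information must be upgraded to a genuine homotopy factorization of $X$. I bypass this exactly as in Lemma \ref{prime=irred}, by working throughout with the \emph{prime} characterization of irreducibility, which supplies honest $H$-space retractions off of $X$ at each iteration rather than merely compatible systems of Postnikov retractions. In part (2) the analogous subtlety --- that a finite-type co-$H$-space is not determined by its skeletal filtration alone --- is controlled in the identical way via the co-$H$-space half of Lemma \ref{prime=irred}.
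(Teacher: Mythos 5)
Your proposal contains a genuine gap, and it sits in the existence half. The paper's own proof of Theorem \ref{finitetype} deliberately keeps existence elementary: a space-level induction on connectivity and the finite type hypothesis, with all the Postnikov-tower/forest machinery quarantined inside Lemma \ref{prime=irred} (where it is used only to prove \emph{irreducible implies prime}, under the standing hypothesis that the space in question is irreducible). You instead re-run that machinery for an arbitrary $X$ to manufacture an irreducible factor as $X_1=\mathrm{holim}_k Z_k$ for a coherent branch of irreducible factors $Z_k$ of $P^{n_k}(X)$, and there the argument breaks at a concrete step: nothing shows the limit $X_1$ is irreducible. In the lemma, the stabilization of $N_{m,n}$ at the value $1$ uses irreducibility of $X$; for general $X$ the stable value can exceed $1$, and after you select one branch, the only stagewise control you have on $X_1$ is $P^m(X_1)\simeq P^m(Z_k)$ for $m\le n_{k-1}-1$, i.e.\ the stages of $X_1$ are \emph{truncations} of irreducible spaces. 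But truncations of irreducibles can decompose --- the paper flags exactly this point in the proof of Lemma \ref{prime=irred} (``\textbf{Note:} This does not mean the corresponding stage is irreducible'') --- so irreducibility of $X_1$ does not follow from any stagewise data. Primality cannot rescue this: Lemma \ref{prime=irred} lets an \emph{already irreducible} space slide through product decompositions, whereas here the existence of any irreducible retract at all is the point at issue.

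For uniqueness, the part of your plan that works is precisely the paper's proof: every factor is prime by Lemma \ref{prime=irred}, so each $X_i$, being a retract of the weak product $\times_j Y_j$, retracts onto a single $Y_j$ after finitely many applications of two-factor primality (the tail can be taken highly connected by finite type), whence $X_i\simeq Y_j$ by irreducibility and one cancels --- this is the adaptation of Wilkerson's Theorem $1.7$ argument \cite{Wilk} that the paper invokes. The stagewise layer you add on top of it is both redundant and incorrect as stated: since $P^m(X_i)$ need not be irreducible, Theorem \ref{finite}(1) matches only further refinements of the stages, not your factors $X_i$, $Y_j$ themselves; and even a full system of equivalences $P^m(X_i)\simeq P^m(Y_j)$ for all $m$ cannot be promoted to $X_i\simeq Y_j$, by the phantom examples of \cite{Harper92} and \cite{Gray66} that the paper itself quotes. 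Finally, your dualization in part (2) has an extra unaddressed point: Theorem \ref{finite}(2) applies to \emph{finite co-$H$-spaces}, so you would need to know that $\tilde{\mathrm{sk}}_n(X)$ of a simply connected co-$H$-space again carries a co-$H$ structure; this requires a cellular-approximation argument and is nowhere supplied. The fix is to abandon the tower-assembly existence proof and argue as the paper does: split off factors of minimal connectivity directly, using finite generation of each homotopy (or homology) group to bound the number of factors rooted in each degree, and pass to the weak product (respectively wedge) as a colimit.
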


\begin{proof}
The existence of such decompositions into irreducible components follows by inductions on the connectivity and the finite type assumption. For uniqueness, since irreducible is equivalent to prime by Lemma \ref{prime=irred} in both cases, a similar argument as that in the proof of Theorem $1.7$ in \cite{Wilk} will provide the proof.
\end{proof}

\section{$p$-local version of Gray's correspondence}
\noindent In \cite{Gray}, Gray described a $1$-$1$ correspondence between atomic $H$-spaces and atomic co-$H$-spaces in the $p$-complete category based on the unique decomposition theorem in that setting. Since we have already proved the unique decomposition theorem in the $p$-local setting (Theorem \ref{finitetype}), we may develop a parallel theory. Notice in this situation, we have to use irreducible or equivalently prime spaces instead of atomic ones. We first give the following definition as in \cite{Gray}:
\begin{definition}\label{pair}
We call a pair of connected $p$-local prime spaces $(Y, X)$ of finite type \textit{a corresponding pair} if there are structure maps $f$, $g$, $g^\prime$ and $h$ such that the compositions
$$X\stackrel{g}{\rightarrow} \Omega Y \stackrel{h}{\rightarrow} X,$$
$$Y\stackrel{f}{\rightarrow} \Sigma X \stackrel{g^\prime}{\rightarrow}Y$$
are homotopic to identity.
\end{definition}

To develop the corresponding theory, we have to recall two decomposition theorems due to Grbi{\'c}, Theriault and the second author.
\begin{theorem}[(\cite{Grbic2}, Theorem $1.2$)]\label{suspension}
Let $A(V)$ be any functorial coalgebra retract of $T(V)$. Then there exists a geometric realization functor $\bar{A}(Y)$ of $A(V)$ which is also a functorial homotopy retract of $\Omega Y$, where $V=\Sigma^{-1}\tilde{H}_\ast(Y)$ and $Y$ is any $p$-local simply connected co-$H$-space of finite type. Furthermore, for any $p$-local connected co-$H$-space $Z$, there is a functorial homotopy decomposition
\[
Z\wedge \bar{A}(Y)\simeq \bigvee_{n=1}^{\infty}[Z\wedge \bar{A}(Y)]_n,
\]
where $[Z\wedge \bar{A}(Y)]_n$ is a space with the property that
$$\tilde{H}_\ast ([Z\wedge \bar{A}(Y)]_n)\cong \tilde{H}_\ast (Z)\otimes A_n(\Sigma^{-1}\tilde{H}_\ast(Y)).$$
\end{theorem}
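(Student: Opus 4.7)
The plan is to proceed in the style of Selick--Wu's program of functorial homotopy decompositions. The starting point is the classical Bott--Samelson/James identification: for any simply connected $p$-local co-$H$-space $Y$ of finite type, there is a natural isomorphism of tensor Hopf algebras
\[
H_\ast(\Omega Y;\mathbb{Z}/p)\cong T(V),\qquad V=\Sigma^{-1}\tilde{H}_\ast(Y),
\]
where the Hopf algebra structure on $T(V)$ is given by declaring $V$ primitive. A functorial coalgebra retract $A(V)\subseteq T(V)$ is, equivalently, a natural coalgebra idempotent $e_V:T(V)\to T(V)$ with image $A(V)$, and the task is to realize $e_V$ geometrically in a way that is natural in $Y$ and that behaves well under smashing with an arbitrary co-$H$-space $Z$.

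First I would promote the algebraic idempotent $e_V$ to a stable, natural self-map. The key observation is that $T(V)=\bigoplus_{n\geq 1} V^{\otimes n}$, so $e_V$ decomposes as a sequence of natural transformations $e_n:V^{\otimes n}\to V^{\otimes n}$ commuting with comultiplication. Each $e_n$ is a $\mathbb{Z}_{(p)}$-linear natural transformation on the functor $V\mapsto V^{\otimes n}$, and by a standard argument in modular representation theory (Schur--Weyl / Pirashvili) such natural operations are precisely elements of the symmetric group algebra $\mathbb{Z}_{(p)}[\Sigma_n]$, acting on $V^{\otimes n}$ by permutation. Geometrically, $\Sigma_n$ acts on the iterated smash $Y^{\wedge n}$, so $e_n$ lifts to an honest self-map $\hat e_n:Y^{\wedge n}\to Y^{\wedge n}$ which on homology realizes $e_n$ under the isomorphism $\tilde{H}_\ast(Y^{\wedge n})\cong V^{\otimes n}$ (up to a suspension shift).

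Next I would assemble these pieces into a self-map of $\Omega Y$. Because $Y$ is a co-$H$-space, there is a natural James-type filtration and, after one suspension, a natural stable splitting of $\Sigma \Omega Y$ into wedge summands indexed by the weight $n$, each of the form $\Sigma Y^{\wedge n}$ (this is essentially Theorem~\ref{ksuspension} alluded to in Theorem \ref{pGray}). Summing the maps $\Sigma \hat e_n$ across the stable splitting produces a stable self-map $\Sigma \tilde e_Y:\Sigma\Omega Y\to\Sigma\Omega Y$ realizing $e_V$ on $H_\ast$. Using the loop-space adjunction and a standard telescope construction, I would desuspend this to an unstable self-map $\tilde e_Y:\Omega Y\to\Omega Y$ and define $\bar{A}(Y)$ as the homotopy colimit of its iterates; the coalgebra condition on $e_V$ implies $\tilde e_Y$ is idempotent up to homotopy, so $\bar{A}(Y)$ is a homotopy retract of $\Omega Y$. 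Functoriality in $Y$ is automatic from the functoriality of $V\mapsto e_V$ and of all the geometric constructions just used.

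Finally, for the smash decomposition, I would exploit the grading $A(V)=\bigoplus_{n\geq 1}A_n(V)$ inherited from $T(V)$. Smashing with $Z$ and using the stable splitting together with the functorial realization of each $A_n$ on the $n$-fold smash construction, the weight-$n$ summand $[Z\wedge\bar{A}(Y)]_n$ is defined as the image of $Z\wedge \hat e_n$ on $Z\wedge Y^{\wedge n}$ (suitably desuspended back through the James filtration). Its homology is $\tilde{H}_\ast(Z)\otimes A_n(V)$ by the homology computation of $\hat e_n$. The main obstacle, and the place where most care is needed, is precisely the desuspension step: the natural operations $e_n$ exist stably, but promoting them to unstable, mutually compatible self-maps of $\Omega Y$ requires the connectivity of $Y$ together with the co-$H$ structure to supply the James-type unstable model, and checking that the telescope commutes with the wedge decomposition indexed by $n$. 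Once this compatibility is in place, the stated decomposition and its homology description follow formally.
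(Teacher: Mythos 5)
First, note that the paper does not prove this statement at all: it is quoted verbatim from \cite{Grbic2} (with the realization functor $\bar{A}$ coming from \cite{Selick2}), so your proposal can only be measured against the strategy of those cited papers. Against that standard, your outline has the right Selick--Wu flavour (natural coalgebra idempotents decompose weightwise into elements of $\mathbb{Z}/p[\Sigma_n]$, which act geometrically on smash powers and are assembled by telescopes --- this is exactly the technique the present paper uses in Section~5 for $Q_n$), but it contains a genuine gap, in fact two related ones, at exactly the point you flag as ``where most care is needed.'' Your assembly step invokes ``a natural stable splitting of $\Sigma\Omega Y$ into wedge summands indexed by the weight $n$, each of the form $\Sigma Y^{\wedge n}$.'' That is the James splitting, valid only when $Y$ is a suspension. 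For a co-$H$-space that is not a suspension the summands are \emph{not} $\Sigma Y^{\wedge n}$: they are the spaces $[\Sigma\Omega Y]_n$ with $\tilde{H}_\ast([\Sigma\Omega Y]_n)\cong \Sigma^{1-n}\tilde{H}_\ast(Y)^{\otimes n}$, i.e.\ $(n-1)$-fold desuspensions of $Y^{\wedge n}$ (compare the proof of Theorem~\ref{pGray}, where $\Sigma^{n-1}[\Sigma\Omega Y]_n\simeq Y^{\wedge n}$). The existence of these desuspended summands is essentially the theorem you are trying to prove --- Theorem~\ref{ksuspension}, which you cite for the splitting, is itself derived from \cite{Grbic2}, Theorem~1.2 --- so the argument is circular; moreover your idempotents $\hat{e}_n$ live on $Y^{\wedge n}$, not on $[\Sigma\Omega Y]_n$, so ``summing $\Sigma\hat{e}_n$ across the splitting'' does not typecheck without first solving precisely this desuspension problem.

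The second gap is the claim that the assembled stable self-map can be desuspended to $\tilde{e}_Y\colon \Omega Y\to\Omega Y$ ``using the loop-space adjunction and a standard telescope construction.'' The adjoint of a self-map of $\Sigma\Omega Y$ is a map $\Omega Y\to\Omega\Sigma\Omega Y$, and to land back in $\Omega Y$ one needs a retraction of $\Omega Y$ off $\Omega\Sigma\Omega Y$; this is exactly where the co-$H$ hypothesis enters, via Ganea's characterization that $Y$ is a co-$H$-space if and only if the evaluation $\mathrm{ev}\colon\Sigma\Omega Y\to Y$ admits a section $s$, whence $\Omega(\mathrm{ev})\circ\Omega(s)\simeq \mathrm{id}_{\Omega Y}$. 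The route actually taken in \cite{Selick2} is to transport the functorial idempotents of $\Omega\Sigma X$ (applied with $X=\Omega Y$) along this retraction and verify on homology that the result still realizes $A(V)$ for $V=\Sigma^{-1}\tilde{H}_\ast(Y)$; a related assertion in your first paragraph --- that $V$ is primitive in $H_\ast(\Omega Y)\cong T(V)$ --- holds for suspensions but not for general co-$H$-spaces, and coping with the non-suspension coalgebra structure is a substantive part of \cite{Selick2}. Finally, \cite{Grbic2} obtains the smash splitting of $Z\wedge\bar{A}(Y)$ by constructing James--Hopf invariants for looped co-$H$-spaces rather than by taking weightwise images of $Z\wedge\hat{e}_n$, precisely because the latter only produces the summands after extra suspensions. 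So your blueprint would prove the theorem for $Y$ a suspension, but for the stated generality the co-$H$ section, the non-primitive coalgebra structure, and the desuspension of the weight summands are missing ideas, not routine verifications.
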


\begin{theorem}[(Generalized Hilton-Milnor Theorem, \cite{Grbic3}, Theorem $1.2$)]\label{Hilton}
Let $Y_1, \ldots, Y_m$ be simply-connected co-$H$-spaces. There exists a natural homotopy decomposition
\[
\Omega(Y_1\vee \cdots \vee Y_m)\simeq \prod_{\alpha\in \mathcal{I}}\Omega M((Y_i,\alpha_i)_{i=1}^m),
\]
where $\mathcal{I}$ runs over a vector space basis of the free Lie algebra $L\langle y_1,\ldots, y_m\rangle$ and each $M((Y_i,\alpha_i)_{i=1}^m)$ is a simply-connected co-$H$-space such that
$$\tilde{H}_\ast(M((Y_i,\alpha_i)_{i=1}^m))\cong \Sigma \Big((\Sigma^{-1}\tilde{H}_\ast(Y_1))^{\otimes \alpha_1}\otimes \cdots \otimes (\Sigma^{-1}\tilde{H}_\ast(Y_m))^{\otimes \alpha_m}
\Big).$$
\end{theorem}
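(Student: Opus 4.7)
The plan is to follow the template of the classical Hilton--Milnor theorem, but replacing spheres with general simply-connected co-$H$-spaces. The first step is the homological model. By the Bott--Samelson-type computation for loops on a wedge of co-$H$-spaces, one has
\[
H_\ast(\Omega(Y_1\vee\cdots\vee Y_m)) \cong T(V), \qquad V=\bigoplus_{i=1}^m \Sigma^{-1}\tilde H_\ast(Y_i),
\]
and the Poincar\'e--Birkhoff--Witt isomorphism $T(V)\cong U(L(V))$ decomposes this as a tensor product of symmetric algebras indexed by a vector-space basis $\mathcal I$ of the free Lie algebra $L\langle y_1,\ldots,y_m\rangle$. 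Each basis element of type $(\alpha_1,\ldots,\alpha_m)$ contributes, after one desuspension, a tensor of the form $(\Sigma^{-1}\tilde H_\ast(Y_1))^{\otimes\alpha_1}\otimes\cdots\otimes (\Sigma^{-1}\tilde H_\ast(Y_m))^{\otimes\alpha_m}$, which matches the prescribed homology of $M((Y_i,\alpha_i))$ after one suspension. Thus the algebraic template of the decomposition is forced.

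The second step is to realise this algebraic splitting geometrically. For each $\alpha\in\mathcal I$, I would construct a simply-connected co-$H$-space $M((Y_i,\alpha_i))$ with the prescribed homology, together with an iterated Whitehead-type map $w_\alpha\colon M((Y_i,\alpha_i))\to Y_1\vee\cdots\vee Y_m$. For suspensions this is classical, since smash products of suspensions are again suspensions and the Whitehead product $\Sigma X\wedge\Sigma Y\to \Sigma X\vee\Sigma Y$ is standard. For a general co-$H$-space $Y$ one uses that $Y$ is a functorial retract of $\Sigma\Omega Y$ via the co-$H$ structure, so the Whitehead-product construction for suspensions can be transported along the evaluation map to produce the needed $w_\alpha$. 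The space $M((Y_i,\alpha_i))$ itself is obtained as a homotopy retract of the appropriate iterated smash product by applying functorial James/Selick--Wu-type splittings to each $\Sigma\Omega Y_i$.

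The third step is to assemble the product map
\[
\Phi\colon\prod_{\alpha\in\mathcal I}\Omega M((Y_i,\alpha_i))\longrightarrow \Omega(Y_1\vee\cdots\vee Y_m)
\]
obtained by looping the $w_\alpha$'s and multiplying in the loop space, and to show that $\Phi$ is a homotopy equivalence. By construction $\Phi_\ast$ realises the PBW basis of $T(V)$, so it is a $\mathbb Z/p$-homology isomorphism; since both sides are connected $p$-local spaces of finite type, $\Phi$ is a homotopy equivalence. Naturality in the $Y_i$ follows from the functoriality of each ingredient: Bott--Samelson, the PBW basis choice, the retraction $Y_i\hookrightarrow\Sigma\Omega Y_i$, and the splitting producing $M((Y_i,\alpha_i))$.

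The main obstacle is the construction of the spaces $M((Y_i,\alpha_i))$ as honest simply-connected co-$H$-spaces with the correct homology, done \emph{functorially} in the $Y_i$'s. In the sphere case this is trivial because iterated smashes of spheres are themselves spheres; for general co-$H$-spaces one does not have a co-$H$-structure on arbitrary iterated smashes and must extract the required summand from $\Sigma\Omega(Y_1\vee\cdots\vee Y_m)$, or equivalently from suitable iterations of $\Sigma\Omega$ applied to smash products, using the functorial decomposition machinery of Selick and the second author. Matching the $w_\alpha$'s with the correct PBW summand on homology, and verifying that the multiplicative structure on the product of loop spaces is compatible with the loop multiplication on $\Omega(Y_1\vee\cdots\vee Y_m)$, is where the bulk of the technical work lies.
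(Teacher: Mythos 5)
First, a point of calibration: the paper does not prove this statement at all. Theorem \ref{Hilton} is imported verbatim from \cite{Grbic3} (Theorem 1.2 there) and used as a black box, so your proposal can only be compared with the proof in that reference, not with anything internal to this paper. Measured against that, your outline has the right global shape — tensor-algebra homology of the looped wedge, an algebraic Hilton--Milnor template, geometric realization exploiting the section $s\colon Y\to\Sigma\Omega Y$ of the evaluation map, then a homology isomorphism plus the Whitehead theorem — and this is indeed the spirit of the argument in \cite{Grbic3}. Two corrections are needed, one small and one serious. The small one: over $\mathbb{Z}/p$ the Poincar\'e--Birkhoff--Witt theorem does \emph{not} split $T(V)$ into symmetric algebras; the statement you actually need is the Witt--Hall/Hilton coalgebra isomorphism
\[
T(V_1\oplus\cdots\oplus V_m)\;\cong\;\bigotimes_{\alpha\in\mathcal I}T\bigl(V_1^{\otimes\alpha_1}\otimes\cdots\otimes V_m^{\otimes\alpha_m}\bigr),
\]
each factor a tensor algebra, matching $H_\ast(\Omega M((Y_i,\alpha_i)))$. (Relatedly, the input $H_\ast(\Omega Y)\cong T(\Sigma^{-1}\tilde H_\ast(Y))$ for a co-$H$-space $Y$ is itself not Bott--Samelson but a consequence of $\Omega Y$ retracting off $\Omega\Sigma\Omega Y$; it deserves a citation rather than an assertion.)

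The serious gap is the construction of the spaces $M((Y_i,\alpha_i))$ and the maps $w_\alpha$, which is the actual content of the theorem, and your proposed mechanism fails as described. ``Transporting the Whitehead-product construction along the evaluation map'' amounts to smashing together the idempotents $s_i\circ \mathrm{ev}_i$ on the $\Sigma\Omega Y_i$; but that idempotent lives on $\Sigma^{|\alpha|}\bigl((\Omega Y_1)^{\wedge\alpha_1}\wedge\cdots\wedge(\Omega Y_m)^{\wedge\alpha_m}\bigr)$ and its telescope is $Y_1^{\wedge\alpha_1}\wedge\cdots\wedge Y_m^{\wedge\alpha_m}$, whose reduced homology is $\Sigma^{|\alpha|}\bigl(V_1^{\otimes\alpha_1}\otimes\cdots\bigr)$ — one suspension too many for every smash factor beyond the first. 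What the theorem requires is a retract of the \emph{single} suspension $\Sigma\bigl((\Omega Y_1)^{\wedge\alpha_1}\wedge\cdots\wedge(\Omega Y_m)^{\wedge\alpha_m}\bigr)$ realizing $\Sigma\bigl(V_1^{\otimes\alpha_1}\otimes\cdots\bigr)$, and this cannot be manufactured coordinatewise, because the projection $T(V_i)\to V_i$ is not realizable by a self-map of $\Omega Y_i$, only by a self-map of $\Sigma\Omega Y_i$. This is exactly the point at which \cite{Grbic3} must invoke the functorial decomposition and suspension-splitting technology of Selick--Theriault--Wu \cite{Selick2,Grbic2} (the same circle of results quoted in this paper as Theorems \ref{suspension} and \ref{ksuspension}) to extract the correct co-$H$ summands with only one suspension available. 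You do name this machinery in your closing paragraph and correctly flag it as the main obstacle, but naming it is not deploying it: as written, the proposal assumes the existence of the $M((Y_i,\alpha_i))$ with the prescribed homology and co-$H$ structure together with compatible maps $w_\alpha$, which is precisely what has to be proved. Granting that input, your step three (loop the $w_\alpha$, multiply in a fixed order, verify the isomorphism on the tensor-algebra decomposition, handle the infinite product via the increasing connectivity of the factors, and conclude by the Whitehead theorem for simply connected $p$-local spaces of finite type) is standard and correct.
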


Now we are ready to prove the correspondence theorem.

\begin{theorem}\label{correspondence}
There is $1$-$1$ correspondence in the sense of Definition \ref{pair} between the homotopy types of connected $p$-local irreducible $H$-spaces of finite type and the homotopy types of simply connected $p$-local irreducible co-$H$-spaces of finite type.
\end{theorem}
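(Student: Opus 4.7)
The plan is to define two assignments, one in each direction, based on taking the ``least-connected irreducible factor'' in the natural product/wedge decomposition, and then to verify that they are inverse to each other via a connectivity comparison, closely paralleling Gray's atomic argument in \cite{Gray} but with atoms replaced by irreducibles.

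First, given an irreducible simply connected $p$-local co-$H$-space $Y$ of finite type, I will apply the generalized Hilton--Milnor theorem (Theorem \ref{Hilton}) to produce a weak product decomposition of $\Omega Y$, and then Theorem \ref{finitetype}(1) to refine it uniquely into a weak product $\prod_\alpha X_\alpha$ of irreducible $H$-spaces. Since $\mathrm{conn}(\Omega Y)=\mathrm{conn}(Y)-1$, at least one $X_\alpha$ realizes this minimal connectivity; I will pick such a least-connected irreducible factor and call it $X_Y$. Dually, for an irreducible connected $p$-local $H$-space $X$ of finite type, apply Theorem \ref{finitetype}(2) to $\Sigma X\simeq\bigvee_\beta Y_\beta$ and extract a least-connected irreducible co-$H$-factor $Y_X$. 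The structure maps $g,h,f,g'$ of Definition \ref{pair} are then the evident inclusions and retractions coming from these decompositions, so that $hg\simeq \mathrm{id}_X$ and $g'f\simeq \mathrm{id}_Y$ hold by construction.

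To check that $Y\mapsto X_Y$ and $X\mapsto Y_X$ are mutually inverse, I will write $\Omega Y\simeq X_Y\times Z$, where $Z$ collects the remaining irreducible factors of strictly greater connectivity, and appeal to the James splitting to obtain
\[
\Sigma\Omega Y \;\simeq\; \Sigma X_Y \;\vee\; \Sigma Z \;\vee\; \Sigma(X_Y\wedge Z).
\]
Since $Y$ is a co-$H$-space, the counit $\Sigma\Omega Y\to Y$ admits a section determined by the co-$H$-structure, so $Y$ is an irreducible co-$H$-retract of $\Sigma\Omega Y$. Because
\[
\mathrm{conn}(Y)=\mathrm{conn}(\Sigma X_Y) \;<\; \mathrm{conn}(\Sigma Z) \;\le\; \mathrm{conn}(\Sigma(X_Y\wedge Z)),
\]
the uniqueness of wedge decompositions (Theorem \ref{finitetype}(2)) forces $Y$ to occur as an irreducible factor of $\Sigma X_Y$, and in fact as its least-connected one; hence $Y_{X_Y}\simeq Y$. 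The symmetric argument, using the unit $X\to\Omega\Sigma X$ together with Theorem \ref{Hilton} applied to $\Omega\Sigma X\simeq\Omega(\bigvee_\beta Y_\beta)$ and Theorem \ref{finitetype}(1), yields $X_{Y_X}\simeq X$.

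The main obstacle will be showing that the least-connected irreducible factor is genuinely \emph{well-defined}: the unique decomposition theorem guarantees irreducible factors are unique up to order and homotopy, but a priori several irreducibles in the decomposition of $\Omega Y$ could share the same minimal connectivity. I expect to rule this out by combining the $H^\ast$-prime characterization (Lemma \ref{prime=irred}) with a dimension count of the lowest nonzero mod-$p$ homology, leveraging the irreducibility of $Y$ itself; this is the step that requires the most care and is where the passage from Gray's atomic setting to the irreducible $p$-local setting is most delicate.
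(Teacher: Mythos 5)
Your skeleton is essentially the paper's: define each direction by taking a least-connected irreducible factor (Theorem \ref{finitetype} applied to $\Omega Y$ and to $\Sigma X$), then verify the composites via Hilton--Milnor (Theorem \ref{Hilton}) together with primeness and connectivity (Lemmas \ref{prime=irred} and \ref{prime}). But there is a genuine gap at exactly the point you flag. In checking $Y_{X_Y}\simeq Y$ you write $\Omega Y\simeq X_Y\times Z$ with ``$Z$ of strictly greater connectivity'' and use $\mathrm{conn}(\Sigma Z)>\mathrm{conn}(Y)$; this presupposes that the minimal-connectivity irreducible factor of $\Omega Y$ occurs with multiplicity one, which is the unproved ``well-definedness'' claim you defer to your last paragraph. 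Without it, primeness only tells you that $Y$ retracts off $\Sigma X_Y$ \emph{or} off $\Sigma Z$ (connectivity kills only the $\Sigma(X_Y\wedge Z)$ term), and if $Z$ contains another minimal-connectivity irreducible the argument collapses. Your proposed repair --- ruling out repeated minimal factors by a dimension count on the lowest nonzero mod-$p$ homology via the $H^\ast$-prime characterization --- has no clear mechanism: irreducibility does not force the bottom homology of $Y$, nor of the bottom factor of $\Omega Y$, to be one-dimensional, and nothing in Lemma \ref{prime=irred} controls multiplicities. The paper never proves multiplicity one, and never needs it.

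The paper's fix is to reverse the direction of the retraction argument and to compare against a decomposition of $\Sigma\Omega Y$ whose higher pieces have connectivity controlled by homology, independently of how $\Omega Y$ decomposes. Given irreducible $Y$, pick \emph{any} least-connected irreducible factor $X$ of $\Omega Y$ and \emph{any} least-connected irreducible factor $Y'$ of $\Sigma X$. Since $\Sigma X$ is a wedge summand of $\Sigma\Omega Y\simeq \Sigma X\vee\Sigma R\vee\Sigma(X\wedge R)$, the space $Y'$ is a retract of $\Sigma\Omega Y\simeq Y\vee\bigvee_{n=2}^{\infty}[\Sigma\Omega Y]_n$ (Theorem \ref{suspension}), and the summands with $n\geq 2$ satisfy $\tilde H_\ast([\Sigma\Omega Y]_n)\cong\Sigma^{1-n}\tilde H_\ast(Y)^{\otimes n}$, hence have connectivity at least $n|Y|>|Y|$. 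Primeness then forces $Y'$ to retract off $Y$ itself, so $Y'\simeq Y$ by irreducibility of $Y$. Dually, any least-connected irreducible factor $X'$ of $\Omega Y$ is a retract of $\Omega\Sigma X$ (Hilton--Milnor applied to $\Sigma X\simeq Y\vee Q$), and because $X$ is an $H$-space one has $\Omega\Sigma X\simeq X\times\Omega\Sigma(X\wedge X)$ with the second factor of connectivity $2|X|+1$; hence $X'$ retracts off $X$ and $X'\simeq X$. In particular all least-connected irreducible factors on each side are homotopy equivalent to one another: well-definedness of your assignments is a \emph{corollary} of the retraction argument, not a prerequisite, and a possible multiplicity greater than one is harmless for a correspondence of homotopy types. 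Note also that your appeal to ``the unit $X\to\Omega\Sigma X$'' by itself gives no retraction; you need the $H$-structure on $X$ to extend the identity multiplicatively to $\Omega\Sigma X\to X$, which is precisely where the $H$-space hypothesis enters the paper's proof.
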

\begin{proof}
On the one hand, given any connected $p$-local irreducible $H$-space $X$ of finite type, we have a decomposition
$$\Sigma X\simeq Y\vee Q,$$
by Theorem \ref{finitetype}, where $Y$ is any irreducible factor such that $|Y|=|\Sigma X|$ (here we use $|\Sigma X|$ to denote the connectivity of $\Sigma X$). Then $Y$ is a simply connected co-$H$-space. Again by Theorem \ref{finitetype} there exists a decomposition
$$\Omega Y\simeq X^\prime \times R,$$
where $X^\prime$ is any irreducible factor such that $|X^\prime|=|\Omega Y|=|X|$. By Theorem \ref{Hilton}, we have a homotopy decomposition
$$\Omega \Sigma X \simeq \Omega (Y\vee Q)\simeq \Omega Y\times \Omega Q \times \prod_{\alpha\in \mathcal{I}-\{y_1, y_2\}}\Omega M((Y,\alpha_1), (Q,\alpha_2)).$$
Also, since $X$ is a $H$-space we have the usual decomposition
$$\Omega \Sigma X\simeq X\times \Omega \Sigma (X\wedge X).$$
Combining these decompositions, we see $X^\prime$ is a homotopy retract of $X\times \Omega \Sigma (X\wedge X)$. Since $|X^\prime|=|X|$, we have $X^\prime$ is a homotopy retract of $X$ by Lemma \ref{prime=irred} and \ref{prime}. Then $X\simeq X^\prime$ since $X$ is irreducible and we see that $X\simeq X^\prime$ is the only factor of $\Omega Y$ which has the minimal connectivity.

On the other hand, given any simply connected $p$-local irreducible co-$H$-space $Y$ of finite type, we have a decomposition
$$\Omega Y\simeq X \times R,$$
where $X$ is any irreducible factor such that $|X|=|\Omega Y|$, and then we have a decomposition of co-$H$-spaces
$$\Sigma X\simeq Y^\prime \vee Q$$
where $Y^\prime$ is any irreducible factor such that $|Y^\prime|=|\Sigma X|=|Y|$.
By Theorem \ref{suspension}, we have a homotopy decomposition
$$\Sigma \Omega Y \simeq Y\bigvee (\bigvee_{n=2}^{\infty}[\Sigma \Omega Y]_n)$$
with the property $\tilde{H}_\ast([\Sigma \Omega Y]_n) \cong \Sigma^{1-n}\tilde{H}_\ast(Y)^{\otimes n}$.
Also, we have the usual homotopy equivalence
$$\Sigma \Omega Y \simeq \Sigma (X \times R)\simeq \Sigma X\vee \Sigma R\vee \Sigma(X\wedge R).$$
Then combining the above decompositions, we see $Y^\prime$ is a homotopy retract of $Y\bigvee (\bigvee_{n=2}^{\infty}$ $[\Sigma \Omega Y]_n))$. Then again by Lemma \ref{prime=irred} and \ref{prime}, we have $Y^\prime$ is a homotopy retract of $Y$, and then $Y^\prime\simeq Y$ which is the only irreducible factor of $\Sigma X$ with the minimal connectivity.

The proof of theorem is completed by combining the above two parts.
\end{proof}

\begin{proposition}
Given a corresponding pair $(Y, X)$, we can choose structure maps $f$, $g$, $g^\prime$ and $h$ such that $g$ and $g^\prime$ are adjoint.
\end{proposition}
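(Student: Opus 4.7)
The strategy is to start from any initial pair data $(f_0, g_0, g'_0, h_0)$ provided by Definition~\ref{pair} (existence ensured by Theorem~\ref{correspondence}), keep $f := f_0$ and $g' := g'_0$, and redefine $g$ as the suspension-loop adjoint $g := \Omega g' \circ \eta_X$ of the chosen $g'$. The identity $g' \circ f \simeq \mathrm{id}_Y$ is then preserved automatically, so the only task is to produce a retraction $h : \Omega Y \to X$ with $h \circ g \simeq \mathrm{id}_X$ for this new, adjoint-compatible $g$.

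I will exhibit $h$ as a twist of $h_0$. The concrete claim is that the self-map $h_0 \circ g : X \to X$ is a homotopy equivalence; once this is established, setting $h := (h_0 \circ g)^{-1} \circ h_0$ tautologically gives $h \circ g \simeq \mathrm{id}_X$. Because $X$ is irreducible, Lemma~\ref{prime=irred} combined with Definition~\ref{Hprime} says every self-map of $X$ is either a homotopy equivalence or weakly nilpotent on mod-$p$ cohomology. So it suffices to check that $h_0 \circ g$ induces an isomorphism on the lowest nontrivial reduced homology group $\tilde{H}_k(X)$, where $k-1$ is the connectivity of $X$, as this rules out weak nilpotency on $H^k$.

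For the bottom-degree computation I will use the sharp form of Theorem~\ref{correspondence}: $X$ is the \emph{unique} minimal-connectivity irreducible factor of $\Omega Y \simeq X \times R$, so $|R| > |X|$ and hence $\tilde{H}_k(\Omega Y) \cong \tilde{H}_k(X)$ with $(g_0)_\ast$ an isomorphism; dually, $Y$ is the unique minimal-connectivity factor of $\Sigma X \simeq Y \vee Q$, so $|Q| > |Y|$ and $(g'_0)_\ast$ is an isomorphism on $\tilde{H}_{k+1}$. Naturality of the suspension-loop adjunction in the bottom degree gives the commutative square
\[
\begin{tikzcd}
\tilde{H}_{k+1}(\Sigma X) \ar{r}{(g'_0)_\ast} \ar{d}{\cong} & \tilde{H}_{k+1}(Y) \ar{d}{\cong} \\
\tilde{H}_k(X) \ar{r}{g_\ast} & \tilde{H}_k(\Omega Y)
\end{tikzcd}
\]
whose vertical maps are the suspension and loop isomorphisms at minimal connectivity; hence $g_\ast$ is also an isomorphism on $\tilde{H}_k$. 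Combined with the fact that $h_0 \circ g_0 \simeq \mathrm{id}_X$ forces $(h_0)_\ast$ to be an isomorphism on $\tilde{H}_k(\Omega Y)$, we conclude that $(h_0 \circ g)_\ast$ is an isomorphism on $\tilde{H}_k(X)$, as required.

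I expect the main subtlety to be precisely this bottom-degree comparison: one must extract from the proof of Theorem~\ref{correspondence} not merely the uniqueness of the decompositions but the sharper statement that no other irreducible summand of $\Omega Y$ (or of $\Sigma X$) reaches the minimal connectivity, so that the two manifestations of $g'_0$ on homology — once as a map on $\tilde{H}_{k+1}$ of a wedge, once as the homology of its loop adjoint on $\tilde{H}_k$ — can be legitimately identified on a single copy of $\tilde{H}_k(X)$.
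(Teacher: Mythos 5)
Your opening move coincides exactly with the paper's: the published proof consists precisely of setting $g$ equal to the adjoint composite $X\stackrel{E}{\rightarrow}\Omega\Sigma X\stackrel{\Omega g^\prime}{\rightarrow}\Omega Y$ and observing that the first part of the proof of Theorem \ref{correspondence} already supplies the retraction for this particular composite --- there, $X$ (as the minimal-connectivity factor $X^\prime$ of $\Omega Y$) is exhibited as a retract of $\Omega\Sigma X\simeq X\times \Omega\Sigma(X\wedge X)$ with the section factoring through $\Omega f$ and the retraction through $\Omega g^\prime$, and Lemma \ref{prime=irred} together with the \emph{canonical compositions} clause of Lemma \ref{prime} makes the induced self-map of $X$ a homotopy equivalence, so untwisting by its inverse produces $h$ with $h\circ\Omega g^\prime\circ E\simeq \mathrm{id}_X$. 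Your bottom-degree bookkeeping is also sound as far as it goes: the multiplicity-one statements you need ($\Omega Y\simeq X\times R$ with $|R|>|X|$, $\Sigma X\simeq Y\vee Q$ with $|Q|>|Y|$) are asserted at the end of the proof of Theorem \ref{correspondence}, the adjunction square commutes in the bottom degree via Hurewicz, and $h_0\circ g_0\simeq\mathrm{id}$ does force $(h_0)_\ast$ to be an isomorphism on $\tilde{H}_k(\Omega Y;\mathbb{Z}/p)$ since the mod-$p$ homology is degreewise finite-dimensional.

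The genuine gap is the step ``Because $X$ is irreducible, Lemma \ref{prime=irred} combined with Definition \ref{Hprime} says every self-map of $X$ is either a homotopy equivalence or weakly nilpotent.'' Lemma \ref{prime=irred} proves irreducible $\Leftrightarrow$ \emph{prime} in the retract sense of Definition \ref{prime2}; it does not prove irreducible $\Leftrightarrow$ $H^\ast$-\emph{prime} in the self-map-dichotomy sense of Definition \ref{Hprime}. Those two notions are identified only in Theorem \ref{finite}, under the hypothesis that $X$ is finite or has finitely many nontrivial homotopy groups --- exactly the hypothesis the paper is engineered to remove, and indeed the phantom-map phenomena cited in the proof of Lemma \ref{prime=irred} (Harper--Roitberg, Gray) are the obstruction to any such atomicity statement for general finite-type $p$-local spaces. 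Consequently, knowing that $(h_0\circ g)_\ast$ is an isomorphism on the single bottom group $\tilde{H}_k(X;\mathbb{Z}/p)$ does not let you conclude that $h_0\circ g$ is a homotopy equivalence: without the dichotomy you would need an isomorphism in all degrees, and even producing that from your data is not automatic. The repair is to abandon the cohomological dichotomy and argue by retracts, as the paper does: apply Lemmas \ref{prime=irred} and \ref{prime} to the retraction chain $X\rightarrow\Omega\Sigma X\rightarrow\Omega Y\rightarrow X$ built in the first part of the proof of Theorem \ref{correspondence}, where the canonical-compositions conclusion applies to precisely the composite $\Omega g^\prime\circ E$ you have chosen as $g$; this is, in substance, the paper's one-line proof.
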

\begin{proof}
From the first part of the proof of Theorem \ref{correspondence}, we can choose $g$ as
$$X\stackrel{E}{\rightarrow}\Omega\Sigma X \stackrel{\Omega g^\prime}{\rightarrow} \Omega Y,$$
which immediately implies the claim.
\end{proof}

For the proof of our other main theorem (Theorem \ref{pGray} below) in this section, we recall the following suspension splitting of smash products of looped co-$H$-spaces.
\begin{theorem}[(\cite{Grbic}, Theorem $4.5$)]\label{ksuspension}
Let $X$ be a simply connected $p$-local co-$H$-space of finite type. Then there is natural homotopy decomposition
\begin{equation}\label{split}
\Sigma (\Omega X)^{\wedge k}\simeq \bigvee_{N=k}^{\infty} \bigvee^{\rho(N, k)} [\Sigma \Omega X]_N,
\end{equation}
where
\[
\rho(N,k)=\sharp \big\{ (n_1, n_2, \ldots, n_k)~|~n_i\geq 1, \forall~ i; ~n_1+n_2+\ldots+n_k=N \big\},
\]
and $[\Sigma \Omega X]_N$ is a space with the property that
\[
\tilde{H}_\ast([\Sigma \Omega X]_N) \cong \Sigma^{1-N}\tilde{H}_\ast(X)^{\otimes N}.
\]
\end{theorem}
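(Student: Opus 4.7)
I would prove this by induction on $k$. The base case $k=1$ is immediate from Theorem~\ref{suspension} applied with $Z=S^{0}$ and with the coalgebra retract taken to be $A=T$ itself, so that $\bar{A}(X)=\Omega X$; this gives $\Sigma\Omega X\simeq \bigvee_{N\geq 1}[\Sigma\Omega X]_{N}$ with $\tilde{H}_{\ast}([\Sigma\Omega X]_{N})\cong \Sigma V^{\otimes N}$, where $V=\Sigma^{-1}\tilde{H}_{\ast}(X)$, matching the statement since $\rho(N,1)=1$.

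For the inductive step, I would use associativity of the smash product to rewrite $\Sigma(\Omega X)^{\wedge k}\simeq \Sigma(\Omega X)^{\wedge(k-1)}\wedge \Omega X$, apply the induction hypothesis to the first factor, and distribute the smash over the wedge to obtain
\[
\Sigma(\Omega X)^{\wedge k}\simeq \bigvee_{M\geq k-1}\bigvee^{\rho(M,k-1)}\bigl([\Sigma\Omega X]_{M}\wedge \Omega X\bigr).
\]
Each $[\Sigma\Omega X]_{M}$ is a wedge summand of the suspension $\Sigma\Omega X$, hence a connected $p$-local co-$H$-space of finite type and thus eligible to play the role of $Z$ in Theorem~\ref{suspension}. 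Applying that theorem once more with $Z=[\Sigma\Omega X]_{M}$ and $\bar{A}(X)=\Omega X$ gives $[\Sigma\Omega X]_{M}\wedge \Omega X \simeq \bigvee_{n\geq 1}\bigl[[\Sigma\Omega X]_{M}\wedge \Omega X\bigr]_{n}$, and the $n$-th piece has homology $\Sigma V^{\otimes M}\otimes V^{\otimes n}\cong \Sigma V^{\otimes(M+n)}$, so it qualifies as $[\Sigma\Omega X]_{M+n}$ in the sense of our theorem. Regrouping by $N=M+n$, the multiplicity of $[\Sigma\Omega X]_{N}$ on the right becomes $\sum_{n=1}^{N-k+1}\rho(N-n,k-1)$, which equals $\rho(N,k)$ via the bijection that appends $n$ as the final part to a composition of $N-n$ into $k-1$ positive parts. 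This finishes the induction.

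The main technical point, rather than any calculation, is keeping the decomposition natural in $X$: this is forced at each step by the naturality clause built into Theorem~\ref{suspension}, together with the standard fact that smash distributes over arbitrary wedges in the pointed CW category. A subsidiary point worth verifying is that the homology identification $\Sigma V^{\otimes M}\otimes V^{\otimes n}\cong \Sigma V^{\otimes(M+n)}$ really lets us label the resulting summand $[\Sigma\Omega X]_{M+n}$; this is automatic since in the statement the symbol $[\Sigma\Omega X]_{N}$ is defined solely by its homological property, so any space with that homology occurring in a natural splitting is entitled to the name.
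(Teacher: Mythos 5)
Note first that the paper does not prove this statement at all: it is imported verbatim from \cite{Grbic} (Theorem 4.5 there), so your proposal can only be compared with the cited source's argument. Your skeleton is the natural one and matches that route: induct on $k$, write $\Sigma(\Omega X)^{\wedge k}\simeq \Sigma(\Omega X)^{\wedge(k-1)}\wedge\Omega X$, distribute the smash over the wedge, reapply Theorem \ref{suspension} with $Z$ a summand, and count multiplicities via $\rho(N,k)=\sum_{n=1}^{N-k+1}\rho(N-n,k-1)$, which is correct. One small error: in the base case you must take $Z=S^1$, not $Z=S^0$. The space $S^0$ is not connected, so Theorem \ref{suspension} does not apply to it, and in any case $S^0\wedge\Omega X\simeq\Omega X$, not $\Sigma\Omega X$; with $Z=S^1$ the homology comes out as $\Sigma V^{\otimes N}=\Sigma^{1-N}\tilde H_\ast(X)^{\otimes N}$ as you claim.

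The genuine gap is your closing paragraph. You assert that any summand with homology $\Sigma^{1-N}\tilde H_\ast(X)^{\otimes N}$ is ``entitled to the name'' $[\Sigma\Omega X]_N$. But mod-$p$ homology does not determine homotopy type, and the content of the theorem --- already for a single fixed $k$ --- is that the $\rho(N,k)$ weight-$N$ summands are all homotopy equivalent to \emph{one} space $[\Sigma\Omega X]_N$, the same space for every $k$ (namely the $N$-th piece of the $k=1$ splitting). Your induction instead produces summands $\bigl[[\Sigma\Omega X]_M\wedge\Omega X\bigr]_n$ indexed by compositions $(n_1,\ldots,n_k)$ of $N$, and Theorem \ref{suspension} as quoted gives no identification between these for different compositions, nor with $[\Sigma\Omega X]_N$ itself. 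This identification is not cosmetic: the present paper uses it essentially in the proof of Theorem \ref{pGray}, where $W_\alpha$ is located inside a piece of $\Sigma(\Omega Y)^{\wedge 2}$ and then $\Sigma^{n-1}[\Sigma\Omega Y]_n\simeq Y^{\wedge n}$ is extracted from the splitting of $(\Sigma\Omega Y)^{\wedge n}$ --- an argument that only makes sense if the same functorial spaces occur across all $k$. To close the gap one must open up the construction behind Theorem \ref{suspension}: the pieces $[Z\wedge\bar A(Y)]_n$ are built as telescopes of natural idempotents realizing the tensor-length decomposition in homology, and one checks the coherence statement that for $Z=[\Sigma\Omega X]_M$ this yields a natural equivalence $\bigl[[\Sigma\Omega X]_M\wedge\Omega X\bigr]_n\simeq[\Sigma\Omega X]_{M+n}$ --- concretely, one produces a map between the two retracts inducing an isomorphism on mod-$p$ homology, which for simply connected $p$-local spaces of finite type is then a homotopy equivalence. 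That coherence is exactly where the work of \cite{Grbic} and \cite{Grbic2} lies; it cannot be bypassed by reading $[\Sigma\Omega X]_N$ as merely ``any space with the right homology''.
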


\begin{theorem}\label{pGray}
Given a corresponding pair $(Y, X)$, there exists a homotopy equivalence
$$\Omega Y\simeq X\times \Omega W,$$
where $W$ is homotopy retract of $\Sigma(X\wedge X)$ and hence a co-$H$-space. Write $W\simeq \bigvee W_\alpha$ with each  $W_\alpha$ irreducible. Then $W_\alpha$ is homotopy retract of $[\Sigma\Omega Y]_n$ for some $n\geq 2$. In particular,
$\Sigma^{n-1}W_\alpha$ is a homotopy retract of $Y^{\wedge n}$. Further, if $Y=\Sigma Z$, $W_\alpha$ is homotopy retract of $\Sigma Z^{\wedge n}$.
\end{theorem}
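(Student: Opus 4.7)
The plan is to combine the corresponding-pair structure with the loop-section property of $\Omega f$, the $H$-space James splitting of $\Omega\Sigma X$, the generalized Hilton-Milnor theorem (Theorem \ref{Hilton}), the $k$-fold suspension splitting (Theorem \ref{ksuspension}) and its $k=1$ specialization (Theorem \ref{suspension}), together with the unique decomposition theorem (Theorem \ref{finitetype}) and Lemma \ref{prime=irred}.

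To establish $\Omega Y \simeq X \times \Omega W$, I would first note that $\Omega g':\Omega\Sigma X\to \Omega Y$ is a loop map with loop section $\Omega f$, so its homotopy fibre is the loop space $\Omega F_0$ where $F_0 = \mathrm{hofib}(g')$, yielding $\Omega\Sigma X \simeq \Omega Y \times \Omega F_0$ as $H$-spaces. Combined with the $H$-space James splitting $\Omega\Sigma X \simeq X \times \Omega\Sigma(X\wedge X)$ and the unique-decomposition splitting $\Omega Y \simeq X \times R$ (from the irreducible $H$-retract $X$, via Lemma \ref{prime=irred} and Theorem \ref{finitetype}), cancellation produces
$$R \times \Omega F_0 \simeq \Omega\Sigma(X\wedge X).$$
Decomposing $\Sigma(X\wedge X) \simeq \bigvee_\beta U_\beta$ into irreducible co-$H$-summands, computing $\Omega\Sigma(X\wedge X)$ via the generalized Hilton-Milnor theorem, and invoking Gray's correspondence (Theorem \ref{correspondence}) to identify every irreducible $H$-space factor of $\Omega\Sigma(X\wedge X)$ with the loop-complement of its corresponding co-$H$-space partner, unique decomposition of both sides of the above equivalence allows me to take $W$ to be the wedge of those co-$H$-spaces corresponding to the irreducible factors of $R$. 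This gives $R \simeq \Omega W$ with $W$ a co-$H$-retract of $\Sigma(X\wedge X)$.

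Granted $\Omega Y \simeq X \times \Omega W$, Theorem \ref{finitetype} yields the decomposition $W \simeq \bigvee_\alpha W_\alpha$ into irreducible co-$H$-spaces. For statement 1), suspending gives $\Sigma\Omega Y \simeq \Sigma X \vee \Sigma\Omega W \vee \Sigma(X\wedge \Omega W)$; substituting $\Sigma X \simeq Y \vee W'$ (the co-$H$-splitting coming from $(f, g')$) and comparing with $\Sigma\Omega Y \simeq Y \vee \bigvee_{n \geq 2}[\Sigma\Omega Y]_n$ (Theorem \ref{suspension}), and cancelling $Y$, one finds that $\Sigma\Omega W$ is a retract of $\bigvee_{n \geq 2}[\Sigma\Omega Y]_n$. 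Since $W$ is the $n=1$ summand of its own James splitting $\Sigma\Omega W \simeq W \vee \bigvee_{n \geq 2}[\Sigma\Omega W]_n$, each irreducible $W_\alpha$ is a retract of $\bigvee_{n \geq 2}[\Sigma\Omega Y]_n$, and by Lemma \ref{prime=irred} together with Theorem \ref{finitetype} it is a retract of some single $[\Sigma\Omega Y]_n$ with $n \geq 2$. For 2), since $\tilde{H}_\ast(\Sigma^{n-1}[\Sigma\Omega Y]_n) \cong \tilde{H}_\ast(Y^{\wedge n})$, the appropriate James-Hopf construction realizes $\Sigma^{n-1}[\Sigma\Omega Y]_n$ as a retract of $Y^{\wedge n}$, so $\Sigma^{n-1}W_\alpha$ is a retract of $Y^{\wedge n}$. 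For 3), when $Y = \Sigma Z$ the classical James splitting $\Sigma\Omega\Sigma Z \simeq \bigvee_k \Sigma Z^{\wedge k}$ directly identifies $[\Sigma\Omega Y]_n \simeq \Sigma Z^{\wedge n}$, yielding $W_\alpha$ as a retract of $\Sigma Z^{\wedge n}$.

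The main obstacle is the second half of the first part: passing from the $H$-space factorization $R \times \Omega F_0 \simeq \Omega\Sigma(X\wedge X)$ to the concrete identification $R \simeq \Omega W$ with $W$ a genuine co-$H$-space retract of $\Sigma(X\wedge X)$. This requires a delicate coupling of Gray's correspondence with unique decomposition to ensure factorwise compatibility across the loop-suspension adjunction.
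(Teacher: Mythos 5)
Your reduction to $R\times \Omega F_0\simeq \Omega\Sigma(X\wedge X)$ is fine (looping $g'$ with section $\Omega f$, the James-type splitting $\Omega\Sigma X\simeq X\times\Omega\Sigma(X\wedge X)$, and cancelling the irreducible factor $X$ via Theorem \ref{finitetype}), but the step you yourself flag as the ``main obstacle'' is a genuine gap, and in fact a circularity, not a technicality to be smoothed over. Theorem \ref{correspondence} only says that each irreducible factor $R_i$ of $R$ is the \emph{least-connected} factor of $\Omega V_i$ for its corresponding co-$H$-space $V_i$; it does not say $R_i$ is itself a loop space, and it says nothing about the complementary factor of $\Omega V_i$ --- identifying that complement as the loop space of a co-$H$-space is precisely the content of the theorem you are trying to prove. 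Moreover, even granting a perfect factorwise matching, $\prod_i \Omega V_i$ is not $\Omega\bigl(\bigvee_i V_i\bigr)$: by Theorem \ref{Hilton} the latter contains all the Hilton--Milnor cross terms, so ``taking $W$ to be the wedge of the partners'' does not give $R\simeq \Omega W$, and there is no reason such a wedge would retract off $\Sigma(X\wedge X)$ itself (the $V_i$ a priori only retract off iterated smashes of summands of $\Sigma(X\wedge X)$ coming from the Hilton--Milnor pieces). No amount of unique decomposition repairs this, because the obstruction is about realizing $R$ as a loop space with control on the classifying co-$H$-space, not about matching factors.

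The paper avoids decomposition-matching entirely and argues geometrically, following Gray: it pulls back the Hopf-construction fibration sequence $\Omega\Sigma X\stackrel{\partial}{\to} X\stackrel{i}{\to}\Sigma(X\wedge X)\to \Sigma X$ along the structure maps $f\colon Y\to\Sigma X$ and $g'$, obtaining fibrations $X\to W\to Y$ and $X\to T\to \Sigma X$ with a map of the first into the second covering $f$, and composing back via $g'$ so that the composite $W\to T\to W$ is the identity. Gray's Propositions A1 and A2 apply because the relevant action map $X\times X\to X$ has first component $\partial'\circ E\simeq h\circ g\simeq \mathrm{id}$; this identifies $T\simeq \Sigma(X\wedge X)$ and shows the fibre inclusions are null homotopic, which \emph{simultaneously} exhibits $W$ as a retract of $\Sigma(X\wedge X)$ and splits $\Omega Y\simeq X\times\Omega W$. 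Granting this first part, the rest of your sketch is essentially sound and close to the paper's: the paper gets $W_\alpha$ as a retract of a single $[\Sigma\Omega Y]_n$ from $W$ being a retract of $\Sigma(X\wedge X)$, hence of $\Sigma(\Omega Y)^{\wedge 2}$, via Theorem \ref{ksuspension} with $k=2$ plus primeness (Lemma \ref{prime=irred}) and finite type, while your route through $\Sigma\Omega W$ also works; but in part 2) note that an abstract isomorphism $\tilde{H}_\ast(\Sigma^{n-1}[\Sigma\Omega Y]_n)\cong \tilde{H}_\ast(Y^{\wedge n})$ produces no map --- one needs the explicit composite $\Sigma^{n-1}[\Sigma\Omega Y]_n\to (\Sigma\Omega Y)^{\wedge n}\stackrel{\mathrm{ev}^{\wedge n}}{\longrightarrow} Y^{\wedge n}$, which is a homology isomorphism of simply connected spaces and hence an equivalence, rather than an unspecified ``appropriate James--Hopf construction.''
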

\begin{proof}
The proof is almost the same as that of Theorem $3.2$ in \cite{Gray} which will be sketched here only for the purpose of convincing the readers that we do not need the $p$-complete setting and atomic condition. We start by constructing a diagram obtained  by taking the pullback of the Hopf fibration with the structure maps:
\[
  \begin{tikzcd}
  \Omega Y  \ar{r}{\Omega f}  \ar{d}{}      & \Omega\Sigma X  \ar{r}{\Omega g^\prime}  \ar{d}[swap]{\partial^\prime}
 &\Omega Y \ar{r}{\Omega f}  \ar{d}[swap]{h}   & \Omega\Sigma X  \ar{d}[swap]{\partial }\\
X  \ar[equal]{r}  \ar{d}{}                               & X  \ar[equal]{r}  \ar{d}{i^\prime}
 &X  \ar[equal]{r}  \ar{d}[swap]{\iota^\prime}                            & X    \ar{d}[swap]{i}                               \\
 W    \ar{r} \ar{d}[swap]{\pi^\prime} \ar[bend right, dashed]{rr}{id}                    & T\ar{r}\ar{d}
 & W   \ar{r}  \ar{d}[swap]{\pi}                               & \Sigma (X\wedge X)\ar{d}\\
 Y  \ar{r}{f}   \ar[bend right, dashed]{rr}{id}                          &\Sigma X \ar{r}{g^\prime}
 &Y \ar{r}{f}                                                    &\Sigma X.\\
  \end{tikzcd}
\]
By Proposition $A1$ in the Appendix of \cite{Gray}, $T$ is determined by the action map
$$X\times X\rightarrow \Omega \Sigma X\times X \rightarrow X.$$
Since $\partial^\prime= h\circ \Omega g^\prime$, the first component of the above map is $\partial^\prime\circ E=
h\circ \Omega g^\prime \circ E\simeq h\circ g\simeq id$. Then by Proposition $A2$ in the Appendix of \cite{Gray}, $T\simeq \Sigma(X\wedge X)$ and $i^\prime$ is null homotopic. Hence $W$ is a homotopy retract of $\Sigma(X\wedge X)$ and the $\iota^\prime$ is null homotopic which implies the required splitting $\Omega Y\simeq X\times \Omega W$.

For the remaining part, notice that $W_\alpha$ is a homotopy retract of $\Sigma(\Omega Y)^{\wedge 2}$. Also by Theorem \ref{ksuspension}, we have a decomposition
\[
\Sigma (\Omega Y)^{\wedge 2}\simeq \bigvee_{n=2}^{\infty} \bigvee^{\rho(n, 2)} [\Sigma \Omega Y]_n.
\]
Since $W_\alpha$ is prime, it is a homotopy retract of some  $[\Sigma \Omega Y]_n$ with $n\geq 2$ by finite type condition. Applying Theorem \ref{ksuspension} again, we have
\[
(\Sigma\Omega Y)^{\wedge n}\simeq \bigvee_{N=n}^{\infty} \bigvee^{\rho(N, n)} \Sigma^{n-1}[\Sigma \Omega Y]_N.
\]
Then the composition of maps
$$\Sigma^{n-1}[\Sigma \Omega Y]_n\rightarrow (\Sigma\Omega Y)^{\wedge n}\stackrel{{\rm ev}^{\wedge n}}\rightarrow Y^{\wedge n}$$
induces isomorphism of homology groups. Hence $\Sigma^{n-1}[\Sigma \Omega Y]_n\simeq Y^{\wedge n}$ which implies $\Sigma^{n-1}W_\alpha$ is a homotopy retract of $Y^{\wedge n}$. The last assertion follows from $[\Sigma\Omega \Sigma Z]_n\simeq \Sigma Z^{\wedge n}$ and we have completed the proof.
\end{proof}

\section{Homotopy rigidity of $\Sigma \Omega$ on co-$H$-spaces of finite type}
\noindent In \cite{Grbic}, Grbi{\'c} and the second author have proved the homotopy rigidity of $\Sigma\Omega$ for finite $p$-local co-$H$-spaces.
\begin{theorem}[(\cite{Grbic}, Theorem $4.7$)]
Let $X$ and $Y$ be simply connected $p$-local finite dimensional co-$H$-spaces, suppose that $\Sigma\Omega X\simeq \Sigma\Omega Y$, then $X\simeq Y$.
\end{theorem}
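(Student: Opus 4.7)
The approach is to reduce the claim to a combinatorial matching of irreducible co-$H$-summands, by combining Wilkerson's theorem (Theorem \ref{finite}(2)), the James-type splitting of Theorem \ref{suspension}, and the finite-type unique decomposition (Theorem \ref{finitetype}(2)). First I would apply Wilkerson's theorem to write $X\simeq X_1\vee\cdots\vee X_k$ and $Y\simeq Y_1\vee\cdots\vee Y_\ell$ as wedges of irreducible finite co-$H$-spaces. Theorem \ref{suspension} then gives functorial splittings
\begin{equation*}
\Sigma\Omega X\simeq \bigvee_{i=1}^k X_i \vee \bigvee_{n\geq 2}[\Sigma\Omega X]_n, \qquad \tilde{H}_*([\Sigma\Omega X]_n)\cong \Sigma^{1-n}\tilde{H}_*(X)^{\otimes n},
\end{equation*}
and analogously for $Y$. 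Since $\Sigma\Omega X$ is a simply connected $p$-local co-$H$-space of finite type, Theorem \ref{finitetype}(2) supplies a unique irreducible decomposition up to order. Applied to $\Sigma\Omega X\simeq\Sigma\Omega Y$, this forces each $X_i$ to coincide, as an irreducible co-$H$-summand, with either some $Y_j$ or with an irreducible summand of some higher James layer $[\Sigma\Omega Y]_n$ with $n\geq 2$, and symmetrically for each $Y_j$.

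The technical heart of the proof is excluding the second alternative. A connectivity comparison of the lowest-dimensional homology on the two sides of $\Sigma\Omega X\simeq\Sigma\Omega Y$ shows that $X$ and $Y$ share a common connectivity $m\geq 1$. The homology formula for $[\Sigma\Omega Y]_n$ then implies the $n$-th layer is exactly $nm$-connected, so any irreducible retract of it has connectivity at least $nm$. Because each $X_i$ is finite dimensional we have $\mathrm{conn}(X_i)\leq\dim X_i\leq\dim X<\infty$, and therefore only the finitely many layer indices with $n\leq\dim X/m$ could possibly contain $X_i$. In particular, any $X_i$ with $\mathrm{conn}(X_i)<2m$ must match some $Y_j$ outright. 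I would then run an induction in successive connectivity slabs $[cm,(c+1)m)$: once the factors of $X$ and $Y$ with connectivity below $cm$ have been paired, the contributions to the wedge from layers $[\Sigma\Omega Z]_n$ with $n\leq c$ are determined by the already-matched portion of $\tilde{H}_*(X)\cong\tilde{H}_*(Y)$ via the formula, so uniqueness of decomposition (Theorem \ref{finitetype}(2)) forces the remaining irreducible factors in the next slab to coincide as well.

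The principal obstacle is making this inductive bookkeeping tight when finite irreducible factors of $X$ and higher James-layer summands both live in the same connectivity range, since the individual layer-pieces $[\Sigma\Omega Z]_n$ are not a priori determined up to homotopy by $\tilde{H}_*(Z)$ alone. The resolution is that Theorem \ref{finitetype}(2) is applied to $\Sigma\Omega X\simeq\Sigma\Omega Y$ globally, so it is the multiset of all irreducible summands (not of the individual layers) that must agree; combined with the homological identifications propagating upward through the induction, this is enough to force $\{X_i\}$ and $\{Y_j\}$ to coincide as multisets, yielding $X\simeq Y$.
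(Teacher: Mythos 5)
Your setup is sound and matches the first step of the paper's argument for the finite-type generalization (Theorem \ref{coHfinitetype}, of which the quoted statement is the special case cited from \cite{Grbic}): split $\Sigma\Omega X\simeq X\vee\bigvee_{n\geq 2}[\Sigma\Omega X]_n$, note that the layers with $n\geq 2$ are at least $2m$-connected when $X$ is $m$-connected, and conclude via Lemma \ref{prime=irred}, Lemma \ref{prime} and Theorem \ref{finitetype} that the lowest-connectivity irreducible factors of $X$ must be retracts of $Y$. But the inductive step has a genuine gap, and it sits exactly where you flagged it. You assert that, once the factors below connectivity $cm$ are paired, ``the contributions to the wedge from layers $[\Sigma\Omega Z]_n$ with $n\leq c$ are determined by the already-matched portion of $\tilde{H}_*(X)\cong\tilde{H}_*(Y)$ via the formula.'' The formula determines only the \emph{homology} of the layers, not their homotopy types, and irreducible wedge summands of $[\Sigma\Omega Y]_n$ are not determined by $\tilde{H}_*(Y)$; your own caveat concedes this, but the proposed resolution --- global multiset matching ``combined with the homological identifications propagating upward'' --- is not an argument. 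Pure multiset equality of all irreducible summands of $X\vee\bigvee_n[\Sigma\Omega X]_n$ and $Y\vee\bigvee_n[\Sigma\Omega Y]_n$ cannot by itself exclude a swap in which an unmatched factor of $X$ in the slab $[2m,3m)$ is absorbed as a summand of $[\Sigma\Omega Y]_2$ while a different factor of $Y$ is absorbed by $[\Sigma\Omega X]_2$, because each side's layers depend on the very space you are trying to identify.

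What closes this gap in the paper is a tool your proposal never invokes: the generalized Hilton--Milnor theorem (Theorem \ref{Hilton}). The paper peels off one lowest-connectivity irreducible factor at a time, writing $X\simeq Z_1\vee X_1$ and $Y\simeq Z_1\vee Y_1$, and then uses Theorem \ref{Hilton} to split the ``higher layer'' functor as $\mathcal{M}(Z_1\vee X_1)\simeq\mathcal{M}(Z_1)\vee\mathcal{M}(X_1)\vee\mathcal{M}(Z_1,X_1)$. The crucial point is that $\mathcal{M}(Z_1)$ is then \emph{literally the same space} on both sides --- not merely homologically equal --- so Theorem \ref{finitetype} cancels it, leaving $X_1\vee\mathcal{M}(X_1)\vee\mathcal{M}(Z_1,X_1)\simeq Y_1\vee\mathcal{M}(Y_1)\vee\mathcal{M}(Z_1,Y_1)$. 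The cross-terms are organized into multifunctors $\mathcal{M}_n((Z_i)_{i=1}^{n},-)$ that are ``reduced'' and ``splittable,'' and at each stage the next factor $Z_n$ to be matched has connectivity strictly below every $\mathcal{M}$-term, so it must be a retract of the other remainder. This functorial cancellation is precisely the mechanism that converts your ``determined by the matched portion'' heuristic into a correct statement; without it (or an equivalent device), the induction as you describe it does not go through. Your connectivity bound $n\leq\dim X/m$ from finite-dimensionality is correct but is not what rescues the argument, and indeed the paper's proof dispenses with it entirely, which is why it works in the finite-type setting.
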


In this section, we want to generalize their result to finite type case:
\begin{theorem}\label{coHfinitetype}
Let $X$ and $Y$ be simply connected $p$-local co-$H$-spaces of finite type, suppose that $\Sigma\Omega X\simeq \Sigma\Omega Y$, then $X\simeq Y$.
\end{theorem}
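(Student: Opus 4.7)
The plan is to show, by induction on connectivity, that the multisets of irreducible co-$H$-summands of $X$ and $Y$ coincide at every level; by the uniqueness part of Theorem \ref{finitetype}(2), this yields $X\simeq Y$. Write $\mathcal{I}_c(Z)$ for the multiset of irreducible summands of a simply connected $p$-local co-$H$-space $Z$ of finite type with connectivity exactly $c$; the finite-type hypothesis forces $\mathcal{I}_c(X)$ to be finite for each $c$ (each summand of connectivity $c$ contributes nontrivially to the finitely generated group $\tilde H_{c+1}(X)$). From $\Sigma\Omega X\simeq \Sigma\Omega Y$ we read off $k:=|X|=|Y|$.

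For the base case $c=k$, Theorem \ref{suspension} gives $\Sigma\Omega X\simeq X\vee\bigvee_{n\geq 2}[\Sigma\Omega X]_n$ with $|[\Sigma\Omega X]_n|=nk\geq 2k>k$; hence all irreducible summands of $\Sigma\Omega X$ at connectivity $k$ come from $X$, and uniqueness of decomposition applied to $\Sigma\Omega X\simeq \Sigma\Omega Y$ gives $\mathcal{I}_k(X)=\mathcal{I}_k(Y)$. For the inductive step, assume $\mathcal{I}_{c'}(X)=\mathcal{I}_{c'}(Y)$ for all $c'<c$, and set $X^{<c}:=\bigvee_{|X_\alpha|<c}X_\alpha$ (a finite wedge by finite-type); the inductive hypothesis gives $X^{<c}\simeq Y^{<c}$. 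Since $X^{<c}$ is a wedge summand of $X$, functoriality of Theorem \ref{suspension} makes $[\Sigma\Omega X^{<c}]_n$ a homotopy retract of $[\Sigma\Omega X]_n$. By Lemma \ref{prime=irred} and Theorem \ref{finitetype}(2), such a retract splits off as a wedge summand, so $[\Sigma\Omega X]_n\simeq [\Sigma\Omega X^{<c}]_n\vee R_n$. A direct computation with $\tilde H_*([\Sigma\Omega X]_n)=\Sigma^{1-n}\tilde H_*(X)^{\otimes n}$ and $\tilde H_*(X)=\tilde H_*(X^{<c})\oplus\tilde H_*(X^{\geq c})$ places the bottom degree of $\tilde H_*(R_n)$ at $\geq c+(n-1)k+1$, since the lowest contribution to $R_n$ requires one factor from $\tilde H_*(X^{\geq c})$ and $n-1$ factors from $\tilde H_*(X^{<c})$; hence $|R_n|\geq c+(n-1)k>c$ for $n\geq 2$ and $k\geq 1$, so $\mathcal{I}_c(R_n)=\emptyset$.

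Summing over James pieces,
$$\mathcal{I}_c(\Sigma\Omega X)=\mathcal{I}_c(X)\sqcup\bigsqcup_{n\geq 2}\mathcal{I}_c([\Sigma\Omega X^{<c}]_n)=\mathcal{I}_c(X)\sqcup\mathcal{I}_c(\Sigma\Omega X^{<c}),$$
where the second equality uses $\mathcal{I}_c(X^{<c})=\emptyset$ to reabsorb the $n=1$ James term of $\Sigma\Omega X^{<c}$. The same identity for $Y$, combined with $\Sigma\Omega X\simeq \Sigma\Omega Y$ and $\Sigma\Omega X^{<c}\simeq \Sigma\Omega Y^{<c}$ (from the inductive hypothesis), yields $\mathcal{I}_c(X)=\mathcal{I}_c(Y)$, closing the induction. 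The main technical subtlety is the passage from homotopy retract to wedge summand inside $[\Sigma\Omega X]_n$; this relies on Theorem \ref{finitetype}(2) together with Lemma \ref{prime=irred} (irreducible equals prime), and is the only non-routine step in the argument, since the connectivity bookkeeping for $R_n$ reduces to the explicit homology formula for the James pieces.
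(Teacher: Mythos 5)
Your proof is correct, but it is organized quite differently from the paper's. The paper runs a double induction that peels off one irreducible wedge summand $Z_n$ at a time from both $X$ and $Y$, and the price it pays is an escalating family of auxiliary multi-functors $\mathcal{M}_n((Z_i)_{i=1}^n,-)$, built from the generalized Hilton--Milnor theorem (Theorem \ref{Hilton}), whose ``reduced'' and ``splittable'' properties must be verified at every stage. You instead induct on the connectivity level $c$ and compare the finite multisets $\mathcal{I}_c$ directly, with the whole content concentrated in one connectivity estimate: in the splitting $[\Sigma\Omega X]_n\simeq[\Sigma\Omega X^{<c}]_n\vee R_n$, the complementary piece satisfies $|R_n|\geq c+(n-1)k>c$, so the level-$c$ content of $\bigvee_{n\geq2}[\Sigma\Omega X]_n$ depends only on $X^{<c}$, which by induction agrees with $Y^{<c}$; finite-multiset cancellation then closes the step. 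This buys a substantially cleaner argument that eliminates the paper's multi-functor bookkeeping, at the cost of invoking uniqueness of decomposition (Theorem \ref{finitetype}) for the infinite-wedge space $\Sigma\Omega X$ itself rather than only for finite stages --- which is legitimate, since $\Sigma\Omega X$ is again a simply connected $p$-local co-$H$-space of finite type and only finitely many James pieces contribute at each level. Two justifications deserve tightening. First, your ``retract splits off as a wedge summand'' step does not follow from Lemma \ref{prime=irred} and Theorem \ref{finitetype}(2) alone (uniqueness does not by itself give the multiplicity matching); the clean argument is the standard one: if $A$ is a retract of the simply connected co-$H$-space $B$, the co-$H$ structure yields a map $B\to A\vee\mathrm{cofib}(A\to B)$ inducing a homology isomorphism, hence an equivalence --- the same fact the paper uses implicitly when it passes from ``$Z_1$ is a homotopy retract of $Y$'' to ``$Y\simeq Z_1\vee Y_1$''. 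Second, the naturality in Theorem \ref{suspension}/\ref{ksuspension} is naturality for co-$H$ maps, so applying $[\Sigma\Omega(-)]_n$ to the retraction $X\rightleftarrows X^{<c}$ requires checking those maps respect the co-$H$ structures; if you prefer to avoid this, the identity $\mathcal{I}_c(\Sigma\Omega X)=\mathcal{I}_c(X)\sqcup\mathcal{I}_c(\Sigma\Omega X^{<c})$ can be derived instead from Theorem \ref{Hilton} applied to $X\simeq X^{<c}\vee X^{\geq c}$, where all cross terms and the higher James pieces of $\Sigma\Omega X^{\geq c}$ have connectivity $>c$ by the same homology count, after which your argument proceeds verbatim.
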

\begin{proof} We may prove the theorem by inductions on both the connectivity and numbers of the irreducible components with the same connectivity. First, by Theorem \ref{suspension} or \ref{ksuspension}, we have
$$\Sigma\Omega X \simeq X\bigvee \bigvee_{i=2}^{\infty} [ \Sigma\Omega X ]_n.$$
Since $\Sigma\Omega X\simeq \Sigma\Omega Y$, then we have
$$X\bigvee \bigvee_{i=2}^{\infty}[ \Sigma\Omega X ]_n\simeq Y\bigvee \bigvee_{i=2}^{\infty}[ \Sigma\Omega Y ]_n.$$
We define a homotopy functor $\mathcal{M}$ by $\mathcal{M}(X)\simeq \bigvee_{i=2}^{\infty} [ \Sigma\Omega X ]_n$. Then the above homotopy equivalence can be written as
\begin{equation}\label{coHfinitetype1}
X\vee \mathcal{M}(X)\simeq Y\vee\mathcal{M}(Y).
\end{equation}
Now we want to study the property of $\mathcal{M}$. By Theorem \ref{Hilton}, we have for any simply connected $p$-local co-$H$-spaces $W_1$ and $W_2$ of finite type
\begin{eqnarray*}
\Sigma \Omega (W_1\vee W_2)
&\simeq& \Sigma \big(  \Omega W_1\times \Omega W_2 \times \prod_{\alpha\in \mathcal{I}-\{y_1, y_2\}}\Omega M((W_1,\alpha_1), (W_2,\alpha_2))\big)\\
&\simeq& \Sigma \Omega W_1 \vee \Sigma \Omega W_2 \vee \mathcal{M}(W_1, W_2)\\
&\simeq& W_1\vee \mathcal{M}(W_1)\vee W_2\vee \mathcal{M}(W_2)\vee \mathcal{M}(W_1, W_2),\\
\end{eqnarray*}
where $\mathcal{M}(W_1, W_2)$ is a suitable homotopy bi-functor defined by the above deduction, and is said to be \textit{reduced with respect to $W_1$ and $W_2$} in the sense that

\textit{$\mathcal{M}(W_1, W_2)\simeq \ast$ if either $W_1$ or $W_2$ is homotopy contractible.}

On the other hand, we have
$$\Sigma \Omega (W_1\vee W_2)\simeq  W_1\vee W_2\vee \mathcal{M}(W_1\vee W_2).$$
Then by Theorem \ref{finitetype}, we have
\begin{equation}\label{coHfinitetype2}
\mathcal{M}(W_1\vee W_2)\simeq  \mathcal{M}(W_1)\vee \mathcal{M}(W_2)\vee \mathcal{M}(W_1, W_2).
\end{equation}
Another important property of the bi-functor $\mathcal{M}(-,-)$ is that it is \textit{splittable in both entries} in the sense of the following:

\textit{Given any three co-$H$-space $W_i$ for $i=1$, $2$, $3$ of mentioned type, we have
\begin{equation}\label{coHfinitetype3}
\mathcal{M}(W_1, W_2\vee W_3)\simeq \mathcal{M}(W_1, W_3)\vee \mathcal{M}(W_1,W_2; W_3),
\end{equation}
for some tri-functor $\mathcal{M}(-,-; -)$, and similar decomposition holds for $\mathcal{M}(W_1\vee W_2, W_3)$.}

Furthermore, $\mathcal{M}(W_1,W_2; W_3)$ is reduced and splittable with respect to $W_1$ and $W_2$.

Now return to our proof of the theorem. We may decompose $X$ as
$$X\simeq Z_1\vee X_1,$$
such that $Z_1$ is irreducible and $|Z_1|=|X|=|Y|<|\mathcal{M}(Y)|$. Then by Lemma \ref{prime=irred} and \ref{prime}, we see from (\ref{coHfinitetype1}) that $Z_1$ is a homotopy retract of $Y$ which implies
$$Y\simeq Z_1\vee Y_1,$$
Then
$$\Sigma \Omega X\simeq \Sigma \Omega (Z_1\vee X_1)\simeq Z_1\vee X_1\vee \mathcal{M}(Z_1)\vee \mathcal{M}(X_1)\vee \mathcal{M}(Z_1, X_1),$$
$$\Sigma \Omega Y\simeq \Sigma \Omega (Z_1\vee Y_1)\simeq Z_1\vee Y_1\vee \mathcal{M}(Z_1)\vee \mathcal{M}(Y_1)\vee \mathcal{M}(Z_1, Y_1).$$
By Theorem \ref{finitetype}, we see
\begin{equation}\label{coHfinitetype4}
X_1\vee \mathcal{M}(X_1)\vee \mathcal{M}(Z_1, X_1)\simeq Y_1\vee \mathcal{M}(Y_1)\vee \mathcal{M}(Z_1, Y_1),
\end{equation}
which implies $|X_1|=|Y_1|$. Suppose we have further decomposition
$$X_1\simeq Z_2\vee X_2,$$
such that $Z_2$ is irreducible and $|Z_2|=|X_1|=|Y_1|<|\mathcal{M}(Y_1)|<|\mathcal{M}(Z_1, Y_1)|$. Then we see $Z_2$ is a homotopy retract of $Y_1$ which implies
$$Y_1\simeq Z_2\vee Y_2.$$
Now (\ref{coHfinitetype4}) becomes
$$Z_2\vee X_2\vee \mathcal{M}(Z_2\vee X_2)\vee \mathcal{M}(Z_1, Z_2\vee X_2)\simeq Z_2\vee Y_2\vee \mathcal{M}(Z_2\vee Y_2)\vee \mathcal{M}(Z_1, Z_2\vee Y_2),$$
which with the help of (\ref{coHfinitetype2}) and (\ref{coHfinitetype3}) reduces to
\begin{equation}\label{coHfinitetype5}
 X_2\vee  \mathcal{M}(X_2)\vee \mathcal{M}(Z_2, X_2)
\vee \mathcal{M}(Z_1,X_2; Z_2)\simeq  Y_2\vee  \mathcal{M}(Y_2)\vee \mathcal{M}(Z_2, Y_2)
\vee  \mathcal{M}(Z_1,Y_2; Z_2).
\end{equation}
We may define a homotopy functor $\mathcal{M}_2$ by
$$\mathcal{M}_2((Z_1,Z_2), X_2)\simeq \mathcal{M}(Z_2, X_2) \vee \mathcal{M}(Z_1,X_2; Z_2).$$
On the one hand, since $\mathcal{M}(Z_2, X_2)$ is reduced and $\mathcal{M}(Z_1,X_2; Z_2)$ is reduced with respect to $Z_1$ and $X_2$, we see $\mathcal{M}_2((Z_1,Z_2), X_2)$ is reduced with respect to $(Z_1, Z_2)$ and $X_2$ (i.e., $\mathcal{M}_2((Z_1,Z_2), X_2)\simeq \ast$ if either $(Z_1, Z_2)\simeq (\ast,\ast )$ or $X_2\simeq \ast$). On the other hand, since both $\mathcal{M}(Z_2, X_2)$ and $\mathcal{M}(Z_1,X_2; Z_2)$ are splittable with respect to $X_2$, $\mathcal{M}_2((Z_1,Z_2), X_2)$ is splittable respect to $X_2$. To summarize, we have decompositions
$$X\simeq Z_1\vee Z_2\vee X_2,$$
\begin{equation}\label{coHfinitetype6}
Y\simeq Z_1\vee Z_2\vee Y_2,
\end{equation}
\[
X_2\vee  \mathcal{M}(X_2)\vee \mathcal{M}_2((Z_1,Z_2), X_2)\simeq  Y_2\vee  \mathcal{M}(Y_2)\vee\mathcal{M}_2((Z_1,Z_2), Y_2)
\]
such that $\mathcal{M}_2((W_1,W_2), W_3)$ is reduced w.r.t. $(W_1,W_2)$ and $W_3$, and is splittable w.r.t. $W_3$.

Now by induction, suppose we have decompositions
$$X\simeq Z_1\vee Z_2\vee \cdots\vee Z_{n-1}\vee X_{n-1},$$
\begin{equation}\label{coHfinitetype7}
Y\simeq Z_1\vee Z_2\vee \cdots\vee Z_{n-1}\vee Y_{n-1},
\end{equation}
\[
X_{n-1}\vee  \mathcal{M}(X_{n-1})\vee \mathcal{M}_{n-1}((Z_i)_{i=1}^{n-1}, X_{n-1})\simeq  Y_{n-1}\vee  \mathcal{M}(Y_{n-1})\vee \mathcal{M}_{n-1}((Z_i)_{i=1}^{n-1}, Y_{n-1})
\]
such that $|X|=|Z_1|\leq |Z_2|\leq \ldots \leq |Z_{n-1}|\leq |X_{n-1}|$, and also $\mathcal{M}_{n-1}((W_i)_{i=1}^{n-1}, W_n)$ is reduced w.r.t. $(W_i)_{i=1}^{n-1}$ and $W_n$, and is splittable w.r.t. $W_n$.
Now $X_{n-1}$ can be further decomposed as
$$X_{n-1}\simeq Z_n\vee X_n,$$
such that $X_n$ is irreducible and
$$|Z_n|=|X_{n-1}|=|Y_{n-1}|<|\mathcal{M}(Y_{n-1})|<|\mathcal{M}_{n-1}(((Z_i)_{i=1}^{n-1}, Y_{n-1})|.$$
Hence again we have
$$Y_{n-1}\simeq Z_n\vee Y_n,$$
and (\ref{coHfinitetype7}) reduces to
\begin{eqnarray*}
X_{n}\vee  \mathcal{M}(X_n)\vee \mathcal{M}(Z_n, X_n)\vee \mathcal{M}_{n-1}((Z_i)_{i=1}^{n-1}, Z_n\vee X_n)\\
\simeq Y_{n}\vee  \mathcal{M}(Y_n)\vee \mathcal{M}(Z_n, Y_n)\vee \mathcal{M}_{n-1}((Z_i)_{i=1}^{n-1}, Z_n\vee Y_n).
\end{eqnarray*}
Since $\mathcal{M}_{n-1}$ is splittable with respect to the last entry, we can write
\[
\mathcal{M}_{n-1}((Z_i)_{i=1}^{n-1}, Z_n\vee X_n)\simeq \mathcal{M}_{n-1}((Z_i)_{i=1}^{n-1}, Z_n)\vee \mathcal{M}_{n-1}((Z_i)_{i=1}^{n-1}, X_n; Z_n),
\]
define $\mathcal{M}_{n}$ by
\[
\mathcal{M}_{n}((Z_i)_{i=1}^{n}, X_n) \simeq \mathcal{M}(Z_n, X_n) \vee \mathcal{M}_{n-1}((Z_i)_{i=1}^{n-1}, X_n; Z_n),
\]
which is clearly reduced with respect to $(Z_i)_{i=1}^{n}$ and $X_n$.
Since all the $\mathcal{M}_{i}$'s and other involved functors are derived from the generalized Hilton-Milnor Theorem (Theorem \ref{Hilton}), we see that $\mathcal{M}_{n}$ is splittable with respect to the last entry (which can be deduced from the decomposition of a looped wedge of $n+2$ co-$H$-spaces. Roughly speaking, $\mathcal{M}_{n}((Z_i)_{i=1}^{n}, X_n)$ consists of the part of $\Sigma\Omega (\vee_{i=1}^{n}Z_i\vee X_n)$ ``containing'' $X_n$ and at least one of $Z_i$'s, which also justifies the chosen of the name ``reduced''). Then (\ref{coHfinitetype7}) can be further reduced to
\[
X_{n}\vee  \mathcal{M}(X_{n})\vee \mathcal{M}_{n}((Z_i)_{i=1}^{n}, X_{n})\simeq  Y_{n}\vee  \mathcal{M}(Y_{n})\vee \mathcal{M}_{n}((Z_i)_{i=1}^{n}, Y_{n}),
\]
which completes the induction step. Finally, we notice that the given spaces are of finite type, and then an induction argument on the connectivity will show that $X$ and $Y$ have the same irreducible components of any connectivity. By the unique decomposition theorem (Theorem \ref{finitetype}), $X\simeq Y$.
\end{proof}

The following corollary follows immediately.
\begin{corollary}
Let $X$ and $Y$ be simply connected $p$-local co-$H$-spaces of finite type, suppose that $\Omega X\simeq \Omega Y$, then $X\simeq Y$.
\end{corollary}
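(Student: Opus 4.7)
The plan is to deduce the corollary immediately from Theorem \ref{coHfinitetype} by applying the suspension functor. Given a homotopy equivalence $f\colon \Omega X \to \Omega Y$, the map $\Sigma f\colon \Sigma \Omega X \to \Sigma \Omega Y$ is also a homotopy equivalence, since $\Sigma$ is a homotopy functor. The hypotheses of the corollary (namely that $X$ and $Y$ are simply connected $p$-local co-$H$-spaces of finite type) match verbatim the hypotheses of Theorem \ref{coHfinitetype}, so that theorem applies to the pair $\Sigma \Omega X \simeq \Sigma \Omega Y$ and yields $X \simeq Y$.

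There is no real obstacle here: all the substantial work — the inductive decomposition by connectivity, the bookkeeping with the reduced and splittable bi-functors $\mathcal{M}(-,-)$ and $\mathcal{M}_n$, and the appeal to the generalized Hilton--Milnor theorem and the unique decomposition theorem — has already been carried out in the proof of Theorem \ref{coHfinitetype}. The only conceptual point worth flagging is that one is not using the full $H$-space equivalence of $\Omega X$ with $\Omega Y$, but only an ordinary (unbased, or pointed) homotopy equivalence of underlying spaces; this suffices because $\Sigma$ is applied purely as a homotopy functor, and Theorem \ref{coHfinitetype} itself requires only an unstructured homotopy equivalence between the suspensions.
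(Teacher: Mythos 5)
Your proposal is correct and is exactly the paper's intended argument: the paper states the corollary ``follows immediately'' from Theorem \ref{coHfinitetype}, the implicit step being precisely your observation that $\Omega X\simeq \Omega Y$ suspends to $\Sigma\Omega X\simeq \Sigma\Omega Y$. Your remark that only an unstructured homotopy equivalence is needed is accurate and harmless.
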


\section{Homotopy rigidity of $\Omega\Sigma$ and its functorial retract on $H$-spaces of finite type}
\noindent In this section we will study the homotopy rigidity problem for some good functorial retracts of $\Omega\Sigma$. First, we should clarify our meaning of good.
\begin{definition}\label{goodretract}
Given any functorial coalgebra decomposition
\[
T(V)\cong A(V)\otimes B(V),
\]
it is said to be \textit{good} if $A_1(V)\cong V$ and the natural morphism $B(V)\hookrightarrow T(V)$ is a functorial injection of Hopf algebras. Then the geometric realization of the above decomposition (the existence of which is ensured by \cite{Selick} and \cite{Selick2})
\[
\Omega\Sigma X\simeq A(X)\times B(X)
\]
 for any connected $p$-local space $X$ is called \textit{a good natural (or functorial) decomposition of} $\Omega \Sigma$, and $A$ is called  \textit{a functorial retract of} $\Omega\Sigma$ \textit{over identity}.
\end{definition}

\begin{remark}
There are examples of good functorial decompositions. Indeed, the functorial coalgebra decomposition 
\[T(V)\cong  A^{{\rm min}}(V) \otimes  B^{{\rm max}} (V)\] 
of Selick and the second author \cite{Selick,Selick1} is the motivation of the above definition. On the contrary, there are decompositions which are not good. In \cite{Selick}, Selick and the second author proved a \textit{functorial Poincar\'{e}-Birkhoff-Witt theorem} which claims that there is a functorial coalgebra isomorphism
\[T(V)\cong \bigotimes_{n=1}^{\infty}A^{{\rm min}}(V; L_n^{{\rm max}}),\]
where $A^{{\rm min}}(V; L_n^{{\rm max}})$ is the minimal functorial coalgebra retract of $T(V)$ over $L_n^{{\rm max}}(V)$, and $L_n^{{\rm max}}(V)$ is a certain natural submodule of $V^{\otimes n}$ and $L_1^{{\rm max}}(V)\cong V$. Hence in order to get a decomposition which is not good, we only need to choose the tensor of suitable terms $A^{{\rm min}}(V; L_n^{{\rm max}})$ $(n\geq 2)$ as $B(V)$ such that it is not a sub-Hopf algebra of $T(V)$, which should be true in the most cases. For instance, let $B(V)=A^{{\rm min}}(V; L_n^{{\rm max}})$ for some $n\geq 2$.
\end{remark}

Notice that here we only consider a special case of the functorial decomposition of loops on co-$H$-spaces in \cite{Selick2}. Under the condition of the above definition, we have
\[
B(V)\cong T(Q(V)),
\]
by Theorem $8.8$ of \cite{Selick}, where
$$Q(V)\cong \oplus_{n=2}^{\infty} Q_n(V)$$
is the set of decomposable elements of $B(V)$ and decomposed with respect to tensor length. Further, $Q_n(V)$ is a functorial retract of $L_n(V)$ which is the $n$-th homogeneous component of the free Lie algebra $L(V)$ generated by $V$. Then $Q_n(V)$ is $T_n$-projective and corresponds to a $\mathbb{Z}/p[\Sigma_n]$-projective submodule $Q_n$ of ${\rm Lie}(n)$ by Proposition $2.5$ of \cite{Wu} (for details, one can check subsection $2.2$ of \cite{Wu}), and also
\begin{equation}\label{ungradedtograd}
Q_n(V)\cong Q_n\otimes_{\mathbb{Z}/p[\Sigma_n]} V^{\otimes n}.
\end{equation}
Since $Q_n$ is a functorial retract of $T_n$, then it is the image of a functorial morphism
\[
f_V: V^{\otimes n}=T_n(V)\rightarrow T_n(V)=V^{\otimes n},
\]
and by Lemma $2.1$ of \cite{Selick}, $f_V\in \mathbb{Z}/p[\Sigma_n]$. Then we can define
\[
\tilde{f}_X: \Sigma X^{\wedge n}\rightarrow \Sigma X^{\wedge n},
\]
as the realization of $f_V$ such that $\mathbb{Z}_{(p)}[\Sigma_n]$ acts on $\Sigma X^{\wedge n}$ by permuting factors and using the comultiplication on $\Sigma X^{\wedge n}$. We then define
\[
Q_n(X)={\rm hocolim}_{\tilde{f}_X} \Sigma X^{\wedge n}
 \]
 for any $n\geq 2$. It turns out that $Q_n(X)$ is the functorial geometric realization of $Q_n(V)$ with $V\cong \tilde{H}_\ast(X)$, and is also a functorial homotopy retract of $\Sigma X^{\wedge n}$ (Lemma $2.2$ of \cite{Selick1}). Hence as in \cite{Selick1}, we choose a functorial cross-section $\theta _n: Q_n(X)\rightarrow \Sigma X^{\wedge n}$ for each $n$, and define the following composition of maps
 \[
 \phi: Q(X):=\bigvee_{n=2}^{\infty} Q_n(X)\stackrel{\vee \theta_n}{\longrightarrow}  \bigvee_{n=2}^{\infty} \Sigma X^{\wedge n} \stackrel{\vee \omega_n}{\longrightarrow} \bigvee\Sigma X\stackrel{\nabla}{\longrightarrow}\Sigma X,
 \]
 where $\nabla$ is the folding map and $\omega_n$ is the $n$-fold Whitehead product of identity map on $\Sigma X$ with itself. Then we get a functorial fibre sequence
\begin{equation}\label{fibreprime}
\Omega(Q(X))\rightarrow \Omega \Sigma X\rightarrow A^\prime (X)\rightarrow Q(X)\stackrel{\phi}{\longrightarrow}\Sigma X.
\end{equation}
Notice that the composition of natural maps
\[
\Omega(Q(X))\rightarrow \Omega \Sigma X\rightarrow B(X),
\]
induces an isomorphism on the submodule $ Q(V)\cong \Sigma^{-1}\tilde{H}_\ast(Q(X))$ of $\tilde{H}_\ast(B(X))\cong B(V)\cong T(Q(V))$, it is then a homotopy equivalence, i.e.,
\[
\Omega(Q(X))\simeq B(X).
\]
Then the first part of (\ref{fibreprime}) splits as
\[
\Omega \Sigma X\simeq \Omega(Q(X))\times A^\prime (X).
\]
Then by unique decomposition theorem (Theorem \ref{finitetype}), we see that $A^\prime(X)\simeq A(X)$ which is indeed a functorial homotopy equivalence. Hence (\ref{fibreprime}) can be chosen to be
\begin{equation}\label{fibre}
\Omega(Q(X))\rightarrow \Omega \Sigma X\rightarrow A (X)\rightarrow Q(X)\stackrel{\phi}{\longrightarrow}\Sigma X.
 \end{equation}

 We now want to study a special splittable property of $Q_n$. Suppose $X\simeq X_1\times X_2$, then
 \[
 \Sigma X\simeq \Sigma X_1\vee \Sigma X_2\vee \Sigma (X_1\wedge X_2),
 \]
 which implies
 \begin{eqnarray*}
 \Sigma X^{\wedge n}
 &\simeq&  \Sigma \big( X_1\vee X_2\vee  (X_1\wedge X_2)\big) \wedge X^{\wedge (n-1)}\\
 &\simeq&  \Sigma \big( X_1\vee X_2\vee  (X_1\wedge X_2)\big) ^{\wedge n}\\
 &\simeq&  \Sigma X_1^{\wedge n}\vee \Sigma X_2^{\wedge n}\vee \Sigma \mathcal{S}_n(X_1, X_2),
 \end{eqnarray*}
 where $\mathcal{S}_n(X_1, X_2)$ is a homotopy bi-functor defined by the above calculation. It is clear that $\Sigma\mathcal{S}_n$ is reduced and splittable  in both entries, i.e., $\Sigma\mathcal{S}_n(X_1, X_2)\simeq \ast$ if $X_1\simeq \ast$ or $X_2\simeq \ast$,
 and $\Sigma\mathcal{S}_n(X_1, X_2\times X_3)\simeq \Sigma\mathcal{S}_n(X_1, X_2)\vee \Sigma\mathcal{S}_n(X_1, X_3; X_2)$ for some tri-functor $\mathcal{S}_n(X_1, X_3; X_2)$ (it should be noticed that the operation involved in the definition of splittable may vary according to the context). Then the corresponding decomposition of the above one on the algebraic level should be
 \begin{equation}\label{tensordecom}
 \big(W_1\oplus W_2 \oplus (W_1\otimes W_2)\big)^{\otimes n} \cong  W_1^{\otimes n}\oplus W_1^{\otimes n} \oplus\mathcal{T}_n (W_1, W_2),
 \end{equation}
where $\mathcal{S}_n$ is the geometric realization of $\mathcal{T}_n$. Denote $V^{+}\cong \mathbb{Z}/p\oplus V$, $\widetilde{V}^{+}=V$ and $V^{+}\cong W_1^{+}\otimes W_2^{+}$, we see $V\cong W_1\oplus W_2 \oplus (W_1\otimes W_2)$. Now since $f_V\in \mathbb{Z}/p[\Sigma_n]$ and (\ref{tensordecom}) is a $\Sigma_n$-invariant decomposition,
we  see
\[
f_V\cong f_{W_1}\oplus f_{W_2}\oplus f_{W_1, W_2},
\]
where $f_{W_1, W_2}$ is some suitable functorial retraction of $\mathcal{T}_n (W_1, W_2)$, and then determine a functorial retract $\mathcal{Q}_n(W_1, W_2)$ of $\mathcal{T}_n (W_1, W_2)$. Hence there exists a functorial decomposition
\begin{equation}\label{decomQnV}
Q_n\big(V\cong (W_1^{+}\otimes W_2^{+})^{\sim}\big) \cong Q_n(W_1)\oplus Q_n(W_2)\oplus \mathcal{Q}_n(W_1, W_2),
\end{equation}
which can be explicitly described by (\ref{ungradedtograd}) (which is only stated for ungraded modules in \cite{Wu}, but it can be generalized to the graded case by using Lemma $3.2$ of \cite{Selick} in our situation). Then $\mathcal{Q}_n(W_1, W_2)$ is also reduced and splittable.

Returning to the geometric level, we see that there exists a functorial retraction
\[
Q_n(X_1)\vee Q_n(X_2)\rightarrow Q_n(X\simeq X_1\times X_2),
\]
the cofibre of which may be denoted by $\mathcal{Q}_n(X_1, X_2)$. Hence $\mathcal{Q}_n(X_1, X_2)$ is a geometric realization of $\mathcal{Q}_n(W_1, W_2)$ with $W_i\cong \tilde{H}(X_i)$ for $i=1$, $2$ and is reduced and splittable as a functorial retract of $\Sigma \mathcal{S}_n(X_1, X_2)$. Thus we have
\begin{eqnarray*}
B(X\simeq X_1\times X_2)
&\simeq& \Omega \big(\bigvee_{n=2}^{\infty}Q_n(X_1\times X_2)\big) \\
&\simeq& \Omega \big(\bigvee_{n=2}^{\infty}Q_n(X_1)\vee\bigvee_{n=2}^{\infty}Q_n(X_2) \vee \bigvee_{n=2}^{\infty}\mathcal{Q}_n(X_1, X_2)\big)\\
&\simeq& B(X_1)\times B(X_2)\times \mathcal{B}(X_1, X_2),
\end{eqnarray*}
where the last step is obtained by Theorem \ref{Hilton}, and $\mathcal{B}(X_1, X_2)$ is some suitable bi-functor which is also reduced and splittable. We also have
\begin{eqnarray*}
\Omega \Sigma (X\simeq X_1\times X_2)
&\simeq& \Omega (\Sigma X_1\vee \Sigma X_2\vee \Sigma (X_1\wedge X_2))\\
&\simeq& \Omega \Sigma X_1\times \Omega \Sigma X_2 \times \mathcal{J}(X_1, X_2)
\end{eqnarray*}
for some bi-functor $\mathcal{J}(X_1, X_2)$. It is noticed that $\mathcal{B}(X_1, X_2)$ is a functorial homotopy retract of $\mathcal{J}(X_1, X_2)$, and we have a natural decomposition
\begin{equation}\label{decompA}
A(X\simeq X_1\times X_2)\simeq A(X_1)\times A(X_2)\times \mathcal{A}(X_1, X_2),
\end{equation}
for some bi-functor $\mathcal{A}(X_1, X_2)$ which is a functorial homotopy retract of $\mathcal{J}(X_1, X_2)$ and is also reduced and splittable. Now we can prove the main theorem in this section.
\begin{theorem}\label{A}
Let $X$ and $Y$ be connected $p$-local $H$-spaces of finite type, and $A(X)$ be any good functorial homotopy retract of $\Omega\Sigma X$ over $X$. Then if $A(X)\simeq A(Y)$, we have $X\simeq Y$.
\end{theorem}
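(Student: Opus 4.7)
The plan is to dualize the argument of Theorem~\ref{coHfinitetype}, replacing the wedge decomposition $\Sigma\Omega X\simeq X\vee \mathcal{M}(X)$ by a product decomposition of $A(X)$ that splits off $X$ itself. First I would observe that, because $A$ is good and is a functorial retract of $\Omega\Sigma$ over identity, the composition
\[
X\stackrel{E}{\rightarrow}\Omega\Sigma X\rightarrow A(X)\rightarrow \Omega\Sigma X\stackrel{\mu}{\rightarrow}X,
\]
where $\mu$ is the $H$-space extension of $\mathrm{id}_X$ provided by the James construction, induces the identity on $\tilde{H}_\ast(-;\mathbb{Z}/p)$ by tracing through $V\subset A_1(V)\subset T(V)\twoheadrightarrow T_1(V)=V$; hence this composition is a $p$-local self-equivalence, so $X$ is a homotopy retract of $A(X)$. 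Applying Theorem~\ref{finitetype} to the $H$-space $A(X)$ then yields a splitting $A(X)\simeq X\times C(X)$, and a bottom-degree homology count using $A_1(V)\cong V$ shows $|C(X)|>|X|$.

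Next, writing $X\simeq Z_1\times\cdots\times Z_k$ and $Y\simeq W_1\times\cdots\times W_l$ with irreducible factors via Theorem~\ref{finitetype}, I would apply the natural decomposition (\ref{decompA}) iteratively to express $A(X)$ as a product of factors $A(Z_i)\simeq Z_i\times C(Z_i)$ together with cross terms built from the reduced and splittable bi-functor $\mathcal{A}(-,-)$, and similarly for $A(Y)$. Since $|\mathcal{A}(U_1,U_2)|\geq |U_1|+|U_2|+1>\max(|U_1|,|U_2|)$ and $|C(U)|>|U|$, the irreducible factors of $A(X)$ of minimum connectivity $c:=|X|$ are exactly the $Z_i$'s with $|Z_i|=c$, and the same holds for $A(Y)$ with the $W_j$'s. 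From $A(X)\simeq A(Y)$, Lemma~\ref{prime=irred} combined with Lemma~\ref{prime} then forces each such $Z_i$ to match some $W_j$ up to homotopy.

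After peeling off a common factor $Z_1$ by writing $X\simeq Z_1\times X_1$, $Y\simeq Z_1\times Y_1$ and cancelling the common summand $A(Z_1)$ via the unique decomposition theorem, the equation $A(X)\simeq A(Y)$ would reduce to
\[
A(X_1)\times \mathcal{A}(Z_1,X_1)\simeq A(Y_1)\times \mathcal{A}(Z_1,Y_1).
\]
An inner induction mimicking the one for $\mathcal{M}_n$ in Theorem~\ref{coHfinitetype} would then take over: at step $n$ I would define reduced and splittable multi-functors $\mathcal{A}_n((Z_i)_{i=1}^{n},\,\cdot\,)$ using the splittability of $\mathcal{A}_{n-1}$ in the last entry, peel off the next minimum-connectivity irreducible factor $Z_n$, and apply the connectivity estimates for $\mathcal{A}_n$ and $C(-)$ to force $Z_n$ to appear in $Y_n$ as well. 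An outer induction on connectivity, combined with the finite type assumption, produces matching irreducible decompositions of $X$ and $Y$, whence $X\simeq Y$ by Theorem~\ref{finitetype}.

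The hard part will be the bookkeeping required to confirm that the iterated multi-functors $\mathcal{A}_n$ remain reduced and splittable in their last entry at every stage, so that the inductive peeling stays consistent. This is the precise dual of the verification carried out for $\mathcal{M}_n$ in the proof of Theorem~\ref{coHfinitetype}, and I expect it to propagate along the recursion using only the reducedness and splittability of $\mathcal{A}$ already established in the discussion preceding the theorem.
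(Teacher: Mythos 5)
Your proposal runs on the same inductive engine as the paper's proof --- the decomposition (\ref{decompA}) with its reduced and splittable cross-functor $\mathcal{A}(-,-)$, the peeling of minimum-connectivity irreducible factors via Lemmas \ref{prime=irred} and \ref{prime}, cancellation of common factors through Theorem \ref{finitetype}, and an inner induction with multi-functors $\mathcal{A}_n$ copied from the $\mathcal{M}_n$ of Theorem \ref{coHfinitetype} --- but it genuinely differs at the step where $X$ is split off. The paper never decomposes $A(X)$ itself: it applies $\Omega\Sigma$ to the hypothesis $A(X)\simeq A(Y)$ and uses the suspension splitting $\Sigma A(X)\simeq \Sigma X\vee\bigvee_{n\geq 2}A_n(X)$ (Theorem \ref{suspension}, where goodness enters through $A_1(V)\cong V$), the generalized Hilton--Milnor theorem (Theorem \ref{Hilton}), and the James retraction $\Omega\Sigma X\to X$ for the $H$-space $X$, to obtain $\Omega\Sigma A(X)\simeq X\times \mathcal{J}_A(X)$ with $|\mathcal{J}_A(X)|>|X|$, and then runs the induction on the functor $\mathcal{J}_A$. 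Your route stays at the level of $A$ itself, which is more economical (it avoids the extra layer $\mathcal{E}_A$, $\mathcal{J}_A$); the price is that you must split the retract $X$ off the $H$-space $A(X)$ as an honest product factor, which is not formal --- a homotopy retract of an $H$-space is not automatically a product factor, and only irreducible retracts split off directly via Lemma \ref{prime=irred}; for general $X$ the splitting should be obtained, as your second paragraph in fact does, by iterating (\ref{decompA}) over the irreducible factors rather than from the bare retract.

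The one step that does not hold as written is your opening homology computation. The composite $X\stackrel{E}{\rightarrow}\Omega\Sigma X\rightarrow A(X)\rightarrow\Omega\Sigma X\stackrel{\mu}{\rightarrow}X$ does not induce the identity on $\tilde{H}_\ast(-;\mathbb{Z}/p)$ by tracing tensor length: writing $e=s_\ast r_\ast$ for the idempotent of $T(V)$ with image $A(V)$, one has $e(v)=v+w$ with $w$ of tensor length at least $2$, and $\mu_\ast$ does not annihilate $w$ --- on $V^{\otimes n}$ it is the $n$-fold Pontryagin product of $H_\ast(X)$, which lands in the same total degree. Thus your composite sends $v$ to $v$ plus decomposable terms; it is an isomorphism only in degrees at most $2|X|+1$, and since the map is not multiplicative, being the identity modulo decomposables does not force it to be an isomorphism in all degrees. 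Fortunately the statement you actually need is recoverable inside your own framework, and only for irreducible factors: if $Z$ is a nontrivial irreducible factor of $X$, then $Z$ is an $H$-space and a retract of $\Omega\Sigma Z\simeq A(Z)\times B(Z)$; by Lemmas \ref{prime=irred} and \ref{prime} it retracts off $A(Z)$ or $B(Z)$, and goodness (here via $B_1(V)=0$, since $B(V)\cong T(Q(V))$ with $Q(V)=\oplus_{n\geq 2}Q_n(V)$) gives $|B(Z)|\geq 2|Z|+1>|Z|$, ruling out $B(Z)$ because a retract is at least as connected as the ambient space. Hence $Z$ retracts off $A(Z)$ and, being irreducible, splits off: $A(Z)\simeq Z\times C(Z)$; iterating (\ref{decompA}) then yields $A(X)\simeq X\times C(X)$ with $|C(X)|>|X|$ as you want. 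With this patch, your bookkeeping for the multi-functors $\mathcal{A}_n$ proceeds exactly as you describe and parallels the paper's own reduction to the argument of Theorem \ref{coHfinitetype}.
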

\begin{proof} First by \ref{decompA}, we have a functorial decomposition
\[
\Sigma A(X_1\times X_2)\simeq \Sigma A(X_1)\vee \Sigma A(X_2)\vee \mathcal{E}_A(X_1, X_2),
\]
for some bi-functor $\mathcal{E}_A(X_1, X_2)$ which is reduced and splittable. By Theorem \ref{suspension}, we have
$$\Omega \Sigma A(X)\simeq \Omega\big(\Sigma X\vee \bigvee_{n=2}^{\infty} A_n(X)\big)\simeq X\times \mathcal{J}_A(X),$$
for some suitable functor $\mathcal{J}_A(X)$ such that $|\mathcal{J}_A(X)|>|X|$ since $X$ is an $H$-space. Then
\begin{eqnarray*}
\Omega\Sigma A(X_1\times X_2)
&\simeq& \Omega \big(\Sigma A(X_1)\vee \Sigma A(X_2)\vee \mathcal{E}_A(X_1, X_2)\big)\\
&\simeq& \Omega \Sigma A(X_1)\times\Omega \Sigma A(X_2)\times \mathcal{J}_A(X_1, X_2)\\
&\simeq& X_1\times \mathcal{J}_A(X_1)\times X_2\times \mathcal{J}_A(X_2)\times \mathcal{J}_A(X_1, X_2),
\end{eqnarray*}
for some reduced and splittable bi-functor $\mathcal{J}_A(X_1, X_2)$, which implies
\[
\mathcal{J}_A(X_1\times X_2)\simeq \mathcal{J}_A(X_1)\times \mathcal{J}_A(X_2)\times \mathcal{J}_A(X_1, X_2).
\]
Now suppose there exists a decomposition
\[
X\simeq Z_1\times X_1,
\]
such that $Z_1$ is irreducible and $|Z_1|=|X|=|Y|$. Since $\Omega\Sigma A(X)\simeq \Omega\Sigma A(Y)$ by assumption, we have
\[
Z_1\times X_1 \times\mathcal{J}_A(Z_1)\times \mathcal{J}_A(X_1)\times \mathcal{J}_A(Z_1, X_1)\simeq Y\times \mathcal{J}_\mathcal{A}(Y).
\]
Then $Z_1$ is a homotopy retract of $Y$ by connectivity and Lemma \ref{prime}. Hence there exists a decomposition
\[
Y\simeq Z_1\times Y_1,
\]
which implies
\[
X_1 \times \mathcal{J}_A(X_1)\times \mathcal{J}_A(Z_1, X_1)\simeq Y_1 \times \mathcal{J}_A(Y_1)\times \mathcal{J}_A(Z_1, Y_1).
\]
The theorem is then can be proved by a similar but dual argument to that of Theorem \ref{coHfinitetype}, by the observation that any multi-functor involved in the induction process will be splittable and reduced in a similar fashion to that in the proof of Theorem \ref{coHfinitetype} (for the decompositions are all $\Sigma_n$-invariant, and then similar types of decompositions hold for functorial retracts as we discussed before the proof of the theorem).
\end{proof}

For the special case when $A=\Omega \Sigma$, Theorem \ref{A} serves as the dual version of Theorem \ref{coHfinitetype} which also confirms the conjecture raised in \cite{Grbic}.

\begin{theorem}\label{Hfinitetype}
Let $X$ and $Y$ be connected $p$-local $H$-spaces of finite type, suppose that $\Omega \Sigma X\simeq \Omega \Sigma  Y$, then $X\simeq Y$.
\end{theorem}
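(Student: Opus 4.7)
The plan is to recognize Theorem \ref{Hfinitetype} as an immediate corollary of Theorem \ref{A}, specialized to the trivial good functorial decomposition of $\Omega\Sigma$. In other words, no new induction needs to be carried out: all the real work has already been packaged into Theorem \ref{A}, and the only task is to verify that the identity functor $\Omega\Sigma$ qualifies as a good functorial homotopy retract of itself.

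Concretely, consider the trivial coalgebra decomposition of the tensor Hopf algebra
\[
T(V) \cong T(V) \otimes \mathbb{Z}/p,
\]
with $A(V) = T(V)$ and $B(V) = \mathbb{Z}/p$. This is good in the sense of Definition \ref{goodretract}: first, $A_1(V) = T_1(V) = V$, and second, the inclusion $B(V) = \mathbb{Z}/p \hookrightarrow T(V)$ of the ground field is tautologically a functorial injection of Hopf algebras. Geometrically this realizes as the trivial splitting $\Omega\Sigma X \simeq \Omega\Sigma X \times \ast$, so $A = \Omega\Sigma$ is itself a good functorial retract of $\Omega\Sigma$ over the identity.

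With this choice, the hypothesis $A(X) \simeq A(Y)$ in Theorem \ref{A} is precisely $\Omega\Sigma X \simeq \Omega\Sigma Y$, and the conclusion $X \simeq Y$ is exactly what we want. Applying Theorem \ref{A} therefore completes the proof in one line.

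Since the induction on connectivity, the $\Sigma_n$-invariant splittings of $\mathcal{Q}_n(X_1, X_2)$ and the reduced/splittable bi-functors $\mathcal{J}_A(X_1, X_2)$ are all handled inside Theorem \ref{A}, there is no genuine obstacle at this stage; the only micro-check is that the degenerate decomposition above satisfies Definition \ref{goodretract}, which is immediate. This specialization also explains the remark that Theorem \ref{A} is the dual analog of Theorem \ref{coHfinitetype} and confirms the Conjecture of \cite{Grbic}, as observed in the discussion preceding the statement.
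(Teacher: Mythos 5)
Your proposal is correct and takes essentially the same route as the paper, which offers no separate proof of Theorem \ref{Hfinitetype} and instead derives it as the special case $A=\Omega\Sigma$ of Theorem \ref{A} (see the sentence immediately preceding its statement). Your explicit check that the trivial decomposition $T(V)\cong T(V)\otimes \mathbb{Z}/p$ satisfies Definition \ref{goodretract}, so that $A=\Omega\Sigma$ is a good functorial retract of itself, is a harmless verification the paper leaves implicit.
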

The following corollary then follows immediately.
\begin{corollary}
Let $X$ and $Y$ be connected $p$-local $H$-spaces of finite type, suppose that $\Sigma X\simeq \Sigma  Y$, then $X\simeq Y$.
\end{corollary}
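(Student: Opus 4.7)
The corollary is an immediate consequence of Theorem \ref{Hfinitetype}. My plan is to reduce the hypothesis on $\Sigma$ to the hypothesis on $\Omega\Sigma$ already handled, by applying the loop functor.

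More precisely, suppose $\Sigma X \simeq \Sigma Y$. Since $\Omega$ is a homotopy functor, applying it to any homotopy equivalence yields a homotopy equivalence, so we immediately obtain $\Omega \Sigma X \simeq \Omega \Sigma Y$. Both $X$ and $Y$ are connected $p$-local $H$-spaces of finite type by hypothesis, so Theorem \ref{Hfinitetype} applies directly and gives $X \simeq Y$.

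There is no real obstacle here; the content is entirely in Theorem \ref{Hfinitetype}, and the corollary simply records that passing from $\Sigma$ to $\Omega\Sigma$ is harmless because $\Omega$ preserves homotopy equivalences. The only thing to double-check is that the category of connected $p$-local $H$-spaces of finite type is preserved under the step we take: we never actually apply $\Omega$ or $\Sigma$ to $X$ or $Y$ in a way that might push us outside the category in which Theorem \ref{Hfinitetype} was proved, since we only feed $X$ and $Y$ themselves into that theorem after producing the equivalence $\Omega\Sigma X\simeq \Omega\Sigma Y$ from the ambient category $\mathcal{T}op$. Hence the proof is one line.
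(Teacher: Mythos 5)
Your proof is correct and matches the paper's (implicit) argument exactly: the paper states the corollary ``follows immediately'' from Theorem \ref{Hfinitetype}, the intended step being precisely yours, namely applying $\Omega$ to the equivalence $\Sigma X\simeq\Sigma Y$ to obtain $\Omega\Sigma X\simeq\Omega\Sigma Y$. Nothing further is needed.
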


\section{Appendix}
\noindent In this appendix we discuss the rigidity problem of some other canonical homotopy functors. First, for any simply connected $p$-local co-$H$-space $Y$ of finite type, we have self-wedge functor $\vee_n$ for each positive integer $n$ such that
$$\vee_n(Y)= \underbrace{Y\vee  \ldots \vee Y}_{n}.$$
And similarly for any connected $p$-local $H$-space $X$ of finite type, we have self-product functor $\times_m$ for each $m$ such that
$$\times_m(X)= \underbrace{X\times  \ldots \times X}_{m}.$$
The following proposition follows immediately from Theorem \ref{finitetype}.
\begin{proposition}
$\vee_n$ and $\times_m$ are homotopy rigid.
\end{proposition}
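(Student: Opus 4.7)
The plan is to deduce homotopy rigidity of $\vee_n$ and $\times_m$ directly from the unique decomposition theorem (Theorem \ref{finitetype}); the two statements are handled in parallel because they are formally dual.

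First I would invoke Theorem \ref{finitetype} to fix, for each input, an irreducible decomposition
$$Y_j \simeq \bigvee_i Y_{j,i}^{\vee a_{j,i}}, \qquad X_j \simeq \prod_i X_{j,i}^{\times b_{j,i}}$$
for $j=1,2$, where the $Y_{j,i}$ are pairwise non-equivalent irreducible co-$H$-spaces and the $X_{j,i}$ are pairwise non-equivalent irreducible $H$-spaces, with $a_{j,i}, b_{j,i}\geq 1$ the corresponding multiplicities. Here the finite type assumption on $Y_j$ and $X_j$ is what makes Theorem \ref{finitetype} directly applicable.

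Next, the $n$-fold self-wedge $\vee_n(Y_j)$ decomposes as a wedge of irreducibles in which $Y_{j,i}$ appears with multiplicity $n\,a_{j,i}$, and similarly $\times_m(X_j)$ is a product of irreducibles in which $X_{j,i}$ appears with multiplicity $m\,b_{j,i}$. Assuming $\vee_n(Y_1)\simeq \vee_n(Y_2)$ (respectively $\times_m(X_1)\simeq \times_m(X_2)$), the uniqueness part of Theorem \ref{finitetype} forces the multisets of irreducible factors on the two sides to coincide. Hence after a re-indexing we may assume $Y_{1,i}\simeq Y_{2,i}$ with $n\,a_{1,i}=n\,a_{2,i}$ for every $i$ (respectively $X_{1,i}\simeq X_{2,i}$ with $m\,b_{1,i}=m\,b_{2,i}$).

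Finally, cancelling the positive integer $n$ (respectively $m$) in the multiplicity identities gives $a_{1,i}=a_{2,i}$ (resp.\ $b_{1,i}=b_{2,i}$), so the irreducible decompositions of $Y_1$ and $Y_2$ (resp.\ $X_1$ and $X_2$) agree up to order, and a second application of Theorem \ref{finitetype} yields $Y_1\simeq Y_2$ (resp.\ $X_1\simeq X_2$). There is no genuine obstacle in this argument: the entire content resides in Theorem \ref{finitetype}, and the proposition reduces to the trivial algebraic fact that multiplication by the positive integer $n$ (or $m$) is injective on the set of multiplicities $\mathbb{Z}_{\geq 0}$.
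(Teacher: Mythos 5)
Your proof is correct and is essentially the paper's argument: the paper offers no written proof beyond the remark that the proposition ``follows immediately from Theorem \ref{finitetype}'', and your multiplicity-cancellation argument (decompose into irreducibles, note $\vee_n$ and $\times_m$ multiply each multiplicity by $n$ or $m$, invoke uniqueness, cancel) is exactly the intended immediate deduction. Note only that the finite type hypothesis is what guarantees each irreducible homotopy type occurs with \emph{finite} multiplicity, so that cancelling $n$ (resp.\ $m$) is legitimate; with infinite multiplicities the cancellation $n\,a = n\,a' \Rightarrow a = a'$ would fail.
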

We may also define self-smash functor $\wedge_n$ for any simply connected $p$-local co-$H$-space $Y$ of finite type by
$$\wedge_n(Y)= \underbrace{Y\wedge  \ldots \wedge Y}_{n}.$$
However, $\wedge_n$ is not homotopy rigid.
\begin{proposition}
There exist some finite simply connected $p$-local co-$H$-space $X$ and $Y$ such that $\wedge_n(X)\simeq \wedge_n(Y)$ but $X\not\simeq Y$.
\end{proposition}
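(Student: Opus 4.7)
The plan is to exhibit specific finite simply connected $p$-local co-$H$-spaces $X$ and $Y$ with $X\not\simeq Y$ but $X^{\wedge n}\simeq Y^{\wedge n}$ for some $n\geq 2$. The strategy exploits the fact that Whitehead products suspend to zero: if the attaching map of a $2$-cell co-$H$-space is a Whitehead product (such as $[\iota_m,\iota_m]$ for $m$ odd, $m\notin\{1,3,7\}$), then after any suspension it becomes null, and the iterated smash-power cells should split.

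Concretely, I would work at the prime $p=2$ and fix an odd integer $m\notin\{1,3,7\}$, for instance $m=5$. Take $X=J_2(S^m)=S^m\cup_{[\iota_m,\iota_m]}e^{2m}$, the second James filtration, and take $Y=S^m\vee S^{2m}$. Since $m$ is odd, the Hopf invariant of $[\iota_m,\iota_m]$ equals $1+(-1)^m=0$, so by James' theorem $[\iota_m,\iota_m]$ is a suspension; hence $X$ is a suspension and in particular a finite simply connected $2$-local co-$H$-space. To see $X\not\simeq_2 Y$, use the inclusion $X\hookrightarrow\Omega S^{m+1}$: the bottom class $u_m\in H^m(X;\mathbb{Z}/2)$ satisfies $\mathrm{Sq}^m(u_m)=u_m^2=u_{2m}\neq 0$, whereas $\mathrm{Sq}^m$ acts trivially on $H^*(Y;\mathbb{Z}/2)$.

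I would then compare smash powers. Both $X^{\wedge n}$ and $Y^{\wedge n}$ have the same cell decomposition, namely $\binom{n}{j}$ cells in dimension $(n+j)m$ for $j=0,1,\ldots,n$. All attaching maps in $X^{\wedge n}$ are built from external smashes of $[\iota_m,\iota_m]$ with identity maps on spheres, i.e.\ from iterated suspensions of $[\iota_m,\iota_m]$. Because Whitehead products suspend to zero, every attaching map of a cell of dimension below $2nm$ is null (these all fall in the stable range of the bottom cell $S^{nm}$, where stable triviality implies unstable triviality). Consequently the $(2nm-1)$-skeleton of $X^{\wedge n}$ splits as a wedge of spheres that matches the $(2nm-1)$-skeleton of $Y^{\wedge n}$.

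The hard part will be checking that the top-dimensional attaching class in $X^{\wedge n}$, which lies in $\pi_{2nm-1}$ of the already-split lower skeleton and sits precisely on the boundary of the stable range, is also null. Applying the generalized Hilton--Milnor theorem (Theorem \ref{Hilton}) to decompose this homotopy group into contributions from individual sphere summands $S^{(n+j)m}$ and from basic Whitehead products between them, one sees that every contribution is either stable (hence killed by the suspension-triviality of $[\iota_m,\iota_m]$) or is itself a higher iterated Whitehead bracket constructed from $[\iota_m,\iota_m]$, which again suspends to zero. Choosing $n$ (and possibly $m$) sufficiently large to push all relevant dimensions well inside the stable range of the respective sphere factors, one concludes that this top-cell obstruction vanishes, yielding $X^{\wedge n}\simeq Y^{\wedge n}$ while $X\not\simeq Y$ and proving that $\wedge_n$ is not homotopy rigid.
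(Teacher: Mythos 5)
Your choice of spaces is sound, but two of the steps you rely on are wrong, and one of them is the heart of the argument. First, the detection of $X\not\simeq Y$: you correctly note that the Hopf invariant of $[\iota_m,\iota_m]$ is $1+(-1)^m=0$ for $m$ odd (to show $X$ is a suspension), but the cup square of the bottom class in the cofibre of $f\colon S^{2m-1}\to S^m$ is precisely $H(f)\cdot u_{2m}$, so $\mathrm{Sq}^m(u_m)=u_m^2=0$ in $H^*(J_2(S^m);\mathbb{Z}/2)$ — your two claims contradict each other. In fact no Steenrod operation can distinguish $X$ from $Y$: since $\Sigma[\iota_m,\iota_m]=0$ you have $\Sigma X\simeq \Sigma Y$, and Steenrod operations commute with suspension, so $H^*(X)$ and $H^*(Y)$ are isomorphic as modules over the Steenrod algebra. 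The correct (and easy) argument is unstable: a two-cell complex $S^m\cup_f e^{2m}$ is homotopy equivalent to $S^m\vee S^{2m}$ if and only if $f\simeq \ast$, and $2$-locally $[\iota_m,\iota_m]\neq 0$ for odd $m\notin\{1,3,7\}$ by Adams. This part is repairable.

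The top-cell step, however, is a genuine gap that cannot be closed the way you propose. The top cell of $X^{\wedge n}$ is attached by a map $S^{2nm-1}\to$ an $(nm-1)$-connected wedge, and $2nm-1=2(nm)-1$ sits exactly at the metastable edge for \emph{every} $n$ and $m$ — the top dimension is always exactly twice the bottom one, so no choice of $n$ or $m$ pushes this obstruction into the stable range. At that edge the kernel of the suspension $\pi_{2nm-1}(S^{nm})\to \pi_{2nm}(S^{nm+1})$ is the image of $P$, generated by the Whitehead square $[\iota_{nm},\iota_{nm}]$, which is itself nonzero for $nm$ odd, $nm\notin\{1,3,7\}$; hence knowing the attaching map suspends to zero does not force it to vanish, and your final clause (``is itself a higher iterated Whitehead bracket \ldots which again suspends to zero'') is circular for exactly this reason. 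The paper sidesteps all obstruction theory by a suspension-coordinate shuffle, which works verbatim for your spaces: writing $[\iota_m,\iota_m]=\Sigma\beta$ (as you established), one has $X=\Sigma C_\beta$, so
$X\wedge X\simeq C_\beta\wedge \Sigma X\simeq C_\beta\wedge(S^{m+1}\vee S^{2m+1})\simeq \Sigma^{m+1}C_\beta\vee\Sigma^{2m+1}C_\beta$,
and since $\Sigma^2\beta=\Sigma[\iota_m,\iota_m]=0$, each summand splits into spheres, giving $X^{\wedge 2}\simeq S^{2m}\vee S^{3m}\vee S^{3m}\vee S^{4m}\simeq Y^{\wedge 2}$ (and inductively $X^{\wedge n}\simeq Y^{\wedge n}$). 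Your example is thus exactly an instance of the paper's construction — a map $\alpha=\beta$ with $\Sigma\alpha\not\simeq\ast$ but $\Sigma^2\alpha\simeq\ast$ — where the paper instead takes $\alpha=\eta_3\circ\nu_4$ at $p=2$ and $\alpha_1$-composites at odd primes, thereby also covering all primes, whereas your example is $2$-primary only.
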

\begin{proof}
Choose $X\simeq \Sigma (S^n\cup_\alpha S^m)$ and $Y\simeq S^{n+1}\vee S^{m+1}$ such that $\Sigma \alpha\not\simeq \ast$ but $\Sigma^{2}\alpha\simeq \ast$. Then we have
\begin{eqnarray*}
\wedge_2(X)\simeq X\wedge X
&\simeq& (\Sigma S^n\cup_\alpha S^m)\wedge (\Sigma S^n\cup_\alpha S^m)\\
&\simeq& (\Sigma^{2} S^n\cup_\alpha S^m)\wedge (S^n\cup_\alpha S^m)\\
&\simeq& \Sigma^{2}( S^{n}\vee S^{m})\wedge (S^n\cup_\alpha S^m)\\
&\simeq& ( S^{n}\vee S^{m})\wedge \Sigma^{2}( S^{n}\vee S^{m})\\
&\simeq& Y\wedge Y\simeq\wedge_2(Y).
\end{eqnarray*}
Of course, there exists such $\alpha$. For instance, for odd prime $p$, there exists
$$\alpha_1(n): S^{2n+2p-3}\rightarrow S^n$$
for $n\geq 3$ such that $\alpha_1(3)\circ \alpha_1(2p)$ and $\alpha_1(4)\circ \alpha_1(2p+1)$ are essential, but $\alpha_1(n)\circ \alpha_1(n+2p-3)$ is not for $n\geq 5$ \cite{Harper}. For $p=2$, there exist Hopf elements
$$\eta_2: S^3\rightarrow S^2, \ \ \ \quad \nu_4: S^7\rightarrow S^4,$$
and $\eta_n=\Sigma^{n-2} \eta_2$, $\nu_n=\Sigma^{n-4} \nu_4$. Then the compositions $\eta_3\circ \nu_4$ and $\eta_4\circ \nu_5$ are essential, but $\eta_n\circ \nu_{n+1}$ is not for $n\geq 5$ \cite{Toda}.
\end{proof}

There is also a semi-product operation $\ltimes$ defined by $X\ltimes Y = X\times Y /X\times \{\ast\}$. Therefore we may define a semi-product functor $X\ltimes$ for any simply connected $p$-local co-$H$-space $X$ of finite type such that
$$X\ltimes(Y)\simeq X\ltimes Y.$$
Then we have the following proposition.
\begin{proposition}
$X\ltimes$ is homotopy rigid as a functor from the category of simply connected $p$-local co-$H$-spaces of finite type to the category of spaces.
\end{proposition}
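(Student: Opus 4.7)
The plan is to reduce the statement to repeated applications of the unique decomposition theorem (Theorem \ref{finitetype}). The key starting observation is the natural splitting
\[
X\ltimes Y \;=\; X_+\wedge Y \;\simeq\; (S^0\vee X)\wedge Y \;\simeq\; Y\vee (X\wedge Y),
\]
which holds for any well-pointed $X$. Since $X$ is a simply connected $p$-local co-$H$-space of finite type, so is $X\wedge Y$, and the connectivity estimate $|X\wedge Y|=|X|+|Y|+1>|Y|$ holds (the contractible case $X\simeq \ast$ being trivial). Consequently the hypothesis $X\ltimes Y_1\simeq X\ltimes Y_2$ becomes
\[
Y_1\vee (X\wedge Y_1)\;\simeq\; Y_2\vee (X\wedge Y_2),
\]
which is an equivalence between simply connected $p$-local co-$H$-spaces of finite type, to which Theorem \ref{finitetype} applies.

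First I would compare connectivities to conclude $|Y_1|=|Y_2|=:n_0$. Because of the connectivity gap $|X\wedge Y_i|>|Y_i|$, the irreducible co-$H$-summands of connectivity exactly $n_0$ on each side can only come from $Y_i$; unique decomposition then forces their wedges $A_1$ and $A_2$ to be homotopy equivalent, say to a common $A$. Writing $Y_i\simeq A\vee Y_i'$, expanding $X\wedge (A\vee Y_i')\simeq (X\wedge A)\vee (X\wedge Y_i')$, and cancelling the common summands $A$ and $X\wedge A$ via unique decomposition, I obtain $X\ltimes Y_1'\simeq X\ltimes Y_2'$ with $|Y_i'|>n_0$.

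Iterating this step along an increasing sequence of connectivities $n_0<n_1<n_2<\cdots$, after $k$ stages all irreducible summands of $Y_1$ and $Y_2$ of connectivity strictly less than $n_k$ are identified. The finite type hypothesis is essential here: it guarantees that only finitely many irreducible summands of each $Y_i$ lie in any finite range of connectivities, so every cancellation involves only finitely many factors and the induction is well-posed. Passing to the limit, $Y_1$ and $Y_2$ have the same multiset of irreducible co-$H$-summands at every connectivity level, and a final invocation of Theorem \ref{finitetype} yields $Y_1\simeq Y_2$.

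The main obstacle I anticipate is packaging the infinite iteration cleanly: no individual cancellation step is deep, but one must use the finite type hypothesis to split the problem into connectivity-by-connectivity matching, so that the unique decomposition theorem is applied honestly at each stage rather than to an entire infinite wedge at once.
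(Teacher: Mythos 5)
Your proposal is correct and follows essentially the same route as the paper's proof: the splitting $X\ltimes Y\simeq Y\vee (X\wedge Y)$, the connectivity gap $|X\wedge Y|>|Y|$, and an induction that matches and cancels irreducible wedge summands via the unique decomposition theorem (Theorem \ref{finitetype}), the only cosmetic difference being that you cancel the entire layer of lowest-connectivity summands at once, whereas the paper peels off one irreducible factor per step. One caveat: the splitting $X\ltimes Y\simeq Y\vee(X\wedge Y)$ does \emph{not} hold for arbitrary well-pointed $X$ (your identification $X_+\simeq S^0\vee X$ fails as a pointed statement, and unstably the cofibration $Y\to X\ltimes Y\to X\wedge Y$ need not split); it is the co-$H$ structure present in this setting --- which is exactly how the paper justifies the step --- that makes it valid, so you should cite that hypothesis rather than well-pointedness.
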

\begin{proof}
We prove the proposition by inductions on both the connectivity and the number of irreducible components of the same connectivity. Given any $Y$ and $W$ of required type such that
$$X\ltimes Y\simeq X\ltimes W,$$
we want to prove $Y\simeq W$.
Since $X$ is a co-$H$-space, then we see $X\ltimes Y\simeq Y\vee (X\wedge Y)$ which implies
$$Y\vee (X\wedge Y)\simeq W\vee (X\wedge W).$$
If $Y$ is irreducible, then $Y$ is a homotopy retract of $W$ by Lemma \ref{prime=irred} and \ref{prime} which implies $Y\simeq W$. Now suppose we have
$$Y\simeq Z_{(1)}\vee Z_{(2)}\vee \ldots\vee Z_{(n-1)}\vee \widetilde{Y}_{(n)},$$
$$W\simeq Z_{(1)}\vee Z_{(2)}\vee \ldots\vee Z_{(n-1)}\vee \widetilde{W}_{(n)},$$
$$\widetilde{Y}_{(n)} \vee (X\wedge \widetilde{Y}_{(n)}) \simeq \widetilde{W}_{(n)}\vee (X\wedge \widetilde{W}_{(n)}),$$
where each $ Z_{(i)}$ is irreducible and $|Y|=|Z_{(1)}|\leq |Z_{(2)}|\leq \ldots\leq |Z_{(n-1)}|\leq |\widetilde{Y}_{(n)}|$. Make further decomposition as
$$\widetilde{Y}_{(n)}\simeq Z_{(n)}\vee \widetilde{Y}_{(n+1)},$$
such that $Z_{(n)}$ is irreducible and $|Z_{(n)}|=|\widetilde{Y}_{(n)}|=|\widetilde{W}_{(n)}|$. Then $Z_{(n)}$ is a homotopy retract of $\widetilde{W}_{(n)}\vee (X\wedge \widetilde{W}_{(n)})$, and then of $\widetilde{W}_{(n)}$ by Lemma \ref{prime}. Hence $\widetilde{W}_{(n)}\simeq Z_{(n)}\vee \widetilde{W}_{(n+1)}$, and we have
$$Y\simeq Z_{(1)}\vee Z_{(2)}\vee \ldots\vee Z_{(n-1)}\vee Z_{(n)}\vee \widetilde{Y}_{(n+1)},$$
$$W\simeq Z_{(1)}\vee Z_{(2)}\vee \ldots\vee Z_{(n-1)}\vee Z_{(n)}\vee \widetilde{W}_{(n+1)},$$
$$Z_{(n)}\vee \widetilde{Y}_{(n+1)} \vee (X\wedge Z_{(n)})\vee (X\wedge \widetilde{Y}_{(n+1)}) \simeq Z_{(n)}\vee \widetilde{W}_{(n+1)} \vee (X\wedge Z_{(n)})\vee (X\wedge \widetilde{W}_{(n+1)}),$$
where the last equivalence can be simplified to be
$$\widetilde{Y}_{(n+1)} \vee (X\wedge \widetilde{Y}_{(n+1)}) \simeq \widetilde{W}_{(n+1)} \vee (W\wedge \widetilde{W}_{(n+1)}).$$
Hence the induction step is completed, and then $Y$ and $W$ have the same irreducible components of any connectivity. By the unique decomposition theorem (Theorem \ref{finitetype}), $Y\simeq W$, and we have proved the proposition.
\end{proof}

To conclude the paper, let us consider the free loop functor $L$ which is the basic object in string topology. By definition, $L(X)={\rm Map}(S^1, X)$, the un-based mapping space. Also, we define a space $X$ of finite type to be \textit{homotopy finite} if $\pi_n(X)=0$ for all but finitely many $n$. Now for the rigidity problem of $L$, we have the following proposition.
\begin{proposition}
$L$ is homotopy rigid as a functor from the category of simply connected homotopy finite $p$-local $H$-spaces of finite type to the category of spaces.
\end{proposition}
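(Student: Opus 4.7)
The plan is to convert the hypothesis into a product equivalence via the standard splitting $L(X) \simeq X \times \Omega X$ available for any $H$-space $X$ (the evaluation fibration $\Omega X \to L(X) \to X$ is split by the constant-loop section, which is turned into a multiplicative equivalence $(x,\gamma) \mapsto x \cdot \gamma$ via the $H$-space structure), and then to run an induction on the top homotopical degree using the unique decomposition theorem. Under our hypotheses the equivalence $L(X_1) \simeq L(X_2)$ becomes
\[
X_1 \times \Omega X_1 \simeq X_2 \times \Omega X_2,
\]
and since each $X_i$ is simply connected and homotopy finite, so is each $\Omega X_i$, so both sides are connected $p$-local $H$-spaces of finite type to which Theorem \ref{finitetype} applies.

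Next I would introduce the top-degree invariant $N(Z) := \max\{n \mid \pi_n(Z)\neq 0\}$ for nonzero simply connected homotopy finite spaces, which satisfies $N(Z\times Z') = \max(N(Z),N(Z'))$ and $N(\Omega Z)= N(Z)-1$. Evaluating both sides of the displayed equivalence forces $N(X_1)=N(X_2)=:N$. I would then split $X_i \simeq A_i \times B_i$, where $A_i$ collects those irreducible $H$-factors of $X_i$ of top degree exactly $N$ and $B_i$ the remaining factors (so $N(B_i)<N$). Because every irreducible factor of $\Omega X_i$ has top degree $\leq N-1$, the irreducible $H$-factors of $L(X_i)$ with top degree equal to $N$ are precisely those of $A_i$; applying the uniqueness clause of Theorem \ref{finitetype} to $L(X_1)\simeq L(X_2)$ then forces $A_1 \simeq A_2 =: A$.

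Finally, $L(X_i)\simeq L(A)\times L(B_i)$ by the multiplicativity $L(U\times V) \simeq L(U)\times L(V)$, and the cancellation clause of Theorem \ref{finitetype} applied to $L(A)\times L(B_1)\simeq L(A)\times L(B_2)$ yields $L(B_1)\simeq L(B_2)$. Each $B_i$ is again a simply connected homotopy finite $p$-local $H$-space of finite type but with $N(B_i)<N$, so induction on $N$ (with trivial base $X_i\simeq \ast$ once $N\leq 0$) provides $B_1\simeq B_2$, and combined with $A_1\simeq A_2$ this delivers $X_1\simeq X_2$.

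I expect the main obstacle to be the clean separation of the factors coming from $X_i$ from the factors coming from $\Omega X_i$ inside the common product $L(X_i)$; this is exactly what the homotopy finite hypothesis buys through the strict drop $N(\Omega Z)<N(Z)$, and without it loop factors could a priori contribute to arbitrarily high homotopical degrees and the top-degree matching step would collapse.
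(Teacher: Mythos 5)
Your proof is correct, and while it starts from the same place as the paper's, it reorganizes the induction in a genuinely different way. Both arguments begin identically: split $L(X)\simeq X\times \Omega X$ via the constant-loop section and the $H$-structure, reduce the hypothesis to $X_1\times\Omega X_1\simeq X_2\times\Omega X_2$, and invoke the unique decomposition theorem (Theorem \ref{finitetype}). The paper then inducts on the \emph{number of irreducible $H$-factors} of $X_1$ (finite here, by homotopy finiteness plus finite type). Its base case, $X_1$ irreducible, must confront the dichotomy of Lemma \ref{prime}: $X_1$ is a retract of $X_2$ or of $\Omega X_2$, and the second branch is excluded by an iterated-looping argument — writing $\Omega X_2\simeq X_1\times W$ leads to $X_2$ being a retract of $\Omega^{2n}X_2$ for all $n$, whence $\pi_{i+2n}(X_2)\neq 0$ for all $n$, contradicting homotopy finiteness. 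Its inductive step then peels off \emph{one} irreducible factor $Z$ with $\mathrm{hodim}(Z)=\mathrm{hodim}(X_1)$ and places it inside $X_2$ rather than $\Omega X_2$ by precisely your top-degree mechanism $N(\Omega Z)=N(Z)-1$. You instead induct on the top degree $N$ itself and match \emph{all} top-degree-$N$ factors at once ($A_1\simeq A_2$), then cancel $L(A)$ using $L(U\times V)\simeq L(U)\times L(V)$. This buys a uniform argument that never needs the $\Omega^{2n}$-periodicity contradiction: even when $X_1$ is irreducible, your scheme gives $A_1=X_1\simeq A_2$ as a factor of $X_2$ and forces $B_2\simeq\ast$ from $L(B_2)\simeq\ast$ (as $B_2$ is a retract of $L(B_2)$), so homotopy finiteness enters only through the strict drop $N(\Omega Z)<N(Z)$, exactly as you observe. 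The paper's factor-by-factor scheme, on the other hand, is the template it reuses for the other rigidity results and isolates where each hypothesis bites.

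Two small points to tidy. First, $\Omega X_i$ is connected but not simply connected in general ($\pi_1(\Omega X_i)\cong\pi_2(X_i)$); this is harmless since Theorem \ref{finitetype}(1) only requires connectedness, but your "so is each $\Omega X_i$" should claim homotopy finiteness and finite type, not simple connectivity. Second, in the cancellation step $L(A)\times L(B_1)\simeq L(A)\times L(B_2)$ you should note explicitly that homotopy finiteness together with finite type guarantees each side has only \emph{finitely many} irreducible factors (each nontrivial simply connected factor contributes to one of finitely many finitely generated homotopy groups), so deleting the factor multiset of $L(A)$ from both sides is a legitimate application of the uniqueness clause.
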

\begin{proof}
Given any simply connected homotopy finite $p$-local $H$-spaces $X$ and $Y$ of finite type such that $L(X)\simeq L(Y)$, we want to prove $X\simeq Y$.
Recall that there is a canonical fibration
$$\Omega X \rightarrow L(X) \rightarrow X,$$
with a canonical cross-section $X\rightarrow L(X)$. Since $X$ is an $H$-space, we see $L(X)\simeq X\times \Omega X$. Hence,
\begin{equation}\label{App1}
X\times \Omega X\simeq  Y\times \Omega Y.
\end{equation}
Now we want to prove the proposition by induction on the number of irreducible components of $X$. First, suppose $X$ is irreducible as an $H$-space. Then $X$ is a homotopy retract of $Y$ or $\Omega Y$ by Lemma \ref{prime}. If $X$ is a retract of $Y$, then it is easy to see $X\simeq Y$. Otherwise, $X$ is a retract of $\Omega Y$ which implies $\Omega Y\simeq X\times W$ for some $H$-space $W$. Then (\ref{App1}) becomes
$$X\times \Omega X\simeq  Y\times X\times W,$$
which implies
$$\Omega X\simeq Y\times W$$
by Theorem \ref{finitetype}. Hence
$$\Omega^2 Y\simeq \Omega X\times \Omega W\simeq  Y\times W\times \Omega W.$$
Then $Y$ is a homotopy retract of $\Omega^2 Y$ and then of $\Omega^{2n} Y$ for any $n$. Notice that if $Y$ is homotopy retractible, there is nothing need to be proved. Hence, we may suppose $Y\not\simeq \ast$ and $\pi_i(Y)\not\cong 0$ for some $i$.
Then according to the above argument, we see $\pi_{i+2n}(Y)\not\cong 0$ for any $n$ which contradicts the homotopy finite assumption. Therefore $X$ can not be the homotopy retract of $\Omega Y$, and then $X\simeq Y$.

Now suppose by induction we have proved the proposition when the spaces involved can be decomposed into $n-1$ nontrivial irreducible $H$-spaces. Let $X$ be an $H$-space that can be decomposed into $n$ irreducible $H$-spaces. We can then write
$$X\simeq Z\times X^\prime,$$
such that $Z$ is an irreducible factor and ${\rm hodim}(X)={\rm hodim}(Z)$, where ${\rm hodim}(X)$ is the dimension of the top non-trivial homotopy group of $X$. Then (\ref{App1}) becomes
$$Z\times X^\prime\times \Omega Z\times \Omega X^\prime \simeq  Y\times \Omega Y.$$
Again by Lemma \ref{prime}, we see that $Z$ is a homotopy retract of $Y$. Hence $Y$ can be decomposed as
$$Y\simeq Z\times Y^\prime.$$
Then the above equivalence gives
$$X^\prime\times \Omega X^\prime \simeq Y^\prime\times \Omega Y^\prime$$
by either Theorem \ref{finite} or \ref{finitetype}.
Since $X^\prime$ can be decomposed into $n-1$ irreducible components, we have $X^\prime \simeq Y^\prime$ by induction and then $X\simeq Y$.
\end{proof}

\begin{acknowledgements}\label{ackref}
The authors would like to thank Haynes Miller for several valuable comments on our work. They also want to thank the referee most warmly for careful reading of our manuscript and numerous suggestions that have improved the exposition of this paper.

\end{acknowledgements}

\affiliationone{
   Ruizhi Huang\\
   Institute of Mathematics,
   Academy of\\
   Mathematics and Systems Science\\
   Chinese Academy of Sciences\\
   100190 Beijing
             \\
   China
   \email{huangrz@amss.ac.cn}}
\affiliationtwo{
   Jie Wu\\
   Department of Mathematics\\
   National University of Singapore\\
   119076 Singapore\\~
             \\
   Singapore
   \email{matwuj@nus.edu.sg}}
\end{document}